\DeclareMathOperator{\Ker}{Ker}
\DeclareMathOperator{\depth}{depth}
\DeclareMathOperator{\pd}{pd}
\DeclareMathOperator{\tr}{tr}
\DeclareMathOperator{\Spec}{Spec}
\DeclareMathOperator{\T}{\mathbb{T}}
\DeclareMathOperator{\eigensystems}{eigensystems}
\DeclareMathOperator{\Ann}{Ann}
\DeclareMathOperator{\Supp}{Supp}
\DeclareMathOperator{\red}{red}
\begin{document}
\newcommand{\tensor}[1]{%
  \mathbin{\mathop{\otimes}\limits_{#1}}%
}
\title{Deformation Theory of Galois Representations and the Taylor--Wiles Method}
\author{Ehsan Shahoseini \thanks{The author was supported by a grant from IPM} \\
    \footnotesize{School of Mathematics, Institute for Research in Fundamental Sciences (IPM), Niavaran Sq., Tehran, Iran, P.O. Box: 19395-5746}\\
    \footnotesize{\href{mailto:shahoseini@ipm.ir}{shahoseini@ipm.ir}}\\
    \footnotesize{\href{mailto:ehsanshahoseini69@gmail.com}{ehsanshahoseini69@gmail.com}}}
%\address{School of Mathematics, Institute for Research in Fundamental Sciences (IPM), P.O. Box: 19395-5746, Tehran, Iran}}

%\href{mailto:shahoseini@ipm.ir}{shahoseini@ipm.ir}
%\email{shahoseini@ipm.ir}
%\email{\href{shahoseini@ipm.ir}{shahoseini@ipm.ir}}%  and 
%\email{\href{ehsanshahoseini69@gmail.com}{ehsanshahoseini69@gmail.com}}
%
\thispagestyle{empty}

\maketitle
\extrachap{Notations and Conventions}

$R\llbracket X_1,X_2,\ldots,X_n\rrbracket$ \quad ring of power series in $n$-variables with coefficients in the ring $R$ \\
$M(1)$ \quad  the Tate twist of the module $M$ \\
$p$ \quad  a fixed prime number \\
$\ell$ \quad  a prime number \\
$\Z$ \quad  the ring of integers \\
$\Z_{\ell}$ \quad  the ring of $\ell$-adic integers \\
$\Q$ \quad  the field of rational numbers \\
$\Q_{\ell}$ \quad  the field of $\ell$-adic numbers \\
$\varepsilon_p$ \quad  the $p$-adic cyclotomic character \\
$\overline{\varepsilon}_p$ \quad  the mod-$p$ cyclotomic character \\
$\mathbb{F}$ \quad  a fixed finite field of characteristic $p$ \\
$W(\mathbb{F})$ \quad  the ring of Witt vectors of $\mathbb{F}$ \\
$F$ \quad  a local field, a finite extension of $\mathbb{Q}_{\ell}$ for some prime number $\ell$ \\
$K$ \quad  a number field \\
$S$ \quad  a finite set of the places of a given number field $K$ \\
$\overline{L}$ \quad  a fixed algebraic closure of the field $L$ \\
$K_S$ \quad  maximal algebraic extension of the number field $K$ (in a fixed $\overline{K}$) unramified outside $S$ \\
$K_{\mathfrak{p}}$ \quad  completion of the number field $K$ at the prime ideal $\mathfrak{p}$ \\
$G_K=\Gal(\overline{K}/K)$ \quad  absolute Galois group of $K$ \\
$G_{K,S}=\Gal(K_S/K)$ \quad  Galois group of $K_S$ over $K$ \\
$G_{\mathfrak{p}}=\Gal(\overline{K}_{\mathfrak{p}}/K_{\mathfrak{p}})$ \quad  decomposition group at $\mathfrak{p}$, absolute Galois group of $K_{\mathfrak{p}}$ \\
$I_{\mathfrak{p}}$ \quad inertia group at $\mathfrak{p}$ \\
$\Frob{\mathfrak{p}}$ \quad Frobenius element at $\mathfrak{p}$ \\
$\tr(M)$ \quad  trace of the matrix $M$ \\
$\det(M)$ \quad  determinant of the matrix $M$ \\

\tableofcontents
\chapter{Deformation Theory of Galois Representations and the Taylor--Wiles Method}

\vspace{-3 cm}
\noindent
{\large{Ehsan Shahoseini}}

\label{intro}

\vspace{1 cm}

\abstract{In this chapter, we want to have an overview of the Taylor--Wiles patching method. For this purpose, at the first we recall Mazur's theory of deforming Galois representations and study both local and global deformation problems. Then, we go through the subject of Taylor-Wiles primes and examine the role that they play on the Galois side and the modular (automorphic) side. At the end, we arrive at the Taylor-Wiles patching method and use it to prove $R=\mathbb{T}$ in both minimal and non-minimal cases. Note that, in the Galois side we will work with totally real number fields, but for the modular side we will concentrate on $\mathbb{Q}$ to avoid difficulties of working with Hilbert modular forms.}

\vspace{1 cm}

Some references for this chaper are \cite{Allen}, \cite{CSS}, \cite{DDT}, \cite{Gee}, \cite{Kisin}, \cite{Shin}, \cite{Taylor2}, \cite{TW}, \cite{Wiles}.

\section{Deformation Theory of Galois Representations}
\label{sec:1}

Main references for this section are \cite{Allen} and \cite{Mazur2}.

\subsection{Galois Representations}

Throughout this chapter, let $p$ be a \textit{fixed} prime number, $\mathbb{F}$ be a finite field of characteristic $p$, $\ell$ be a prime number, $K$ be a number field, $S$ be a finite set of places of $K$, and $K_S$ be the maximal algebraic extension of $K$ (in a fixed algebraic closure $\overline{K}$ of $K$) unramified outside $S$. Also, let $K_{\mathfrak{p}}$ be the completion of $K$ at the prime ideal $\mathfrak{p}$. Put $G_K=\Gal(\overline{K}/K)$, $G_{K,S}=\Gal(K_S/K)$, and $G_{\mathfrak{p}}=\Gal(\overline{K}_\mathfrak{p}/\overline{K})$. Note that all Galois groups are profinite groups.

\begin{remark}
The group $G_\mathfrak{p}$ is topologically finitely generated, so $G_K$ is topologically (countably) infinitely generated. Note that we do not know if $G_{K,S}$ is topologically finitely generated or not.
\end{remark}

For deformation theory of Galois representations and its applications, we impose a weaker condition than (topologically) finite-generation:

\begin{definition} \label{def: p-finiteness condition}
Let $G$ be a profinite group. For the prime number $p$, we say that $G$ satisfies the $p$-finiteness (or $\Phi_p$-finiteness) condition, if for all open subgroups $G_0$ of $G$ we have $|\Hom{\text{cont}}(G_0, \mathbb{Z}/p\mathbb{Z})|< \infty$.
\end{definition}

The advantage of working with representations of groups that satisfy the $p$-finiteness condition is that in their deformation theory, universal deformation rings (which will be defined later) are always Noetherian.

\begin{example}
The groups $G_{\mathfrak{p}}$ and $G_{K,S}$ satisfy the $p$-finiteness conditon, but $G_K$ does not.
\end{example}

We have the following fundamental short exact sequence:
\begin{equation} \label{eq: local galois group exact sequence}
\{1\} \rightarrow I_{\mathfrak{p}} \rightarrow G_{\mathfrak{p}} \rightarrow \Gal(\overline{\mathbb{F}}/\mathbb{F}) \simeq \hat{\mathbb{Z}} \rightarrow \{1\}
\end{equation}
where $I_{\mathfrak{p}}$ is called the inertia subgroup at $\mathfrak{p}$, $\mathbb{F}$ is the residue field of $K_\mathfrak{p}$ which is a finite field with $q$ elements and $\Gal(\overline{\mathbb{F}}/\mathbb{F})$ is (topologically) cyclic and generated by the Frobenius element $\Frob{\mathfrak{p}}$ which sends $x$ to $x^q$. Note that under the isomorphism $\Gal(\overline{\mathbb{F}}/\mathbb{F}) \simeq \hat{\mathbb{Z}}$, we assume that $\Frob{\mathfrak{p}}$ goes to $1$.

\begin{example} \label{ex: package G_K,S}
For each prime ideal $\mathfrak{p}$, we have a continuous group homomorphism $G_\mathfrak{p} \rightarrow G_K$ which depends on the choice of the embedding $K \hookrightarrow K_{\mathfrak{p}}$ and thus is well-defined only up to conjugation (by an element of $G_K$). So, we get a continuous group homomorphism $G_\mathfrak{p} \rightarrow G_{K,S}$ which is again well-defined up to conjugation (by an element of $G_{K,S}$). Now, let $\mathfrak{p} \notin S$. Then, the map $G_\mathfrak{p} \rightarrow G_{K,S}$ factors though $I_\mathfrak{p}$, i.e. we get
\begin{equation*}
G_\mathfrak{p}/I_\mathfrak{p} \simeq \Gal(\overline{\mathbb{F}}/\mathbb{F}) \simeq \hat{\mathbb{Z}} \rightarrow G_{K,S}.
\end{equation*}
The above map is well-defined up to conjugation, too. Hence, the image of $\Frob{\mathfrak{p}} \in \Gal(\overline{\mathbb{F}}/\mathbb{F})$ defines, not an element but, a conjugacy class in $G_{K,S}$ which we denote it again by $\Frob{\mathfrak{p}}$ and call it the $\mathfrak{p}$-Frobenius conjugacy class. So, for all $\mathfrak{p}$ not in $S$, we get the package
\begin{equation}
\{G_{K,S}; \{\Frob{\mathfrak{p}}\}_{\mathfrak{p} \notin S}\}.
\end{equation} 
One of the main goals of algebraic number theory is the study of, not only $G_{K,S}$ but also, the whole above package.
\end{example}

\begin{remark}
The abelianization of the above package, i.e. $\{G^{ab}_{K,S}; \Frob{\mathfrak{p}}, \mathfrak{p} \notin S\}$ is well understood by class field theory. Note that since $G^{ab}_{K,S}$ is abelian, $\Frob{\mathfrak{p}}$ is an element of $G^{ab}_{K,S}$.
\end{remark}

Since the package $\{G_{K,S}; \{\Frob{\mathfrak{p}}\}_{\mathfrak{p} \notin S}\}$ is well-defined only up to conjugation, it is not possible to study it canonically. But, there is an approach; the Tannakian approach:
\begin{center}
Try to understand not the group itself, but its representations $G_{K,S} \rightarrow GL_n$.
\end{center}

But, $GL_n$ of what?

As $G_{K,S}$ is profinite, we like that $GL_n(-)$ be profinite, too.

\begin{definition} \label{def: coefficient ring}
For a fixed prime number $p$ and a fixed finite field $\mathbb{F}$ of characteristic $p$, by a coefficient ring, we mean a complete Noetherian local ring $A$ with residue field $\mathbb{F}$ (for a local ring $A$, we denote its unique maximal ideal by $\mathfrak{m}_A$). We denote the category of coefficient rings with fixed residue field $\mathbb{F}$ by \textbf{CNL}. A homomorphism in \textbf{CNL} is a continuous local homomorphism which is compatible with the induced isomorphism on the residue fields. Let \textbf{Art} be the full subcategory of the \textit{Artinian} objects of \textbf{CNL}. For a given $\Lambda \in \textbf{CNL}$, We let $\textbf{CNL}_{\Lambda}$ be the full subcategory of \textbf{CNL} of $\Lambda$-algebras and $\textbf{Art}_{\Lambda}$ be the full subcategory of Artinian objects of $\textbf{CNL}_{\Lambda}$.
\end{definition}

Note that for a coefficient ring $A$, $A$ and hence $GL_n(A)$ are profinite.

\begin{remark}
Let $W(\mathbb{F})$ be the ring of \textit{Witt vectors} of $\mathbb{F}$, i.e. the ring of integers of the unique unramified extension of $\mathbb{Q}_p$ with residue field $\mathbb{F}$. Then, for $A \in \textbf{CNL}$, we have a \textbf{CNL}-morphism $W(\mathbb{F}) \rightarrow A$, and in fact \textbf{CNL}=$\textbf{CNL}_{W(\mathbb{F})}$ and \textbf{Art}=$\textbf{Art}_{W(\mathbb{F})}$.
\end{remark}

Note that a $\Lambda$-algebra coefficient ring can be written as a quotient of $\Lambda\llbracket X_1,\cdots,X_n \rrbracket$, for some $n$.

\begin{definition} \label{def: residual representation}
Let $A \in \textbf{CNL}$ and let $\rho: G_{K,S} \rightarrow GL_n(A)$ be a representation. The reduction map $\pi: A \rightarrow A/\mathfrak{m}_A \simeq \mathbb{F}$ induces a reduction map $GL_n(A) \rightarrow GL_n(\mathbb{F})$ which we denote it again by $\pi$. We call $\overline{\rho}:=\pi \circ \rho$ the residual representation attached to $\rho$:

\begin{center}
\begin{tikzcd}
G_{K,S} \arrow[rd, "\overline{\rho}"] \arrow[r, "\rho"] & GL_n(A) \arrow[d, "\pi"] \\ 
& GL_n(\mathbb{F})
\end{tikzcd}.
\end{center}
\end{definition}

The following proposition shows that residually absolutely irreducible representations $\rho$ are determined, up to conjugation, via the trace of $\rho$:

\begin{proposition} \label{prop: residually irreducible representations}
Let $\rho:G \rightarrow GL_n(A)$ is a residually absolutely irreducible representations and $\rho^{\prime}:G \rightarrow GL_n(A)$ is another representation. If for all $h \in G$ we have $\tr(\rho(h))=\tr(\rho^{\prime}(h))$, then $\rho=g\rho^{\prime}g^{-1}$ for some $g \in GL_n(A)$.
\end{proposition}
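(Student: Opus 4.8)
\emph{Proof proposal.} The plan is to reduce to the case where $A$ is Artinian, use the absolute irreducibility of $\overline{\rho}$ to see that the images of both $\rho$ and $\rho'$ span the full matrix algebra, transport linear relations from $\rho$ to $\rho'$ via the trace pairing to build an algebra automorphism of $M_n(A)$, conclude by Skolem--Noether that it is inner, and finally pass to a general $A$ by an inverse limit. To begin, I would isolate what absolute irreducibility gives. By Burnside's theorem, $\overline{\rho}$ being absolutely irreducible is equivalent to $\mathbb{F}[\overline{\rho}(G)]=M_n(\mathbb{F})$, so one can pick $g_1,\dots,g_{n^2}\in G$ with $\{\overline{\rho}(g_i)\}$ an $\mathbb{F}$-basis of $M_n(\mathbb{F})$. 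Since the trace pairing $(x,y)\mapsto\tr(xy)$ on $M_n(A)$ is perfect (the elementary matrices $E_{ij}$ form a basis whose dual basis is $\{E_{ji}\}$), the Gram matrix $\big(\tr(\overline{\rho}(g_ig_j))\big)_{i,j}$ is invertible over $\mathbb{F}$. But this matrix depends only on the function $g\mapsto\tr\overline{\rho}(g)=\tr\overline{\rho}'(g)$, so $\big(\tr(\overline{\rho}'(g_ig_j))\big)_{i,j}$ is invertible as well, which forces $\{\overline{\rho}'(g_i)\}$ to be linearly independent and hence also an $\mathbb{F}$-basis of $M_n(\mathbb{F})$.

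Now suppose $A$ is Artinian local. Since $M_n(A)$ is free of rank $n^2$ and $\{\rho(g_i)\}$, $\{\rho'(g_i)\}$ reduce modulo $\mathfrak{m}_A$ to bases, Nakayama's lemma shows each family is an $A$-basis of $M_n(A)$; in particular $A[\rho(G)]=A[\rho'(G)]=M_n(A)$. Let $\phi\colon M_n(A)\to M_n(A)$ be the $A$-linear bijection with $\phi(\rho(g_i))=\rho'(g_i)$. The central claim is that $\phi(\rho(g))=\rho'(g)$ for all $g\in G$: writing $\rho(g)=\sum_i a_i\rho(g_i)$, one gets for each $j$ that $\tr\big((\rho'(g)-\textstyle\sum_i a_i\rho'(g_i))\,\rho'(g_j)\big)=\tr\rho'(gg_j)-\sum_i a_i\tr\rho'(g_ig_j)=\tr\rho(gg_j)-\sum_i a_i\tr\rho(g_ig_j)=0$, and perfectness of the trace pairing against the basis $\{\rho'(g_j)\}$ yields $\rho'(g)=\sum_i a_i\rho'(g_i)=\phi(\rho(g))$. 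It then follows formally that $\phi$ is a unital $A$-algebra automorphism of $M_n(A)$ (multiplicativity is checked on the basis from $\rho(g_i)\rho(g_j)=\rho(g_ig_j)$). By Skolem--Noether over the local ring $A$ — or concretely, by transporting the standard module along $\phi$ and using that $M_n(A)$ is Morita equivalent to $A$ — the automorphism $\phi$ is inner, $\phi(x)=gxg^{-1}$ for some $g\in GL_n(A)$, and hence $\rho'(h)=\phi(\rho(h))=g\rho(h)g^{-1}$ for all $h$; after replacing $g$ by $g^{-1}$ this is the assertion $\rho=g\rho'g^{-1}$.

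For general $A$, write $A=\varprojlim_k A/\mathfrak{m}_A^k$, each quotient Artinian local with residue field $\mathbb{F}$, and note that $\rho,\rho'$ still satisfy the hypotheses modulo $\mathfrak{m}_A^k$. The Artinian case produces $g_k\in GL_n(A/\mathfrak{m}_A^k)$ with $\rho\equiv g_k\rho'g_k^{-1}\pmod{\mathfrak{m}_A^k}$; any two such conjugators differ by a scalar (by Schur's lemma the centralizer of the absolutely irreducible $\overline{\rho}$ is $\mathbb{F}$, so by Nakayama the centralizer of $\rho\bmod\mathfrak{m}_A^k$ is the scalars). Hence one may rescale the $g_k$ inductively so that they become compatible under the transition maps, and the resulting element $g\in GL_n(A)=\varprojlim_k GL_n(A/\mathfrak{m}_A^k)$ satisfies $\rho=g\rho'g^{-1}$, since the identity holds modulo every power of $\mathfrak{m}_A$.

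I expect the only genuinely delicate point to be in the first paragraph: showing that the image of $\rho'$, not merely that of $\rho$, spans the matrix algebra, so that the trace pairing is usable on both sides. It is tempting to argue this through characteristic polynomials, but that fails in residue characteristic $p\le n$, where equal traces need not force equal semisimplifications; the invertibility-of-Gram-matrix argument above is what circumvents this. Everything else — Nakayama, the construction of $\phi$, Skolem--Noether, and the inverse limit — is routine bookkeeping.
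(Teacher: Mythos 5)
The paper states this proposition without proof, so there is nothing to compare against; your argument is the standard one (essentially Carayol's lemma, as in Mazur's and Carayol's articles) and it is correct. You correctly identify and handle the one genuinely delicate point: since equal traces do not give equal semisimplifications via Brauer--Nesbitt when the residue characteristic is at most $n$, one cannot conclude directly that $\overline{\rho}'$ is absolutely irreducible, and your Gram-matrix argument --- using that the trace pairing on the full matrix algebra $M_n(\mathbb{F})$ is perfect in every characteristic, unlike the trace form on $\mathfrak{sl}_n$ --- is exactly the right way to show that $\{\overline{\rho}'(g_i)\}$ is nonetheless a basis. The remaining steps (Nakayama to promote to $A$-bases in the Artinian case, the trace pairing to show $\phi(\rho(g))=\rho'(g)$ for all $g$, Skolem--Noether for $M_n$ over a local ring, and the inverse limit with the scalar ambiguity of conjugators controlled by the fact that $\rho'(G)$ spans $M_n(A/\mathfrak{m}_A^k)$ so its centralizer is the scalars) are all sound.
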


By using \textit{Chebotarev density theorem}, we get the following corollary:

\begin{corollary} \label{cor: chebotarev and residually irreducible representation}
Let $\rho, \rho^{\prime}:G_{K,S} \rightarrow GL_n(A)$ are two representations and $\rho$ is residually absolutely irreducible. If for all $\mathfrak{p} \notin S$ we have $\tr(\rho(\Frob{\mathfrak{p}}))=\tr(\rho^{\prime}(\Frob{\mathfrak{p}}))$, then $\rho=g\rho^{\prime}g^{-1}$ for some $g \in GL_n(A)$. Also, we can assume that $\mathfrak{p}$ running through a set of prime ideals outside $S$ which has \textit{Dirichlet density} $1$.
\end{corollary}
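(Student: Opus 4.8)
The plan is to reduce the statement to Proposition~\ref{prop: residually irreducible representations}, whose hypothesis is that the traces of $\rho$ and $\rho'$ agree on \emph{every} element of the group. So the whole content of the corollary is to upgrade ``traces agree on Frobenius classes'' to ``traces agree everywhere'', and for this the essential tools are the topology on $G_{K,S}$ together with the Chebotarev density theorem. Recall that $G_{K,S}$ is profinite, that a coefficient ring $A$ is profinite (hence Hausdorff), and that $\rho,\rho'$ are continuous; since the trace is a linear function of the matrix entries, the maps $h\mapsto \tr(\rho(h))$ and $h\mapsto \tr(\rho'(h))$ from $G_{K,S}$ to $A$ are continuous.

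First I would consider the agreement locus
\[
E:=\{\,h\in G_{K,S}\ :\ \tr(\rho(h))=\tr(\rho'(h))\,\},
\]
which is a union of conjugacy classes (the trace is a class function) and is \emph{closed}: it is the preimage of the diagonal $\Delta\subseteq A\times A$ under the continuous map $h\mapsto(\tr(\rho(h)),\tr(\rho'(h)))$, and $\Delta$ is closed because $A$ is Hausdorff. By hypothesis $E$ contains the Frobenius conjugacy class $\Frob{\mathfrak{p}}$ for every $\mathfrak{p}\notin S$.

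Next I would show $\bigcup_{\mathfrak{p}\notin S}\Frob{\mathfrak{p}}$ is dense in $G_{K,S}$, which forces $E=G_{K,S}$ since $E$ is closed. A subset of a profinite group is dense exactly when it surjects onto every finite discrete quotient, so fix an open normal subgroup $N\trianglelefteq G_{K,S}$, corresponding to a finite Galois subextension $L/K$ of $K_S/K$, so $G_{K,S}/N\simeq\Gal(L/K)$. For each $\sigma\in\Gal(L/K)$ the Chebotarev density theorem gives infinitely many (indeed a positive density of) primes $\mathfrak{p}\notin S$ whose Frobenius in $\Gal(L/K)$ is conjugate to $\sigma$; letting $\sigma$ range over $\Gal(L/K)$ shows the image of $\bigcup_{\mathfrak{p}\notin S}\Frob{\mathfrak{p}}$ in $G_{K,S}/N$ is all of $G_{K,S}/N$. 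Hence the Frobenius classes are dense, so $E=G_{K,S}$, i.e.\ $\tr(\rho(h))=\tr(\rho'(h))$ for all $h\in G_{K,S}$, and Proposition~\ref{prop: residually irreducible representations} applied with $G=G_{K,S}$ produces $g\in GL_n(A)$ with $\rho=g\rho'g^{-1}$.

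For the last assertion, I would observe that if $\mathfrak{p}$ is restricted to an arbitrary set $T$ of primes outside $S$ of Dirichlet density $1$, the argument is unchanged: removing a density-zero set of primes cannot exhaust a positive-density Chebotarev class, so for each $\sigma$ there is still some $\mathfrak{p}\in T$ with $\Frob{\mathfrak{p}}$ conjugate to $\sigma$ in every finite quotient, hence $\bigcup_{\mathfrak{p}\in T}\Frob{\mathfrak{p}}$ is still dense and the proof goes through verbatim. The only point requiring genuine care is the topological one---that $A$ is Hausdorff and the trace maps continuous, so that $E$ is actually closed---since without it the density of the Frobenius classes would not by itself give agreement of traces everywhere; everything else is a direct application of Chebotarev and the preceding proposition.
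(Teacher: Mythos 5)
Your proposal is correct and is precisely the argument the paper intends: the corollary is stated as an immediate consequence of Chebotarev density together with Proposition~\ref{prop: residually irreducible representations}, and your write-up supplies exactly the standard details (density of the Frobenius classes in $G_{K,S}$, closedness of the trace-agreement locus via continuity and the Hausdorff property of $A$, and the observation that a density-one set still meets every positive-density Chebotarev class).
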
 

\subsection{Deforming Galois Representations}

\subsubsection{Universal Deformation Ring}

Let $G$ be a group and fix a continuous representation $\overline{\rho}:G \rightarrow GL_n(\mathbb{F})$.

\begin{definition} \label{def: lift and deformation}
For a ring $A \in \textbf{CNL}$, a \textit{lift} or a \textit{framed deformation} of $\overline{\rho}$ to $A$ is a continuous homomorphism $\rho:G \rightarrow GL_n(A)$ such that $ \rho \equiv \rho^{\prime} \pmod{\mathfrak{m}_A} $:
\begin{equation*}
\begin{tikzcd}
G \arrow[rd, "\overline{\rho}"] \arrow[r, "\rho"] & GL_n(A) \arrow[d, "\mod{\mathfrak{m}_A}"] \\ 
& GL_n(\mathbb{F})
\end{tikzcd}.
\end{equation*}
We say that two lifts $\rho$ and $\rho^{\prime}$ of $\overline{\rho}$ to $A$ are strictly equivalent if there exists $g \in 1+M_n(\mathfrak{m}_A)=\Ker(GL_n(A) \rightarrow GL_n(\mathbb{F}))$ such that $\rho=g\rho^{\prime}g^{-1}$. A \textit{deformaton} of $\overline{\rho}$ to $A$ is a strict equivalence class of lifts.
\end{definition}

\begin{remark}
We will often abuse the notation by denoting a deformation by a lift in its strict equivalence class.
\end{remark}

Let \textbf{SET} denote the category of \textit{Sets}.

\begin{definition} \label{def: lifting and deformation functor}
The \textit{lifting functor} or \textit{framed deformation functor} for $\overline{\rho}$ is the functor:
\begin{align*}
D_{\overline{\rho}}^{\square}: & \textbf{CNL} \rightarrow \textbf{SET} \\
& A \mapsto \text{\{lifts of} \: \overline{\rho} \: \text{to} \: A\}
\end{align*}
and the \textit{deformation functor} for $\overline{\rho}$ is the functor:
\begin{align*}
D_{\overline{\rho}}: & \textbf{CNL} \rightarrow \textbf{SET} \\
& A \mapsto \text{\{deformations of} \: \overline{\rho} \: \text{to} \: A\}.
\end{align*}
\end{definition}

\begin{remark}
We write $D_{\overline{\rho}, \Lambda}^{\square}$ and $D_{\overline{\rho}, \Lambda}$ for reductions of $D_{\overline{\rho}}^{\square}$ and $D_{\overline{\rho}}$ to $\textbf{CNL}_\Lambda$, respectively. Note that sometimes we will omit $\overline{\rho}$ and/or $\Lambda$ from the notation, if they are understood.
\end{remark}

\begin{definition}
We call a functor $F:\textbf{CNL} \rightarrow \textbf{SET}$ a continuous functor if for any $A \in \textbf{CNL}$, the natural map $F(A) \rightarrow \varprojlim F(A/\mathfrak{m}_A^i)$ be a bijection.
\end{definition}

\begin{proposition}
The functors $D_{\overline{\rho}}^{\square}$ and $D_{\overline{\rho}}$ are continuous.
\end{proposition}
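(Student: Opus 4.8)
The plan is to reduce everything to the fact that, for $A \in \textbf{CNL}$, the ring $A$ is $\mathfrak{m}_A$-adically complete with each $A/\mathfrak{m}_A^i$ a \emph{finite} ring (it is Artinian local with residue field $\mathbb{F}$), so that $A = \varprojlim_i A/\mathfrak{m}_A^i$ as a topological ring and hence $GL_n(A) = \varprojlim_i GL_n(A/\mathfrak{m}_A^i)$ as a profinite group, the transition maps being surjective and each $GL_n(A/\mathfrak{m}_A^i)$ finite and discrete. (Being a unit in $A$ is detected already in $A/\mathfrak{m}_A$, which is why $GL_n$ commutes with this particular inverse limit.) I will first treat $D_{\overline{\rho}}^{\square}$ and then bootstrap to $D_{\overline{\rho}}$.

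For the framed functor, injectivity of $D_{\overline{\rho}}^{\square}(A) \to \varprojlim_i D_{\overline{\rho}}^{\square}(A/\mathfrak{m}_A^i)$ is immediate from the Krull intersection theorem: if two lifts $\rho,\rho'$ agree modulo $\mathfrak{m}_A^i$ for every $i$, then the entries of $\rho(g)-\rho'(g)$ lie in $\bigcap_i \mathfrak{m}_A^i = 0$. For surjectivity, a compatible system $(\rho_i)_i$ of continuous lifts $\rho_i \colon G \to GL_n(A/\mathfrak{m}_A^i)$ assembles, by the universal property of the inverse limit of topological groups, into a continuous homomorphism $\rho \colon G \to \varprojlim_i GL_n(A/\mathfrak{m}_A^i) = GL_n(A)$; reducing modulo $\mathfrak{m}_A$ recovers $\rho_1 = \overline{\rho}$, so $\rho$ is a lift mapping to $(\rho_i)_i$.

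For the deformation functor I write $D_{\overline{\rho}}(A) = D_{\overline{\rho}}^{\square}(A)/\Gamma_A$ with $\Gamma_A = 1 + M_n(\mathfrak{m}_A) = \Ker(GL_n(A) \to GL_n(\mathbb{F}))$, a closed (hence profinite) subgroup of $GL_n(A)$, and likewise $\Gamma_{A/\mathfrak{m}_A^i}$ together with surjective reduction maps $\Gamma_A \to \Gamma_{A/\mathfrak{m}_A^i}$. For surjectivity of $D_{\overline{\rho}}(A) \to \varprojlim_i D_{\overline{\rho}}(A/\mathfrak{m}_A^i)$, start from a compatible system of classes $([\rho_i])_i$ and upgrade it, inductively in $i$, to a system of representatives that is itself compatible: given a representative $\rho_i$, the relation $[\rho_{i+1} \bmod \mathfrak{m}_A^i] = [\rho_i]$ furnishes $\bar g \in \Gamma_{A/\mathfrak{m}_A^i}$ conjugating $\rho_{i+1} \bmod \mathfrak{m}_A^i$ to $\rho_i$; lifting $\bar g$ to $\Gamma_{A/\mathfrak{m}_A^{i+1}}$ and replacing $\rho_{i+1}$ by its conjugate yields a representative of $[\rho_{i+1}]$ that reduces to $\rho_i$. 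The resulting compatible system of lifts glues, by the continuity of $D_{\overline{\rho}}^{\square}$ already proved, to a lift $\rho$ whose class maps to $([\rho_i])_i$.

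The one genuinely non-formal point is injectivity for $D_{\overline{\rho}}$: strict equivalences modulo the various $\mathfrak{m}_A^i$ need not be compatible, so they cannot simply be glued, and I will instead invoke compactness. Given lifts $\rho,\rho'$ to $A$ that are strictly equivalent in every $D_{\overline{\rho}}(A/\mathfrak{m}_A^i)$, put
\[
T_i = \{\, g \in \Gamma_A : g\,\rho'(h)\,g^{-1} \equiv \rho(h) \pmod{\mathfrak{m}_A^i} \text{ for all } h \in G \,\}.
\]
Each $T_i$ is an intersection over $h$ of closed subsets of the compact group $\Gamma_A$ (for fixed $h$ the condition is the preimage of a point under the continuous map $\Gamma_A \to GL_n(A/\mathfrak{m}_A^i)$, $g \mapsto \bar g\,\overline{\rho'(h)}\,\bar g^{-1}$, into a finite discrete group), it is nonempty because any lift to $\Gamma_A$ of a strict equivalence modulo $\mathfrak{m}_A^i$ lies in it, and $T_1 \supseteq T_2 \supseteq \cdots$. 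By compactness $\bigcap_i T_i \neq \emptyset$, and any $g$ in this intersection satisfies $g\,\rho'(h)\,g^{-1} = \rho(h)$ for all $h$ because $\bigcap_i \mathfrak{m}_A^i = 0$; hence $[\rho] = [\rho']$. I expect this compactness step, together with the routine verification that each $T_i$ is closed and nonempty, to be the main obstacle; the rest follows directly from the $\mathfrak{m}_A$-adic completeness of $A$.
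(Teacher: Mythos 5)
Your proof is correct and complete; the paper itself states this proposition without proof, and your argument is exactly the standard one it implicitly relies on (reduce to $GL_n(A)=\varprojlim_i GL_n(A/\mathfrak{m}_A^i)$, glue compatible lifts for $D_{\overline{\rho}}^{\square}$, inductively align representatives for surjectivity of $D_{\overline{\rho}}$, and handle the only delicate point --- injectivity for $D_{\overline{\rho}}$ --- by a compactness argument with the nested nonempty closed sets $T_i$ in the profinite group $1+M_n(\mathfrak{m}_A)$). No gaps.
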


\begin{corollary}
The functors $D_{\overline{\rho}}^{\square}$ and $D_{\overline{\rho}}$ are completely determined by their restriction to \textbf{Art}.
\end{corollary}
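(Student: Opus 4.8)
The plan is to read this off directly from the continuity of $D_{\overline{\rho}}^{\square}$ and $D_{\overline{\rho}}$ proved in the previous proposition, once we observe that the truncations $A/\mathfrak{m}_A^i$ already belong to \textbf{Art}. First I would record that for any $A \in \textbf{CNL}$ and any $i \geq 1$, the ring $A/\mathfrak{m}_A^i$ is local with residue field $\mathbb{F}$, Noetherian (as a quotient of the Noetherian ring $A$), and has nilpotent maximal ideal $\mathfrak{m}_A/\mathfrak{m}_A^i$; hence it is Artinian, automatically complete, and thus an object of \textbf{Art} (and of $\textbf{Art}_\Lambda$ for whatever $\Lambda$-structure is being carried along). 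So the entire tower $(A/\mathfrak{m}_A^i)_{i\geq 1}$, together with its transition maps, lives inside \textbf{Art}. Continuity then gives canonical bijections $D_{\overline{\rho}}^{\square}(A) \xrightarrow{\ \sim\ } \varprojlim_i D_{\overline{\rho}}^{\square}(A/\mathfrak{m}_A^i)$ and $D_{\overline{\rho}}(A) \xrightarrow{\ \sim\ } \varprojlim_i D_{\overline{\rho}}(A/\mathfrak{m}_A^i)$, so the values of the functors on all of \textbf{CNL} are recovered from their values on \textbf{Art}.

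Next I would treat functoriality, so that the statement holds for the functors and not merely objectwise. Given a morphism $f\colon A \to B$ in \textbf{CNL}, the fact that $f$ is a local homomorphism gives $f(\mathfrak{m}_A) \subseteq \mathfrak{m}_B$, hence induced morphisms $f_i\colon A/\mathfrak{m}_A^i \to B/\mathfrak{m}_B^i$ in \textbf{Art} compatible with the transition maps of the two towers. Applying $D_{\overline{\rho}}^{\square}$ (resp. $D_{\overline{\rho}}$) to the $f_i$ and passing to the inverse limit, and comparing with the bijections above, recovers $D_{\overline{\rho}}^{\square}(f)$ (resp. $D_{\overline{\rho}}(f)$). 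Thus both the values and the action on morphisms are determined by the restriction to \textbf{Art}; equivalently, the restriction functor from continuous functors $\textbf{CNL} \to \textbf{SET}$ to functors $\textbf{Art} \to \textbf{SET}$ is fully faithful, with quasi-inverse on its essential image given by the right Kan extension along $\textbf{Art} \hookrightarrow \textbf{CNL}$, which is computed precisely as the inverse limit over the tower of Artinian quotients. The same argument applies verbatim to $D_{\overline{\rho},\Lambda}^{\square}$ and $D_{\overline{\rho},\Lambda}$.

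There is essentially no obstacle here: the only point that deserves a line of care — and it is the content of continuity, nothing more — is that the inverse limit over the Artinian tower genuinely reproduces $F(A)$ with its full structure, i.e. that passing to the $\mathfrak{m}_A$-adic truncations loses no information. Once continuity is granted from the preceding proposition, the corollary is immediate.
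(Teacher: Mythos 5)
Your proof is correct and is exactly the argument the paper intends: the corollary is an immediate consequence of the continuity proposition, since each truncation $A/\mathfrak{m}_A^i$ is an object of \textbf{Art} and $F(A)\simeq\varprojlim_i F(A/\mathfrak{m}_A^i)$ recovers both the values and (by compatibility with the induced maps on truncations) the action on morphisms. The extra remarks on full faithfulness and the Kan extension are a harmless elaboration beyond what the paper records.
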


Let us recall that a functor $F:\textbf{CNL} \rightarrow \textbf{SET}$ is \textit{representable} if there exists a ring $R \in \textbf{CNL}$ and an isomorphism of functors $F \simeq \Hom{\textbf{CNL}}(R, -)$. If $F$ be representable by $R$, then there exists a universal object $\alpha^{univ} \in F(R)$ corresponding to $identity \in \Hom{\textbf{CNL}}(R, R) \simeq F(R)$ with the following property:
\begin{itemize}
\item[] For any $A \in \textbf{CNL}$ and any $\beta \in F(A)$, there is a unique \textbf{CNL}-morphism $f:R \rightarrow A$ such that $\beta=F(f)(\alpha^{univ})$.
\end{itemize}

For the lifting functor and the deformation functor we have the following important theorem:

\begin{theorem} \label{th: representability of liftng and deformation}
Let we have $\overline{\rho}:G \rightarrow GL_n(\mathbb{F})$ and let $G$ satisfies the $p$-finiteness condition.
\begin{itemize}
\item[(1)] (Kisin \cite{Kisin}) The functor $D_{\overline{\rho}}^{\square}$ is representable. We call the representing ring of it the \textit{universal lifting ring} or \textit{universal framed deformation ring} and denote it by $R_{\overline{\rho}}^{\square}$.
\item[(2)] (Mazur \cite{Mazur1}, Ramakrishna \cite{Ramakrishna}) If $\End_{\mathbb{F}[G]}(\overline{\rho})=\mathbb{F}$ (this is hold, for example when $\overline{\rho}$ is absolutely irreducible), then $D_{\overline{\rho}}$ is representable. We call the representing ring of it the \textit{universal deformation ring} and denote it by $R_{\overline{\rho}}$.
\end{itemize}
\end{theorem}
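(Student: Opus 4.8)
The plan is to verify Schlessinger's representability criterion for functors of Artin rings (the conditions usually labelled $(H1)$--$(H4)$), in its relative form over $\Lambda$, and then to promote representability on $\textbf{Art}_\Lambda$ to representability on all of $\textbf{CNL}_\Lambda$ using the continuity of $D_{\overline{\rho}}^{\square}$ and $D_{\overline{\rho}}$ already recorded above. Throughout, write $\mathrm{ad}\,\overline{\rho}$ for $M_n(\mathbb{F})$ with the $G$-action by conjugation through $\overline{\rho}$. The common first step is the tangent-space computation: writing a lift to $\mathbb{F}[\epsilon]$ (with $\epsilon^2=0$) as $\rho(g) = (1+\epsilon\, c(g))\overline{\rho}(g)$ and expanding $\rho(gh)=\rho(g)\rho(h)$ shows that $D_{\overline{\rho}}^{\square}(\mathbb{F}[\epsilon]) \cong Z^1(G,\mathrm{ad}\,\overline{\rho})$, with strict equivalence corresponding to modification by a coboundary, so that $D_{\overline{\rho}}(\mathbb{F}[\epsilon]) \cong H^1(G,\mathrm{ad}\,\overline{\rho})$ and, by the analogous computation, $H^0(G,\mathrm{ad}\,\overline{\rho}) = \End_{\mathbb{F}[G]}(\overline{\rho})$. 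The essential point --- and the only place the $p$-finiteness hypothesis enters --- is that these $\mathbb{F}$-spaces are finite-dimensional: choosing an open normal subgroup $G_0$ acting trivially on $\mathrm{ad}\,\overline{\rho}$, inflation--restriction bounds $H^1(G,\mathrm{ad}\,\overline{\rho})$ in terms of $H^1(G/G_0,\mathrm{ad}\,\overline{\rho})$ (finite, as $G/G_0$ is finite) and a finite number of copies of $\mathrm{Hom}_{\mathrm{cont}}(G_0,\mathbb{Z}/p\mathbb{Z})$ (finite by $p$-finiteness, since $\mathrm{ad}\,\overline{\rho}$ is a finite abelian $p$-group); and since $Z^1(G,\mathrm{ad}\,\overline{\rho})$ is an extension of $H^1$ by $B^1$, with $B^1$ a quotient of $\mathrm{ad}\,\overline{\rho}$, it too is finite-dimensional.

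For part (1), I would first observe that $M_n$, and hence $GL_n$, commutes with fibre products of rings in $\textbf{Art}_\Lambda$: $GL_n(A'\times_A A'') = GL_n(A')\times_{GL_n(A)} GL_n(A'')$. Consequently a pair of lifts of $\overline{\rho}$ to $A'$ and to $A''$ that agree over $A$ glues to a unique lift to $A'\times_A A''$, so the natural map
\[
D_{\overline{\rho}}^{\square}(A'\times_A A'') \longrightarrow D_{\overline{\rho}}^{\square}(A')\times_{D_{\overline{\rho}}^{\square}(A)} D_{\overline{\rho}}^{\square}(A'')
\]
is a bijection for every such diagram. In particular $(H1)$, $(H2)$ and $(H4)$ all hold in their strong (bijective) form, and $(H3)$ is precisely the finite-dimensionality of $Z^1(G,\mathrm{ad}\,\overline{\rho})$ from the previous paragraph. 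Schlessinger's theorem then gives that $D_{\overline{\rho}}^{\square}|_{\textbf{Art}_\Lambda}$ is pro-representable; finiteness of the tangent space forces the representing ring $R_{\overline{\rho}}^{\square}$ to be a quotient of $\Lambda\llbracket X_1,\dots,X_d\rrbracket$ with $d = \dim_{\mathbb{F}} Z^1(G,\mathrm{ad}\,\overline{\rho})$, hence Noetherian and an object of $\textbf{CNL}_\Lambda$. Finally, continuity of $D_{\overline{\rho}}^{\square}$ upgrades representability from $\textbf{Art}_\Lambda$ to all of $\textbf{CNL}_\Lambda$. Note that this part uses no hypothesis on $\overline{\rho}$, only $p$-finiteness of $G$.

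For part (2), the conditions $(H1)$, $(H2)$, $(H3)$ for $D_{\overline{\rho}}$ follow in a parallel way: surjectivity in $(H1)$ can be deduced from the framed case by conjugating a pair of glued lifts into strict agreement, while $(H2)$ and $(H3)$ are the computation $D_{\overline{\rho}}(\mathbb{F}[\epsilon])\cong H^1(G,\mathrm{ad}\,\overline{\rho})$ together with its finiteness. The main obstacle is $(H4)$: for a small extension $A'' \to A$ one must show that
\[
D_{\overline{\rho}}(A''\times_A A'') \longrightarrow D_{\overline{\rho}}(A'')\times_{D_{\overline{\rho}}(A)} D_{\overline{\rho}}(A'')
\]
is injective. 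Two lifts to $A''\times_A A''$ that become strictly equivalent after projection to each copy of $A''$ differ by an automorphism whose reduction commutes with $\overline{\rho}$, i.e. the ambiguity in gluing is governed by $H^0(G,\mathrm{ad}\,\overline{\rho}) = \End_{\mathbb{F}[G]}(\overline{\rho})$; the hypothesis $\End_{\mathbb{F}[G]}(\overline{\rho})=\mathbb{F}$ says this group consists only of scalars, which act trivially by conjugation, so the ambiguity vanishes and $(H4)$ holds. Schlessinger's criterion then yields pro-representability of $D_{\overline{\rho}}|_{\textbf{Art}_\Lambda}$ by a Noetherian ring $R_{\overline{\rho}}\in\textbf{CNL}_\Lambda$, and, as before, continuity extends this to representability on $\textbf{CNL}_\Lambda$. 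I would close by noting that the absolutely irreducible case is the special case of Schur's lemma, and that passing from the absolute situation ($\Lambda = W(\mathbb{F})$) to the relative statement requires no new idea beyond replacing $\textbf{Art}$ by $\textbf{Art}_\Lambda$ throughout.
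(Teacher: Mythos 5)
Your proposal is correct in outline and essentially complete. The paper itself does not actually prove this theorem: it only records, in the remark following the statement, that part (1) can be established by an explicit construction of $R_{\overline{\rho}}^{\square}$ and that part (2) admits (at least) four proofs, the first of which is Mazur's argument via Schlessinger's criterion. You carry out precisely that Schlessinger route, and moreover apply it uniformly to part (1) as well, where the paper instead gestures at an explicit presentation of the lifting ring. The trade-off is the usual one: the explicit construction exhibits the ring directly (and makes visible the surjection from a power series ring in $\dim_{\mathbb{F}} Z^1(G,\mathrm{ad}\,\overline{\rho})$ variables), whereas your argument is shorter and isolates exactly where each hypothesis enters --- $p$-finiteness for $(H3)$ via inflation--restriction, and $\End_{\mathbb{F}[G]}(\overline{\rho})=\mathbb{F}$ for $(H4)$. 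Two small points to tighten in a full write-up. First, in $(H4)$ the conjugating discrepancy $g_1g_2^{-1}$ lives over the Artinian base $A$ of the small extension, not over $\mathbb{F}$, so what you actually need is that the centralizer of \emph{any} lift $\rho_A$ of $\overline{\rho}$ in $M_n(A)$ consists of scalars; this follows from $\End_{\mathbb{F}[G]}(\overline{\rho})=\mathbb{F}$ by induction on the length of $A$ (a Nakayama-type argument), and is not literally the statement that the reduction commutes with $\overline{\rho}$. Second, in $(H1)$ for $D_{\overline{\rho}}$ you should note that lifting the conjugating element from the base to the source uses the surjectivity of $1+M_n(\mathfrak{m}_{A''})\rightarrow 1+M_n(\mathfrak{m}_{A})$, which is exactly where the surjectivity built into smallness is used. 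With these clarifications your proof is a faithful rendering of the Mazur--Ramakrishna argument that the paper cites but does not reproduce.
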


Hence, there exists a universal representation $\rho^{univ}: G \rightarrow GL_n(R_{\overline{\rho}})$ such that for each $A \in \textbf{CNL}$, every deformation $\rho: G \rightarrow GL_n(A)$ of $\overline{\rho}$ comes from a unique ring homomorphism $f: R_{\overline{\rho}} \rightarrow A$:
\begin{equation*}
\begin{tikzcd}
G \arrow[rd, "\rho"] \arrow[r, "\rho^{univ}"] & GL_n(R_{\overline{\rho}}) \arrow[d, "f"] \\
& GL_n(A)
\end{tikzcd}.
\end{equation*}

\begin{remark}
The universal deformaton $\rho^{univ}$ is in fact the corresponding element to $identity \in \Hom{\textbf{CNL}}(R_{\overline{\rho}}, R_{\overline{\rho}})$ in the correspondence $D_{\overline{\rho}}(R_{\overline{\rho}}) \cong \Hom{\textbf{CNL}}(R_{\overline{\rho}}, R_{\overline{\rho}})$ of Sets.
\end{remark}

It is trivial that the previous paragraph and Remark remain true for $D_{\overline{\rho}}^{\square}$ and $R_{\overline{\rho}}^{\square}$.

\begin{remark}
Let $\Lambda \in \textbf{CNL}$. Then, the restriction of $D_{\overline{\rho}}$ ($D_{\overline{\rho}}^{\square}$) to $\textbf{CNL}_{\Lambda}$, $D_{\overline{\rho}, \Lambda}$ ($D_{\overline{\rho}, \Lambda}^{\square}$), is representable by $R_{\overline{\rho}} \hat{\tensor{W(\mathbb{F})}} \Lambda$ ($R_{\overline{\rho}}^{\square} \hat{\tensor{W(\mathbb{F})}} \Lambda$), where $\hat{\otimes}$ means the \textit{completed tensor product}.
\end{remark}

\begin{remark}
First part of Theorem \ref{th: representability of liftng and deformation} can be proved by an explicit construction of $R_{\overline{\rho}}^{\square}$. To prove the second part of it there are (at least) four methods: proof of \textit{Mazur} \cite{Mazur1} using \textit{Schlessinger's} criterion, proof of \textit{Kisin} \cite{Kisin} using the quotient of $D_{\overline{\rho}}^{\square}$ by the action of smooth formal group $\hat{PGL_n}$, and two explicit constructions of $R_{\overline{\rho}}$ due to \textit{de Smit-Lenstra} \cite{DL} and \textit{Faltings} \cite{DDT}.
\end{remark}

We want to mention Schlessinger's criterion. Before it, let us recall a theorem of \textit{Grothendieck} about representability of functors. Note that, if $F: \textbf{CNL} \rightarrow \textbf{SET}$ be representable by $R \in \textbf{CNL}$ and the maps $A \rightarrow C$ and $B \rightarrow C$ be morphisms in \textbf{Art}, then the natural map $F(A \times_C B) \rightarrow F(A) \times_{F(C)} F(B)$ is a bijection, because:
\begin{align*}
F(A \times_C B)= & \Hom{\textbf{CNL}}(R, A \times_C B) \\
& = \Hom{\textbf{CNL}}(R, A) \times_{\Hom{\textbf{CNL}}(R, C)} \Hom{\textbf{CNL}}(R, B) \\
& = F(A) \times_{F(C)} F(B).
\end{align*}
Note that the second equality is the universal property of the fiber product.

Grothendieck showed that the converse of above is (almost) true. Before stating the Grothendieck theorem, we need the following definition:

\begin{definition} \label{def: tangent space}
Let $\mathbb{F}[\epsilon]:=\mathbb{F}[X]/\langle X^2 \rangle $ be the ring of dual numbers over $\mathbb{F}$. The tangent space of the functor $F: \textbf{CNL} \rightarrow \textbf{SET}$ is defined to be $F(\mathbb{F}[\epsilon])$. This tangent space is just a set. If we assume that for the maps $A \rightarrow C$ and $B \rightarrow C$ in \textbf{Art}, the natural map $F(A \times_C B) \rightarrow F(A) \times_{F(C)} F(B)$ is a bijection and also $F(\mathbb{F})$ is a singleton, then we can make this tangent space into an $\mathbb{F}$-vector space. First, we can define the addition on $F(\mathbb{F}[\epsilon])$ as follows:
\begin{equation*}
\begin{tikzcd}
F(\mathbb{F}[\epsilon]) \times F(\mathbb{F}[\epsilon]) = F(\mathbb{F}[\epsilon]) \times_{F(\mathbb{F})} F(\mathbb{F}[\epsilon]) = 
F(\mathbb{F}[\epsilon] \times_{\mathbb{F}} \mathbb{F}[\epsilon])  \arrow[r, "\psi"] & F(\mathbb{F}[\epsilon])
\end{tikzcd}
\end{equation*}
where $\psi (F(a+b\epsilon, a+c\epsilon))=F(a+(b+c)\epsilon)$. Then, we define the scalar multiplication of $\gamma \in \mathbb{F}$ on $F(\mathbb{F}[\epsilon])$ via $\gamma . F(a+b\epsilon):=F(a+\gamma b \epsilon)$. In fact, the map $\mathbb{F} \times F(\mathbb{F}(\epsilon)) \rightarrow F(\mathbb{F}(\epsilon))$ which determines the scalar multiplication, is induced by the map $\mathbb{F} \times \mathbb{F}(\epsilon) \rightarrow \mathbb{F}(\epsilon)$ which sends $(\gamma, \epsilon)$ to $\gamma \epsilon$.
\end{definition}

Now, we are ready to state Grothendieck's theorem:

\begin{theorem}{(Grothendieck)} \label{th: Grothendieck rep}
Let $F: \textbf{CNL} \rightarrow \textbf{SET}$ be a continuous functor such that $F(\mathbb{F})$ is a singleton. Then, $F$ is representable if and only if the following conditions hold:
\begin{itemize}
\item[(1)] For all maps $A \rightarrow C$ and $B \rightarrow C$ in \textbf{Art}, the natural map $F(A \times_C B) \rightarrow F(A) \times_{F(C)} F(B)$ is a bijection.
\item[(2)] $\dim_{\mathbb{F}}F(\mathbb{F}[\epsilon]) < \infty$ .
\end{itemize}
\end{theorem}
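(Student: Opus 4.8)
The plan is to prove the two implications separately. The forward direction is formal: if $F\simeq h_R:=\Hom{\textbf{CNL}}(R,-)$ for some $R\in\textbf{CNL}$, then (1) is exactly the universal property of the fibre product, spelled out as in the computation displayed above for representable functors, and (2) holds because a $\textbf{CNL}$-morphism $R\to\mathbb{F}[\epsilon]$ must kill $\mathfrak{m}_R^2$ and $\mathfrak{m}_{W(\mathbb{F})}R$, so that $F(\mathbb{F}[\epsilon])$ is identified with the $\mathbb{F}$-dual of $\mathfrak{m}_R/(\mathfrak{m}_R^2+\mathfrak{m}_{W(\mathbb{F})}R)$, which is finite-dimensional since $R$ is Noetherian. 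The converse — a form of Schlessinger's representability criterion — carries all the weight; here is the approach.

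Assume (1) and (2). Using (1) and the fact that $F(\mathbb{F})$ is a point, endow $t_F:=F(\mathbb{F}[\epsilon])$ with the $\mathbb{F}$-vector space structure of Definition~\ref{def: tangent space}, so that $d:=\dim_{\mathbb{F}}t_F$ is finite by (2). The first step is the lifting lemma: for a surjection $A'\to A$ in $\textbf{Art}$ whose kernel $I$ is annihilated by $\mathfrak{m}_{A'}$, and any $\xi\in F(A)$, the fibre of $F(A')\to F(A)$ over $\xi$ is empty or a torsor under $t_F\otimes_{\mathbb{F}}I$. This comes from the ring isomorphism $A'\times_A A'\simeq A'\times_{\mathbb{F}}(\mathbb{F}\oplus I)$: applying $F$ and equating the two descriptions of the left-hand side given by (1) — namely $F(A')\times_{F(A)}F(A')$, and $F(A')\times(t_F\otimes_{\mathbb{F}}I)$ coming from $\mathbb{F}\oplus I\simeq\mathbb{F}[\epsilon]\times_{\mathbb{F}}\cdots\times_{\mathbb{F}}\mathbb{F}[\epsilon]$ and $F(\mathbb{F})$ a point — one reads off the torsor structure on each fibre, compatibly with the first projection. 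The same computation applied to $\mathbb{F}\oplus V\to\mathbb{F}$ for a finite-dimensional $\mathbb{F}$-space $V$ gives a natural isomorphism $F(\mathbb{F}\oplus V)\simeq t_F\otimes_{\mathbb{F}}V$.

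Next, construct the ring. Set $\Lambda=W(\mathbb{F})$, $\mathcal{R}=\Lambda\llbracket X_1,\ldots,X_d\rrbracket$ with maximal ideal $\mathfrak{m}$, and build inductively a chain of ideals $\mathfrak{a}_2\supseteq\mathfrak{a}_3\supseteq\cdots$ with $\mathfrak{m}\mathfrak{a}_q\subseteq\mathfrak{a}_{q+1}\subseteq\mathfrak{a}_q$ (hence $\mathfrak{m}^q\subseteq\mathfrak{a}_q$), together with compatible elements $\xi_q\in F(R_q)$, where $R_q:=\mathcal{R}/\mathfrak{a}_q$, as follows. Put $\mathfrak{a}_2=\mathfrak{m}^2+\mathfrak{m}_\Lambda\mathcal{R}$ and choose $\xi_2\in F(R_2)$ so that the morphism $h_{R_2}\to F$ it induces is an isomorphism on tangent spaces (possible by the previous paragraph, since $\mathfrak{m}_{R_2}$ has $\mathbb{F}$-dimension $d$). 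Given $(\mathfrak{a}_q,\xi_q)$, consider the ideals $\mathfrak{a}$ with $\mathfrak{m}\mathfrak{a}_q\subseteq\mathfrak{a}\subseteq\mathfrak{a}_q$ for which $\xi_q$ lifts along $\mathcal{R}/\mathfrak{a}\to R_q$; if $\mathfrak{a}$ and $\mathfrak{a}'$ are two such, then so is $\mathfrak{a}\cap\mathfrak{a}'$, because $\mathcal{R}/(\mathfrak{a}\cap\mathfrak{a}')=(\mathcal{R}/\mathfrak{a})\times_{\mathcal{R}/(\mathfrak{a}+\mathfrak{a}')}(\mathcal{R}/\mathfrak{a}')$, condition (1) turns this into a fibre product of the corresponding $F$-values, and the two chosen lifts of $\xi_q$ already agree over the common quotient $\mathcal{R}/(\mathfrak{a}+\mathfrak{a}')$ of $R_q$, hence glue; and a descending chain of such ideals stabilises since $\mathcal{R}$ is Noetherian. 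Let $\mathfrak{a}_{q+1}$ be the resulting smallest ideal and $\xi_{q+1}$ any lift of $\xi_q$ to $F(R_{q+1})$. Finally set $R:=\varprojlim_q R_q=\mathcal{R}/\bigcap_q\mathfrak{a}_q$, a quotient of the complete Noetherian local ring $\mathcal{R}$ and so an object of $\textbf{CNL}$; by continuity of $F$ the $\xi_q$ assemble into $\xi\in\varprojlim_q F(R_q)=F(R)$.

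It remains to show that the natural transformation $\eta:h_R\to F$ determined by $\xi$ is an isomorphism. First, $\eta$ is \emph{smooth}: for a surjection $A'\to A$ in $\textbf{Art}$ with kernel annihilated by $\mathfrak{m}_{A'}$, any $u\in h_R(A)$ whose image $\eta(u)$ lies in the image of $F(A')\to F(A)$ lifts to some $u'\in h_R(A')$ realising a prescribed lift of $\eta(u)$. After noting that any map from $R$ to an Artinian ring factors through some $R_q$ (because $R_q$ stabilises modulo each fixed power of $\mathfrak{m}$, by Noetherianity), this is forced by the minimality built into each $\mathfrak{a}_{q+1}$; combined with the choice of $\xi_2$, smoothness makes $\eta$ bijective on tangent spaces. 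A standard induction on the length of $A\in\textbf{Art}$, factoring $A$ through a chain of such surjections, then shows $\eta_A:h_R(A)\to F(A)$ is bijective: surjectivity follows from smoothness, and injectivity by comparing, at each step, the torsor of lifts on the $h_R$ side (under $(\mathfrak{m}_R/(\mathfrak{m}_R^2+\mathfrak{m}_\Lambda R))^{\vee}\otimes_{\mathbb{F}}I$) with the torsor of lifts on the $F$ side from the lifting lemma (under $t_F\otimes_{\mathbb{F}}I$), which are intertwined by $\eta$ and match on structure groups since $\eta$ is bijective on tangent spaces. Continuity of $F$ and of $h_R$ then extends the isomorphism from $\textbf{Art}$ to all of $\textbf{CNL}$. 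I expect the genuine obstacle to be the smoothness step: showing that the successive minimal choices $\mathfrak{a}_{q+1}$ are exactly strong enough to force the universal lifting property is the technical core of the argument, and it is where the bookkeeping of the inclusions $\mathfrak{m}\mathfrak{a}_q\subseteq\mathfrak{a}_{q+1}\subseteq\mathfrak{a}_q$ and of the stabilisation of the ideals $\mathfrak{m}^N+\mathfrak{a}_q$ for large $q$ must be handled with care.
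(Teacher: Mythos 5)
The paper does not actually prove this theorem: it is quoted as a result of Grothendieck, and the only argument supplied in the text is the easy forward computation (representability implies condition (1) via the universal property of the fibre product), given in the discussion immediately preceding the statement. Your forward direction agrees with that, and correctly adds the finiteness of the tangent space from Noetherianity of $R$. The converse, which carries all the weight, you prove by the standard Schlessinger-type hull construction (minimal ideals $\mathfrak{a}_{q+1}$ between $\mathfrak{m}\mathfrak{a}_q$ and $\mathfrak{a}_q$, a compatible system $\xi_q$, smoothness of $h_R\to F$, and then bootstrapping the tangent-space isomorphism to a full isomorphism using the torsor structure on fibres of small extensions). This is the correct and essentially canonical route, and the full bijectivity hypothesis in (1) is exactly what upgrades the hull to a representing object, as you use in the torsor comparison. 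Two small points to tighten. First, your stabilisation argument for the descending chain of candidate ideals invokes ``$\mathcal{R}$ Noetherian,'' but Noetherianity gives the ascending chain condition; the correct justification is that all candidates contain $\mathfrak{m}\mathfrak{a}_q$ and hence correspond to subspaces of the finite-dimensional $\mathbb{F}$-vector space $\mathfrak{a}_q/\mathfrak{m}\mathfrak{a}_q$ (finite-dimensionality being where Noetherianity of $\mathcal{R}$ enters), so a minimal element exists. Second, the smoothness step that you flag as the technical core really is the one place where more than bookkeeping is needed: given a small surjection $A'\to A$, one glues $\xi_q\in F(R_q)$ and the prescribed $\xi'\in F(A')$ over $F(A)$ using (1), lifts $\mathcal{R}\to R_q$ to $\mathcal{R}\to A'\times_A R_q$ by formal smoothness of the power series ring, and then uses minimality of $\mathfrak{a}_{q+1}$ to see that this lift factors compatibly through $R_{q+1}$; spelling that out would complete the argument, but the skeleton you give is the right one.
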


In practice, we can say that it is almost impossible to check the first condition of Grothendieck's Theorem \ref{th: Grothendieck rep} for all maps in \textbf{Art}. Schlessinger showed that for proving representability of $F: \textbf{Art} \rightarrow \textbf{SET}$, it is enough to check the first condition of Grothendieck's Theorem \ref{th: Grothendieck rep} for very restricted classes of maps in \textbf{Art}.

\begin{definition}
We say that a homomorphism $A \rightarrow C$ in \textbf{Art} is \textit{small}, if it is surjective and its kernel is principal and annihilated by $\mathfrak{m}_A$.
\end{definition}

\begin{theorem}{(Schlessinger's Criterion \cite{Schlessinger})} \label{th: Schlessinger}
Let $F: \textbf{CNL} \rightarrow \textbf{SET}$ be a continuous functor such that $F(\mathbb{F})$ is a singleton. For $\alpha: A \rightarrow C$ and $\beta: B \rightarrow C$ in \textbf{Art}, consider $f: F(A \times_C B) \rightarrow F(A) \times_{F(C)} F(B)$. Then, $F$ is representable if and only if the following conditions are satisfied:
\begin{itemize}
\item[(H1)] \: \: If $\alpha$ is small, then $f$ is surjective.
\item[(H2)] \: \: If $A=\mathbb{F}[\epsilon]$ and $C=\mathbb{F}$, then $f$ is bijective.
\item[(H3)] \: \: $\dim_{\mathbb{F}}F(\mathbb{F}[\epsilon]) < \infty$ .
\item[(H4)] \: \: If $A=B$ and $\alpha=\beta$ is small, then $f$ is bijective.
\end{itemize}
\end{theorem}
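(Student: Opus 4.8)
The plan is to prove the two implications separately, putting essentially all of the work into the ``hard'' direction. The implication ``representable $\Rightarrow$ (H1)--(H4)'' is routine: if $F\simeq\mathrm{Hom}_{\textbf{CNL}}(R,-)$ then (H1), (H2) and (H4) are instances of the bijectivity of $\mathrm{Hom}_{\textbf{CNL}}(R,A\times_C B)\xrightarrow{\sim}\mathrm{Hom}_{\textbf{CNL}}(R,A)\times_{\mathrm{Hom}_{\textbf{CNL}}(R,C)}\mathrm{Hom}_{\textbf{CNL}}(R,B)$ recorded just before Theorem \ref{th: Grothendieck rep}, while for (H3) one observes that a $\textbf{CNL}$-morphism $R\to\mathbb{F}[\epsilon]$ kills $\mathfrak{m}_R^2$ and so factors through $R/\mathfrak{m}_R^2$, which is a finite set because $R$ is Noetherian (so $\mathfrak{m}_R/\mathfrak{m}_R^2$ is a finite-dimensional $\mathbb{F}$-vector space); hence $F(\mathbb{F}[\epsilon])$ is finite, in particular finite-dimensional over the finite field $\mathbb{F}$. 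For the converse I would proceed in three stages: set up the tangent space, construct a versal hull, and promote it to a representing object.

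In the first stage, using (H2) I would put on $t_F:=F(\mathbb{F}[\epsilon])$ its canonical $\mathbb{F}$-vector-space structure as in Definition \ref{def: tangent space}; by (H3) it is finite-dimensional, say of dimension $d$. I would then record the ``torsor'' principle for small extensions: if $\pi:A'\to A$ is small one checks $A'\times_A A'\cong\mathbb{F}[\epsilon]\times_{\mathbb{F}}A'$ as rings, so (H2) identifies $F(A'\times_A A')$ with $t_F\times F(A')$, and comparing the two projections $A'\times_A A'\to A'$ yields a natural action of $t_F$ on $F(A')$ lying over $F(A)$. Applied with $B=A$, $\alpha=\beta=\pi$, condition (H1) makes this action transitive on each non-empty fibre of $F(A')\to F(A)$, and (H4) makes it free there; thus such fibres are torsors under $t_F$. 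This is the one place where (H4) is used.

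In the second stage I would construct a versal hull. Let $P:=W(\mathbb{F})\llbracket t_1,\dots,t_d\rrbracket$, with maximal ideal $\mathfrak{m}$, and build inductively a descending chain $\mathfrak{m}^2\supseteq J_2\supseteq J_3\supseteq\cdots$ of ideals with $\mathfrak{m}^{q+1}\subseteq J_q$, together with compatible elements $\xi_q\in F(P/J_q)$, where $J_q$ is chosen to be the \emph{smallest} ideal $J$ with $\mathfrak{m}^{q+1}\subseteq J\subseteq J_{q-1}$ such that $\xi_{q-1}$ lifts to $F(P/J)$. Existence of a smallest such ideal is exactly where (H1) is needed: from $P/(J\cap J')=(P/J)\times_{P/(J+J')}(P/J')$ one factors $P/J\to P/(J+J')$ into a chain of small extensions and applies (H1) along it to descend a common lift of $\xi_{q-1}$ to $F(P/(J\cap J'))$. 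At the initial stage I would also arrange that $\xi_2$ induces an isomorphism on tangent spaces, which is possible because $P$, being a power series ring in $d$ variables over $W(\mathbb{F})$, has $d$-dimensional relative tangent space, matching $\dim_{\mathbb{F}}t_F$. Then $R:=\varprojlim_q P/J_q$ is a Noetherian object of $\textbf{CNL}$ (as a quotient of $P$) and $\xi:=(\xi_q)\in\varprojlim_q F(P/J_q)=F(R)$ by continuity of $F$.

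Finally, in the third stage I would show that the morphism $\theta:\mathrm{Hom}_{\textbf{CNL}}(R,-)\to F$ induced by $\xi$ is an isomorphism. It is bijective on tangent spaces by the choice of $J_2$ (which imposed only relations of order $\geq2$, so $\mathrm{Hom}_{\textbf{CNL}}(R,\mathbb{F}[\epsilon])$ has dimension $d$ and maps isomorphically to $t_F$). For formal smoothness, given a small $\pi:A'\to A$ and a pair $(u:R\to A,\ v\in F(A'))$ agreeing over $F(A)$, I would lift $P\to R\xrightarrow{u}A$ along the formally smooth $W(\mathbb{F})\to P$ to a $W(\mathbb{F})$-algebra map $\tilde u:P\to A'$, modify $\tilde u$ by a tangent vector in $t_F\cong t_R$ (using the torsor structure of the first stage) so that it carries $\xi$ to $v$, and then verify that the modified map kills $J_q$ for $q$ large and hence descends to $R$. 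This descent, forced by the minimality built into the $J_q$, is the step I expect to be the main obstacle, and it requires some care about which Artinian truncation of $R$ one works with and about the $\mathfrak{m}$-adic behaviour of the chain $J_q$. Once $\theta$ is formally smooth and an isomorphism on tangent spaces, the freeness half of the torsor statement — that is, (H4) — makes each $\theta_A$ injective by induction on the length of $A$; hence $\theta$ is an isomorphism and $F$ is represented by $R$.
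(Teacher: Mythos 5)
The paper states Schlessinger's criterion without proof, citing \cite{Schlessinger}, so there is no in-paper argument to measure you against; what you have written is, in outline, exactly Schlessinger's original proof, and the outline is sound. Your easy direction is correct, your torsor analysis of small extensions (H2 for the group structure, H1 for transitivity, H4 for freeness) is the right mechanism, and the inductive construction of the minimal ideals $J_q$ inside $P=W(\mathbb{F})\llbracket t_1,\dots,t_d\rrbracket$, with H1 guaranteeing that the set of admissible ideals is closed under intersection via $P/(J\cap J')\cong P/J\times_{P/(J+J')}P/J'$, is precisely how the hull is built. The one substantive caveat is the step you yourself flag: in the versality verification your order of operations (lift $P\to A'$, adjust by a tangent vector, then check the result kills $J_q$) is backwards relative to the argument that actually closes — one first reduces to showing that \emph{some} lift of $R\to A$ along the small map $A'\to A$ exists, which is where the minimality of $J_{q+1}$ inside $J_q$ and another application of H1 (to the fibre product $P/J_q\times_A A'$, suitably truncated) are used, and only afterwards does one use transitivity of the $t_F$-action together with surjectivity of $t_R\to t_F$ to correct the lift so that it carries $\xi$ to $v$; adjusting before descending does not obviously preserve the property of killing $J_q$. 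Since you explicitly identified this as the main obstacle rather than claiming it done, I would call this an honest incomplete step in an otherwise correct reconstruction of the standard proof, not a wrong approach.
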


\begin{remark}
\begin{itemize}
\item[(1)] The functor $F: \textbf{CNL} \rightarrow \textbf{SET}$ is called \textit{nearly representable}, if it satisfies the first three conditions of Schlessinger's criterion. Note that if $F$ is nearly representable, then its tangent space $F(\mathbb{F}[\epsilon])$ has a natural $\mathbb{F}$-vector space structure.
\item[(2)] Let $G$ satisfy the $p$-finiteness condition and let $\overline{\rho}: G \rightarrow GL_n(\mathbb{F})$ be a representation. Then, $D_{\overline{\rho}}$ is nearly representable.
\end{itemize}
\end{remark}

Now, we are ready to study an example briefly:

\begin{example} \label{ex: lifting of unramified character}
Let $p \neq \ell$ be two prime numbers and $F$ be a finite extension of $\mathbb{Q}_{\ell}$ with residue field $k_F$ such that $| k_F | = q =\ell^m$ for some positive integer $m$. Also, let $\mathcal{O}$ be the ring of integers of some finite extension of $\mathbb{Q}_p$ with residue field $\mathbb{F}$ and let $\overline{\rho}: G_F \rightarrow \mathbb{F}^{\times}$ be an unramified character. Hence, $\overline{\rho}$ factors through $G_{k_F}$ which we denote it again by $\overline{\rho}$:
\begin{equation*}
\begin{tikzcd}
G_F \arrow[rd, "\overline{\rho}"] \arrow[r] & G_{k_F} \arrow[d, "\overline{\rho}"] \\
& \mathbb{F}^{\times}
\end{tikzcd}.
\end{equation*}
Since $G_{k_F}$ is topologically generated by \textit{Frobenius element} $\phi$, $\overline{\rho}: G_{k_F} \rightarrow \mathbb{F}^{\times}$ is completely determind by $\overline{\rho}(\phi)=\overline{a} \in \mathbb{F}^{\times}$. Let us lift $\overline{a}$ to an arbitrary $a \in \mathcal{O}^{\times}$. Consider the framed deformation functor:
\begin{align*}
D_{\overline{\rho}, \mathcal{O}}^{\square}: & \textbf{CNL}_{\mathcal{O}} \rightarrow \textbf{SET} \\
& A \mapsto \text{\{lifts of} \: \overline{\rho} \: \text{to} \: A \: \in \textbf{CNL}_{\mathcal{O}}\}.
\end{align*}
Note that in the definition of $D_{\overline{\rho}, \mathcal{O}}^{\square}$, the domain of $\overline{\rho}$ is all of $G_F$, since lifts of $\overline{\rho}$ may have ramifications and do not factor through $G_{k_F}$.
\

Let $P_F$ denote the \textit{wild inertia} subgroup of $G_F$. Then, since $\overline{\rho}$ is unramified, $\rho(P_F) \subseteq 1+ \mathfrak{m}_A$ for any lift $\rho$ of $\overline{\rho}$. But $P_F$ is a pro-$\ell$-group and $1+ \mathfrak{m}_A$ is a pro-$p$-group, and as $p \neq \ell$, $\rho(P_F)$ is trivial. So, any lift $\rho$ of $\overline{\rho}$ factors through $G_F/P_F$ which we denote it again by $\rho$:
\begin{equation*}
\begin{tikzcd}
G_F \arrow[rd, "\rho"] \arrow[r] & G_F/P_F \arrow[d, "\rho"] \\
& A
\end{tikzcd}.
\end{equation*}
Recall that $G_F/P_F \simeq \hat{\mathbb{Z}}^{(\ell)}(1) \rtimes \hat{\mathbb{Z}}$ is the Galois group of the maximal tame extension of $F$ (Note that $M(1)$ is the \textit{Tate twist} of $M$, and $\hat{\mathbb{Z}}^{(\ell)}=\prod \mathbb{Z}_{p^{\prime}}$ where $p^{\prime}$ runs over all prime numbers different from $\ell$). Let $\hat{\mathbb{Z}}^{(\ell)}$ be (topologically) generated by (the image of) $\tau$ which is called the tame generator. Then, we have $\phi \tau \phi^{-1}=\tau^q$. Let $R_{\overline{\rho}}^{\square}$ be the universal lifting ring and $\rho^{univ}$ be the universal lift. Since $G_F/P_F$ has two generators $\phi$ and $\tau$ with the relation $\phi \tau \phi^{-1}=\tau^q$, we should have $R_{\overline{\rho}}^{\square}=\mathcal{O}\llbracket X,Y\rrbracket /J$, where the ideal $J$ consists of relations. Thus, we get:
\begin{align*}
\rho^{univ}: & G_F/P_F \rightarrow (R_{\overline{\rho}}^{\square})^{\times}=(\mathcal{O}\llbracket X,Y\rrbracket /J)^{\times} \\
& \phi \mapsto a+X \\
& \tau \mapsto 1+Y
\end{align*}
and since $\phi \tau \phi^{-1}=\tau^q$, we get $(1+Y)^{q-1}=1$. Now, we have two cases:
\begin{itemize}
\item[(1)] $p \nmid q-1$: in this case $Y=0$ and $R_{\overline{\rho}}^{\square}=\mathcal{O}\llbracket X\rrbracket$.
\item[(2)] $p^{t} \parallel q-1$: in this case $R_{\overline{\rho}}^{\square}=\mathcal{O}[\mathbb{Z}/p^{t}\mathbb{Z}]\llbracket X\rrbracket$.
\end{itemize}
\end{example}

\subsubsection{Tangent Space}

We have seen the definition of the tangent space of the (framed) deformation functor, \eqref{def: tangent space}. Now, we want to interpret it in terms of \textit{group cohomology}.

\begin{definition}
Let $\overline{\rho}: G \rightarrow GL_n(\mathbb{F}) $ be a representation. Let $ad(\overline{\rho})$ denote $M_n(\mathbb{F})$ with the adjoint $G$-action, i.e. for $\sigma \in G$ and $M \in ad(\overline{\rho})$, we have $\sigma . M = \overline{\rho}(\sigma) M \overline{\rho}(\sigma)^{-1}$.
\end{definition}

Let $Z^1(G, ad(\overline{\rho}))$ denote the space of $1$-cocycles with coefficients in $ad(\overline{\rho})$.

\begin{proposition} \label{prop: tangent space and cohomology}
Let $\mathbb{F}$ be a finite extension of $\mathbb{F}_p$ and $\overline{\rho}: G \rightarrow GL_n(\mathbb{F}) $ be a representation. For the tangent spaces of $D_{\overline{\rho}}^{\square}$ and $D_{\overline{\rho}}$  we have the following:
\begin{itemize}
\item[(1)] $D_{\overline{\rho}}^{\square}(\mathbb{F}[\epsilon]) \simeq (\mathfrak{m}_{R_{\overline{\rho}}^{\square}}/(\mathfrak{m}^{2}_{R_{\overline{\rho}}^{\square}}, p))^{\vee} \simeq Z^1(G, ad(\overline{\rho}))$.
\item[(2)] If $D_{\overline{\rho}}$ be representable, then $D_{\overline{\rho}}(\mathbb{F}[\epsilon]) \simeq (\mathfrak{m}_{R_{\overline{\rho}}}/(\mathfrak{m}^{2}_{R_{\overline{\rho}}}, p))^{\vee} \simeq H^1(G, ad(\overline{\rho}))$.
\end{itemize} 
\end{proposition}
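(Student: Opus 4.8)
The plan is to prove the two isomorphisms in each part: first the ``algebraic'' one relating the tangent space to the cotangent space of the representing ring, and then the ``cohomological'' one relating it to (co)cycles. I would begin with the cohomological description, since it is the heart of the matter, and then obtain the algebraic one as a formal consequence of representability.

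First, for any lift $\rho$ of $\overline{\rho}$ to $\mathbb{F}[\epsilon]$, I would write $\rho(\sigma) = (1 + \epsilon\, c(\sigma))\,\overline{\rho}(\sigma)$ for a unique function $c: G \to M_n(\mathbb{F})$, using that $\rho \equiv \overline{\rho} \pmod{\epsilon}$ and that $\epsilon^2 = 0$. Expanding the homomorphism condition $\rho(\sigma\tau) = \rho(\sigma)\rho(\tau)$ and using $\epsilon^2 = 0$ together with the definition of the adjoint action gives precisely the $1$-cocycle relation $c(\sigma\tau) = c(\sigma) + \sigma \cdot c(\tau)$, where $\sigma \cdot M = \overline{\rho}(\sigma) M \overline{\rho}(\sigma)^{-1}$; continuity of $\rho$ translates into continuity of $c$, so we get a well-defined map $D_{\overline{\rho}}^{\square}(\mathbb{F}[\epsilon]) \to Z^1(G, ad(\overline{\rho}))$, and conversely every continuous $1$-cocycle $c$ yields a lift by the same formula, giving a bijection. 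For part (2), I would then check that two lifts $\rho = (1+\epsilon c)\overline{\rho}$ and $\rho' = (1+\epsilon c')\overline{\rho}$ are strictly equivalent — conjugate by some $g = 1 + \epsilon M \in 1 + M_n(\mathfrak{m}_{\mathbb{F}[\epsilon]})$ — if and only if $c - c' = \sigma \mapsto \sigma\cdot M - M$, i.e.\ $c - c'$ is a $1$-coboundary; hence passing to strict equivalence classes gives $D_{\overline{\rho}}(\mathbb{F}[\epsilon]) \simeq Z^1/B^1 = H^1(G, ad(\overline{\rho}))$. One must keep track that these bijections are in fact $\mathbb{F}$-linear with respect to the vector space structure on the tangent space from Definition \ref{def: tangent space}; this follows by unwinding the definition of the addition map $\psi$ and the scalar action, matching them term-by-term with addition and scalar multiplication of cocycles.

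For the algebraic isomorphism, I would use representability: a lift of $\overline{\rho}$ to $\mathbb{F}[\epsilon]$ corresponds to a \textbf{CNL}-morphism $R_{\overline{\rho}}^{\square} \to \mathbb{F}[\epsilon]$ (respectively $R_{\overline{\rho}} \to \mathbb{F}[\epsilon]$ for deformations). Since such a morphism is local and induces the identity on residue fields, it is determined by where it sends $\mathfrak{m}_{R}$, it kills $\mathfrak{m}_R^2$ (as $\mathfrak{m}_{\mathbb{F}[\epsilon]}^2 = 0$), and it kills $p$ (as $\mathbb{F}[\epsilon]$ has characteristic $p$); thus it factors through an $\mathbb{F}$-linear map $\mathfrak{m}_R/(\mathfrak{m}_R^2, p) \to \epsilon\mathbb{F} \simeq \mathbb{F}$, and conversely any such linear functional extends uniquely to a \textbf{CNL}-morphism. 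This identifies $D^{\square}_{\overline{\rho}}(\mathbb{F}[\epsilon])$ with $\operatorname{Hom}_{\mathbb{F}}(\mathfrak{m}_R/(\mathfrak{m}_R^2, p), \mathbb{F}) = (\mathfrak{m}_R/(\mathfrak{m}_R^2, p))^{\vee}$, and likewise in the deformation case; the $p$-finiteness hypothesis guarantees $R_{\overline{\rho}}^{\square}$ (and $R_{\overline{\rho}}$, when it exists) is Noetherian so this dual is finite-dimensional.

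The main obstacle, or rather the point requiring the most care, is verifying that the two bijections are compatible with the $\mathbb{F}$-vector space structures — in particular that the somewhat abstract vector space structure on $F(\mathbb{F}[\epsilon])$ defined via the fiber-product maps $\psi$ and the scalar map matches the obvious linear structures on $Z^1$, on $H^1$, and on the cotangent-space dual. This is a diagram-chase through Definition \ref{def: tangent space}: one computes $F(\mathbb{F}[\epsilon] \times_{\mathbb{F}} \mathbb{F}[\epsilon])$ explicitly, notes that $\mathbb{F}[\epsilon]\times_{\mathbb{F}}\mathbb{F}[\epsilon] \simeq \mathbb{F}[\epsilon_1,\epsilon_2]/(\epsilon_1,\epsilon_2)^2$, and checks that the lift $(1 + \epsilon_1 c_1 + \epsilon_2 c_2)\overline{\rho}$ maps under $\psi$ to $(1 + \epsilon(c_1+c_2))\overline{\rho}$. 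The scalar multiplication is handled the same way. Everything else — continuity bookkeeping, the coboundary computation, the characteristic-$p$ and $\mathfrak{m}^2$ vanishing — is routine once these identifications are set up.
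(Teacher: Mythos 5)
The paper states this proposition without proof, and your argument is the standard one (going back to Mazur's original treatment): the computation $\rho(\sigma)=(1+\epsilon\,c(\sigma))\overline{\rho}(\sigma)$ yielding the cocycle condition, the identification of strict equivalence with coboundaries, and the identification of $\textbf{CNL}$-morphisms $R\to\mathbb{F}[\epsilon]$ with $\mathbb{F}$-linear functionals on $\mathfrak{m}_R/(\mathfrak{m}_R^2,p)$ are all correct as you set them up. You also rightly isolate the only point needing genuine care, namely the compatibility of the fiber-product vector-space structure on $F(\mathbb{F}[\epsilon])$ with the evident linear structures on $Z^1$, $H^1$, and the cotangent dual, so the proposal is complete.
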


\begin{corollary}
If $G$ satisfies the $p$-finiteness condition, then $D_{\overline{\rho}}(\mathbb{F}[\epsilon])$ is a finite dimensional $\mathbb{F}$-vector space.
\end{corollary}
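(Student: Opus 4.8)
The plan is to deduce the statement from the cohomological description of the tangent space in Proposition~\ref{prop: tangent space and cohomology}, using the $p$-finiteness condition (Definition~\ref{def: p-finiteness condition}) at exactly one point. All the substantive content is the finiteness of $H^1(G, ad(\overline{\rho}))$; everything else is a formal reduction. Since the corollary does not assume that $D_{\overline{\rho}}$ is representable, I would not invoke part~(2) of Proposition~\ref{prop: tangent space and cohomology} directly, but instead pass through the framed functor, which is always available.

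First I would note that the forgetful natural transformation from the lifting functor $D_{\overline{\rho}}^{\square}$ to the deformation functor $D_{\overline{\rho}}$ induces a surjection of $\mathbb{F}$-vector spaces $D_{\overline{\rho}}^{\square}(\mathbb{F}[\epsilon]) \to D_{\overline{\rho}}(\mathbb{F}[\epsilon])$: surjectivity is clear because every deformation to $\mathbb{F}[\epsilon]$ is represented by some lift, and $\mathbb{F}$-linearity follows because the vector-space structures of Definition~\ref{def: tangent space} are defined functorially. Hence it suffices to show $\dim_{\mathbb{F}} D_{\overline{\rho}}^{\square}(\mathbb{F}[\epsilon]) < \infty$, which by part~(1) of Proposition~\ref{prop: tangent space and cohomology} equals $\dim_{\mathbb{F}} Z^1(G, ad(\overline{\rho}))$ (continuous cocycles). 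From the tautological exact sequence $0 \to ad(\overline{\rho})/ad(\overline{\rho})^{G} \to Z^1(G, ad(\overline{\rho})) \to H^1(G, ad(\overline{\rho})) \to 0$ together with the fact that $ad(\overline{\rho}) = M_n(\mathbb{F})$ is finite, we are reduced to proving that $H^1(G, ad(\overline{\rho}))$ is finite. (Equivalently one can observe, as in the proof of Proposition~\ref{prop: tangent space and cohomology}(2), that $D_{\overline{\rho}}(\mathbb{F}[\epsilon]) \simeq Z^1(G,ad(\overline{\rho}))/B^1(G,ad(\overline{\rho})) = H^1(G, ad(\overline{\rho}))$ even without representability, strict equivalence corresponding precisely to adding a coboundary.)

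For the core step, set $G_0 := \Ker\big(G \xrightarrow{\ \overline{\rho}\ } GL_n(\mathbb{F})\big)$. As $GL_n(\mathbb{F})$ is finite and $\overline{\rho}$ is continuous, $G_0$ is an open normal subgroup of finite index, and $G_0$ acts trivially on $ad(\overline{\rho})$. The inflation--restriction exact sequence gives
\[
0 \to H^1(G/G_0,\, ad(\overline{\rho})^{G_0}) \to H^1(G,\, ad(\overline{\rho})) \to H^1(G_0,\, ad(\overline{\rho}))^{G/G_0}.
\]
The left-hand term is the cohomology of the finite group $G/G_0$ with values in the finite module $ad(\overline{\rho})$, hence finite. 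For the right-hand term, triviality of the $G_0$-action gives $H^1(G_0, ad(\overline{\rho})) = \Hom{\text{cont}}(G_0, ad(\overline{\rho}))$; fixing an $\mathbb{F}$-basis identifies $ad(\overline{\rho})$ with $(\mathbb{Z}/p\mathbb{Z})^{\,n^2[\mathbb{F}:\mathbb{F}_p]}$ as a topological abelian group, so $\Hom{\text{cont}}(G_0, ad(\overline{\rho})) \cong \Hom{\text{cont}}(G_0, \mathbb{Z}/p\mathbb{Z})^{\,n^2[\mathbb{F}:\mathbb{F}_p]}$, which is finite because $G_0$ is an open subgroup of $G$ and $G$ satisfies the $p$-finiteness condition. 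Being squeezed between two finite groups, $H^1(G, ad(\overline{\rho}))$ is finite, hence so are $Z^1(G, ad(\overline{\rho}))$ and $D_{\overline{\rho}}(\mathbb{F}[\epsilon])$.

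I do not expect a genuine obstacle: once Proposition~\ref{prop: tangent space and cohomology} is granted the argument is formal, and the only thing requiring real care is bookkeeping with continuity --- that the cocycles, cochains, and $\Hom$ groups involved are the continuous ones, and that inflation--restriction is valid in the profinite setting with these finite discrete coefficients. As an alternative to the cohomological route one could invoke Theorem~\ref{th: representability of liftng and deformation}(1): $D_{\overline{\rho}}^{\square}$ is representable by $R_{\overline{\rho}}^{\square}$, which is Noetherian precisely because $G$ satisfies the $p$-finiteness condition, so $\mathfrak{m}_{R_{\overline{\rho}}^{\square}}/(\mathfrak{m}_{R_{\overline{\rho}}^{\square}}^{2}, p)$ is finite-dimensional, and one again concludes via the surjection $D_{\overline{\rho}}^{\square}(\mathbb{F}[\epsilon]) \to D_{\overline{\rho}}(\mathbb{F}[\epsilon])$.
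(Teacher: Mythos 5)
Your proof is correct, and it is the standard argument that the paper leaves implicit (the corollary is stated there without proof): reduce to the finiteness of $H^1(G, ad(\overline{\rho}))$ via Proposition~\ref{prop: tangent space and cohomology}, then apply inflation--restriction for the open normal subgroup $G_0=\Ker(\overline{\rho})$, using finiteness of $H^1$ of the finite quotient and the $p$-finiteness condition to bound $\Hom{\mathrm{cont}}(G_0,\mathbb{Z}/p\mathbb{Z})$. All the continuity bookkeeping you flag is handled correctly, so there is nothing to add.
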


\begin{proposition} \label{prop: generator and relation for universal deformation ring}
Let $D_{\overline{\rho}}$ be representable. Also, let $r=\dim_{\mathbb{F}}H^1(G, ad(\overline{\rho}))$ and $s=\dim_{\mathbb{F}}H^2(G, ad(\overline{\rho}))$. Then, $R_{\overline{\rho}} \simeq W(\mathbb{F})\llbracket X_1, \cdots , X_r\rrbracket /(f_1, \cdots , f_s)$, where $f_i$ are power series in $W(\mathbb{F})\llbracket X_1, \cdots , X_r\rrbracket$.
\end{proposition}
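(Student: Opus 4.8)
The plan is to realise $R_{\overline{\rho}}$ as a quotient of the power-series ring $A:=W(\mathbb{F})\llbracket X_1,\dots,X_r\rrbracket$ and to bound the number of relations by $\dim_{\mathbb{F}}H^2(G,ad(\overline{\rho}))$ via obstruction theory. First I would build the surjection. By Proposition~\ref{prop: tangent space and cohomology}(2), the mod-$p$ cotangent space $\mathfrak{m}_{R_{\overline{\rho}}}/(\mathfrak{m}_{R_{\overline{\rho}}}^2,p)$ is $\mathbb{F}$-dual to $H^1(G,ad(\overline{\rho}))$, hence $r$-dimensional; choosing $x_1,\dots,x_r\in\mathfrak{m}_{R_{\overline{\rho}}}$ lifting a basis and sending $X_i\mapsto x_i$ gives a continuous $\textbf{CNL}$-morphism $A\to R_{\overline{\rho}}$, which by completeness and Nakayama's lemma has dense image, hence (both rings being profinite) is surjective. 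Let $J$ be its kernel. By Nakayama again, $J$ is generated by $\dim_{\mathbb{F}}(J/\mathfrak{m}_AJ)$ elements, so — after padding a minimal generating set with zeros — it suffices to show
\[
\dim_{\mathbb{F}}(J/\mathfrak{m}_AJ)\ \le\ s=\dim_{\mathbb{F}}H^2(G,ad(\overline{\rho})).
\]

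The second step is obstruction theory. Set $I:=J/\mathfrak{m}_AJ$, a finite-dimensional $\mathbb{F}$-vector space, and $R_1:=A/\mathfrak{m}_AJ$, so that $R_1\twoheadrightarrow R_{\overline{\rho}}$ has kernel $I$ annihilated by $\mathfrak{m}_{R_1}$. Choosing a continuous set-theoretic lift $\widetilde{\rho}\colon G\to GL_n(R_1)$ of $\rho^{univ}$ — available because $GL_n(R_1)\to GL_n(R_{\overline{\rho}})$ is surjective with finite kernel $1+M_n(I)$ — the failure of $\widetilde{\rho}$ to be a homomorphism is measured by the $2$-cocycle $(g,h)\mapsto\widetilde{\rho}(g)\widetilde{\rho}(h)\widetilde{\rho}(gh)^{-1}-1$ with values in $M_n(I)\cong ad(\overline{\rho})\otimes_{\mathbb{F}}I$ (the $G$-action being through $ad(\overline{\rho})$ only, since $\mathfrak{m}_{R_1}I=0$), whose class
\[
\mathfrak{o}\in H^2(G,ad(\overline{\rho})\otimes_{\mathbb{F}}I)\cong H^2(G,ad(\overline{\rho}))\otimes_{\mathbb{F}}I\cong\mathrm{Hom}_{\mathbb{F}}\bigl(I^{\vee},H^2(G,ad(\overline{\rho}))\bigr)
\]
is independent of $\widetilde{\rho}$, and whose image under any $\lambda\in I^{\vee}$ is the obstruction to lifting $\rho^{univ}$ over the corresponding small extension of $R_{\overline{\rho}}$. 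The key claim is that the resulting map $\mathfrak{o}\colon I^{\vee}\to H^2(G,ad(\overline{\rho}))$ is \emph{injective}; granting this, $\dim_{\mathbb{F}}I=\dim_{\mathbb{F}}I^{\vee}\le s$ and we are done.

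To prove injectivity I would argue by contradiction: suppose $\lambda\in I^{\vee}\setminus\{0\}$ with $\mathfrak{o}(\lambda)=0$. Let $J_\lambda\subseteq A$ be the preimage of $\ker\lambda\subseteq I=J/\mathfrak{m}_AJ$, so that $\mathfrak{m}_AJ\subseteq J_\lambda\subsetneq J$ and $0\to\mathbb{F}\to A/J_\lambda\xrightarrow{q}R_{\overline{\rho}}\to 0$ is a small extension. Its obstruction class for $\rho^{univ}$ is $\mathfrak{o}(\lambda)=0$, so $\rho^{univ}$ lifts to a deformation $\rho_\lambda\colon G\to GL_n(A/J_\lambda)$; by the universal property of $(R_{\overline{\rho}},\rho^{univ})$ this is induced by a $\textbf{CNL}$-morphism $g\colon R_{\overline{\rho}}\to A/J_\lambda$, and since $q\circ g$ then classifies $\rho^{univ}$ it must equal the identity. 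Hence $q$ splits and $A/J_\lambda$ is the trivial square-zero extension $R_{\overline{\rho}}\oplus\mathbb{F}\epsilon$; reducing mod $p$, its mod-$p$ cotangent space has dimension $r+1$, contradicting the fact that $A/J_\lambda$ is a quotient of the $r$-variable power-series ring $A$, all of whose quotients have mod-$p$ cotangent dimension at most $r$. This contradiction proves the claim, and with it the proposition.

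I expect the main obstacle to be exactly this injectivity argument, together with the bookkeeping needed to set up $\mathfrak{o}$ cleanly: the existence of a continuous set-theoretic lift, the verification that the associated $2$-cochain is a cocycle whose class is well defined and functorial in quotients of $I$, and the identification $H^2(G,ad(\overline{\rho})\otimes_{\mathbb{F}}I)\cong H^2(G,ad(\overline{\rho}))\otimes_{\mathbb{F}}I$ (valid since $I$ carries the trivial $G$-action and cohomology commutes with finite direct sums). Everything else — the tangent-space computation, the two applications of Nakayama's lemma, and padding a generating set with zeros to obtain exactly $s$ relations — is routine. An alternative would be to extract the presentation from a Schlessinger-style deformation-functor analysis in the manner of Mazur, but the obstruction-theoretic route above is the most transparent.
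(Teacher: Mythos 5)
The paper states this proposition without giving any proof, so there is nothing to compare against; your argument is the standard Mazur-style obstruction-theoretic proof and it is correct. The two points that carry the weight --- the surjection $W(\mathbb{F})\llbracket X_1,\dots,X_r\rrbracket\twoheadrightarrow R_{\overline{\rho}}$ from the cotangent-space computation, and the injectivity of the obstruction map $I^{\vee}\to H^2(G,ad(\overline{\rho}))$ via the splitting of the small extension $A/J_\lambda\to R_{\overline{\rho}}$ forcing a cotangent dimension of $r+1$ --- are both handled correctly, and the remaining issues you flag (continuity of the set-theoretic lift, which holds for surjections of profinite groups; finite-dimensionality of $I=J/\mathfrak{m}_AJ$, which follows from Noetherianity of $A$; padding to exactly $s$ relations) are genuinely routine.
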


\begin{definition}
If $H^2(G, ad(\overline{\rho}))=0$ (by the above Proposition, it is equivalent to say that $R_{\overline{\rho}}$ is a formal power series ring), we say that the deformation problem is unobstructed.
\end{definition}

\begin{conjecture}{(Mazur \cite{Mazur1})}
Let $K$ be a number field and $S$ be a finite set of places of $K$ containing all places above $p$ and $\infty$ (recall that $p$ is the characteristic of $\mathbb{F}$).
Let $\overline{\rho}: G_{K,S} \rightarrow GL_n({\mathbb{F}})$ be absolutely irreducible (thus $D_{\overline{\rho}}$ is representable).
Then, for $h_i=\dim_{\mathbb{F}}H^i(G_{K,S}, ad(\overline{\rho}))$, we have $\dim R_{\overline{\rho}}=1+h_1-h_2$ (note that the inequality $\dim R_{\overline{\rho}} \geq 1+h_1-h_2$ follows from Proposition \ref{prop: generator and relation for universal deformation ring}).
\end{conjecture}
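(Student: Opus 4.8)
The plan is to split the claimed equality into the elementary lower bound $\dim R_{\overline{\rho}} \ge 1+h_1-h_2$ and the deep upper bound $\dim R_{\overline{\rho}} \le 1+h_1-h_2$, and to reformulate the latter using Galois cohomology so that the only remaining content is the assertion that the standard presentation of $R_{\overline{\rho}}$ wastes no relation.

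\emph{Lower bound.} By Proposition \ref{prop: generator and relation for universal deformation ring} we may write $R_{\overline{\rho}} \cong W(\mathbb{F})\llbracket X_1,\dots,X_{h_1}\rrbracket /(f_1,\dots,f_{h_2})$. The ring $W(\mathbb{F})\llbracket X_1,\dots,X_{h_1}\rrbracket$ is a regular local ring of Krull dimension $1+h_1$, the extra $1$ coming from $p$ being a nonunit in $W(\mathbb{F})$, and by Krull's principal ideal theorem each of the $h_2$ relations cuts the dimension down by at most $1$. Hence $\dim R_{\overline{\rho}} \ge (1+h_1)-h_2$, which is the inequality quoted in the statement. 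Moreover, since $W(\mathbb{F})\llbracket X_1,\dots,X_{h_1}\rrbracket$ is Cohen--Macaulay, equality holds if and only if $(f_1,\dots,f_{h_2})$ is a regular sequence, equivalently if and only if $R_{\overline{\rho}}$ is a complete intersection of the predicted Krull dimension.

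\emph{Reformulation of the upper bound.} Since $S$ contains all places above $p$ and $\infty$, it contains every place dividing $\# ad(\overline{\rho}) = p^{n^2}$, so Tate's global Euler characteristic formula applies to the finite $G_{K,S}$-module $ad(\overline{\rho})$ and gives
\[
h_0 - h_1 + h_2 \;=\; \sum_{v \mid \infty} \dim_{\mathbb{F}} H^0(G_v, ad(\overline{\rho})) \;-\; [K:\mathbb{Q}]\cdot n^2 .
\]
Because $\overline{\rho}$ is absolutely irreducible, Schur's lemma yields $h_0 = \dim_{\mathbb{F}} \operatorname{End}_{\mathbb{F}[G_{K,S}]}(\overline{\rho}) = 1$, so
\[
1 + h_1 - h_2 \;=\; 2 \;-\; \sum_{v \mid \infty} \dim_{\mathbb{F}} H^0(G_v, ad(\overline{\rho})) \;+\; [K:\mathbb{Q}]\cdot n^2 ,
\]
an explicit integer depending only on $n$, on $[K:\mathbb{Q}]$, and on the conjugacy classes of the images of the complex conjugations at the real places (which determine the $H^0(G_v,-)$ there). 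Thus the conjecture becomes the statement that $\dim R_{\overline{\rho}}$ equals this explicit number.

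\emph{The hard step.} It remains to produce the matching bound $\dim R_{\overline{\rho}} \le 1+h_1-h_2$, and this is precisely where no purely Galois-cohomological argument is known, which is why the assertion is only a conjecture in general. The one route that works in the cases where it is a theorem runs through the automorphic side: one exhibits a Hecke algebra $\mathbb{T}$ acting on a space of automorphic forms of weight and level matched to $\overline{\rho}$, proves by the Taylor--Wiles patching method (developed later in this chapter) that the natural surjection $R_{\overline{\rho}} \twoheadrightarrow \mathbb{T}$ arising from the associated Galois representations is an isomorphism, and computes $\dim \mathbb{T}$ independently from the automorphic geometry (for instance as the dimension of the relevant Hida family or eigenvariety component). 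The patching argument simultaneously shows that $R_{\overline{\rho}}$ is a complete intersection, which is exactly the ingredient missing from Krull's estimate and forces the equality. I expect the main obstacle to be this step: a general residually absolutely irreducible $\overline{\rho}$ is not known to be automorphic, and even when it is, one must control the local deformation conditions at $p$ and at the ramified primes precisely enough that the patched module has the predicted dimension; absent such an automorphy-plus-local-conditions package, the upper bound remains open.
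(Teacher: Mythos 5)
This statement is a conjecture, and the paper offers no proof of it; the only thing it asserts as provable is the parenthetical inequality $\dim R_{\overline{\rho}} \geq 1+h_1-h_2$, which you establish exactly as intended, by combining the presentation of Proposition~\ref{prop: generator and relation for universal deformation ring} with Krull's principal ideal theorem applied to the regular $(1+h_1)$-dimensional ring $W(\mathbb{F})\llbracket X_1,\dots,X_{h_1}\rrbracket$. Your refusal to claim the reverse inequality is the correct call, and your diagnosis of why it is open matches the standard understanding. What you add beyond the paper is worthwhile: the observation that equality is equivalent to $(f_1,\dots,f_{h_2})$ being a regular sequence (so the conjecture is really the assertion that $R_{\overline{\rho}}$ is a complete intersection of the expected dimension), and the computation of $1+h_1-h_2$ via Tate's global Euler characteristic formula together with $h_0=1$ from Schur's lemma, which turns the conjectural dimension into the explicit quantity $2-\sum_{v\mid\infty}\dim_{\mathbb{F}}H^0(G_v,ad(\overline{\rho}))+[K:\mathbb{Q}]\,n^2$. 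This is in fact how Mazur originally framed the expectation, and it makes transparent that the content of the conjecture is concentrated entirely in the ``no wasted relations'' claim, which, as you say, is currently accessible only through automorphy and patching in special cases.
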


\subsubsection{Deformation Conditions}

We fix a representation $\overline{\rho}: G \rightarrow GL_n(\mathbb{F})$, like before. We also fix a $\Lambda \in \textbf{CNL}$ and usually assume that $\Lambda=\mathcal{O}$, where $\mathcal{O}$ is the ring of integers of some finite totally ramified extension of $W(\mathbb{F})[1/p]$. Here we are interested to study subfunctors of $D_{\overline{\rho}}$ and $D_{\overline{\rho}}^{\square}$ consisting of deformations or lifts subject to certain conditions.

\subsubsection*{Fixed Determinant Condition}

Let us fix a continuous character $\chi: G \rightarrow \mathcal{O}^{\times}$ that $ \chi \equiv \det (\overline{\rho}) \pmod{\mathfrak{m}_{\mathcal{O}}} $. Let $D_{\overline{\rho}}^{\square, \chi} \subseteq D_{\overline{\rho}}^{\square}: \textbf{CNL}_{\mathcal{O}} \rightarrow \textbf{SET}$ be the subfunctor of lifts of $\overline{\rho}$ with $\det=\chi$, i.e. $\rho \in D_{\overline{\rho}}^{\square}(A)$ is an element of $D_{\overline{\rho}}^{\square, \chi}(A)$ if and only if $\det \rho=\imath \circ \chi$ where $\imath: \mathcal{O} \rightarrow A$ is the structure map, or equivalently we have the following commutative diagram:
\begin{equation*}
\begin{tikzcd}
G \arrow[r, "\rho"] \arrow[d, "\chi"]
& GL_n(A) \arrow[d, "\det"] \\
\mathcal{O}^{\times} \arrow[r, "\imath"]
& A^{\times}
\end{tikzcd}.
\end{equation*}
\\

This condition is stable under conjugation by elements of $1+M_n(\mathfrak{m}_A)$, hence we also get a subfunctor $D_{\overline{\rho}}^{\chi} \subseteq D_{\overline{\rho}}: \textbf{CNL}_{\mathcal{O}} \rightarrow \textbf{SET}$.
\\

For the representability of the above subfunctors we have:

\begin{proposition} \label{prop: representability of fixed determinant condition}
\begin{itemize}
\item[(1)] The subfunctor $D_{\overline{\rho}}^{\square, \chi} \subseteq D_{\overline{\rho}}^{\square}$ is representable by a quotient $R_{\overline{\rho}}^{\square, \chi}$ of $R_{\overline{\rho}}^{\square}$.
\item[(2)] If $D_{\overline{\rho}}$ be representable, then the subfunctor $D_{\overline{\rho}}^{\chi} \subseteq D_{\overline{\rho}}$ is representable by a quotient $R_{\overline{\rho}}^{\chi}$ of $R_{\overline{\rho}}$.
\end{itemize}
\end{proposition}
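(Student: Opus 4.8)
The strategy is to construct the desired quotient explicitly from the universal object; I carry out part (1) in detail and then observe that part (2) is formally identical.

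Write $R^{\square}:=R_{\overline{\rho}}^{\square}$, with structure map $\imath\colon\mathcal{O}\to R^{\square}$ (which exists since $R^{\square}\in\textbf{CNL}_{\mathcal{O}}$) and universal lift $\rho^{univ}\colon G\to GL_n(R^{\square})$, so that $D_{\overline{\rho}}^{\square}(A)=\Hom{\textbf{CNL}_{\mathcal{O}}}(R^{\square},A)$ via $f\mapsto GL_n(f)\circ\rho^{univ}$. For each $g\in G$ set $c(g):=\det\bigl(\rho^{univ}(g)\bigr)-\imath\bigl(\chi(g)\bigr)\in R^{\square}$. Since $\rho^{univ}$ reduces to $\overline{\rho}$ and $\chi\equiv\det(\overline{\rho})\pmod{\mathfrak{m}_{\mathcal{O}}}$, the reduction of $c(g)$ modulo $\mathfrak{m}_{R^{\square}}$ is $\det(\overline{\rho}(g))-\det(\overline{\rho}(g))=0$, so $c(g)\in\mathfrak{m}_{R^{\square}}$. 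Let $I\subseteq\mathfrak{m}_{R^{\square}}$ be the ideal generated by $\{c(g):g\in G\}$ and put $R_{\overline{\rho}}^{\square,\chi}:=R^{\square}/I$. Because $G$ satisfies the $p$-finiteness condition, $R^{\square}$ is Noetherian; hence $I$ is finitely generated and, by the Krull intersection theorem in a complete Noetherian local ring, closed, so $R_{\overline{\rho}}^{\square,\chi}$ is again a complete Noetherian local ring with residue field $\mathbb{F}$, i.e.\ an object of $\textbf{CNL}_{\mathcal{O}}$.

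Next I check that this quotient represents the subfunctor. For $A\in\textbf{CNL}_{\mathcal{O}}$ with structure map $\imath_A\colon\mathcal{O}\to A$ and for $f\in\Hom{\textbf{CNL}_{\mathcal{O}}}(R^{\square},A)$ (so $f\circ\imath=\imath_A$), the associated lift $\rho_f=GL_n(f)\circ\rho^{univ}$ satisfies $\det(\rho_f(g))=f(\det\rho^{univ}(g))$. Hence $\det\rho_f=\imath_A\circ\chi$ if and only if $f\bigl(\det\rho^{univ}(g)\bigr)=\imath_A(\chi(g))=f\bigl(\imath(\chi(g))\bigr)$ for all $g\in G$, i.e.\ if and only if $f(c(g))=0$ for all $g$, i.e.\ if and only if $I\subseteq\ker f$; by the universal property of the quotient ring this holds precisely when $f$ factors (uniquely) through the canonical surjection $R^{\square}\to R_{\overline{\rho}}^{\square,\chi}$. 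Thus the natural bijection $\Hom{\textbf{CNL}_{\mathcal{O}}}(R^{\square},A)\simeq D_{\overline{\rho}}^{\square}(A)$ restricts to a natural bijection $\Hom{\textbf{CNL}_{\mathcal{O}}}(R_{\overline{\rho}}^{\square,\chi},A)\simeq D_{\overline{\rho}}^{\square,\chi}(A)$, which proves (1).

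For (2) one runs the identical construction with the universal deformation $\rho^{univ}\colon G\to GL_n(R_{\overline{\rho}})$ in place of the universal lift: the determinant is unchanged under conjugation, hence descends to strict equivalence classes, so $c(g)=\det\rho^{univ}(g)-\imath(\chi(g))$ makes sense; setting $R_{\overline{\rho}}^{\chi}:=R_{\overline{\rho}}/\bigl(c(g):g\in G\bigr)$ and repeating the argument verbatim gives the claim. (Alternatively, since the fixed-determinant condition is stable under $1+M_n(\mathfrak{m}_A)$-conjugation, representability of $D_{\overline{\rho}}^{\chi}$ can be deduced formally once (1) is known.) The only point needing care is the one highlighted above — that $I$ is finitely generated and closed, so that imposing $\det=\chi$ really is cutting out an honest quotient of $R^{\square}$ with no further completion needed; this is exactly where the $p$-finiteness hypothesis, already built into the representability of $D_{\overline{\rho}}^{\square}$ and $D_{\overline{\rho}}$, is used. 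Everything else is a formal manipulation of the universal property.
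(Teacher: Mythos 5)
Your proof is correct: the paper states this proposition without proof (it is standard, cf.\ the cited notes of Allen and Gee), and your construction --- quotienting $R_{\overline{\rho}}^{\square}$ by the ideal generated by $\det\rho^{univ}(g)-\imath(\chi(g))$ and checking via the universal property that maps killing this ideal are exactly those whose associated lift has determinant $\chi$ --- is precisely the standard argument, with the conjugation-invariance of $\det$ correctly invoked to make part (2) go through for deformations. One small quibble: the $p$-finiteness condition is not used at the stage you highlight but earlier, to guarantee that $R_{\overline{\rho}}^{\square}$ is Noetherian at all (objects of $\textbf{CNL}$ are Noetherian by definition, so once representability is granted, closedness of every ideal and completeness of the quotient are automatic).
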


Let $ad^0(\overline{\rho}) \subseteq ad(\overline{\rho})$ denote the subset of matrices with trace $0$. Then, for the tangent space of the above subfunctors we have:

\begin{proposition} \label{prop: tangent space of fixed determinant condition}
\begin{itemize}
\item[(1)] $D_{\overline{\rho}}^{\square, \chi}(\mathbb{F}[\epsilon]) \simeq (\mathfrak{m}_{R_{\overline{\rho}}^{\square, \chi}}/(\mathfrak{m}^{2}_{R_{\overline{\rho}}^{\square, \chi}}, p))^{\vee} \simeq Z^1(G, ad^0(\overline{\rho}))$.
\item[(2)] If $D_{\overline{\rho}}$ (and hence $D_{\overline{\rho}}^{\chi})$) be representable, then $D_{\overline{\rho}}^{\chi}(\mathbb{F}[\epsilon]) \simeq (\mathfrak{m}_{R_{\overline{\rho}}^{\chi}}/(\mathfrak{m}^{2}_{R_{\overline{\rho}}^{\chi}}, p))^{\vee} \simeq \im(Z^1(G, ad^0(\overline{\rho})) \rightarrow H^1(G,ad(\overline{\rho}))) \simeq H^1(G, ad^0(\overline{\rho}))$.
\end{itemize}
\end{proposition}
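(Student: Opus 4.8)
The plan is to read both statements off from the unrestricted case, Proposition~\ref{prop: tangent space and cohomology}, by carrying along the fixed-determinant condition. The first isomorphism in each part is purely formal: by Proposition~\ref{prop: representability of fixed determinant condition} the two functors are representable, by $R_{\overline{\rho}}^{\square,\chi}$ and $R_{\overline{\rho}}^{\chi}$ respectively, so evaluating at $\mathbb{F}[\epsilon]$ gives the set of continuous local $\mathcal{O}$-algebra homomorphisms from that ring to $\mathbb{F}[\epsilon]$ inducing the identity on residue fields; and for any $R\in\textbf{CNL}_{\mathcal{O}}$ such a homomorphism is the same datum as an $\mathbb{F}$-linear functional on $\mathfrak{m}_R/(\mathfrak{m}_R^2,p)$, because it must kill $\mathfrak{m}_R^2$ (as $(\epsilon)^2=0$ in $\mathbb{F}[\epsilon]$) and kill $p$ (as $p=0$ in $\mathbb{F}[\epsilon]$). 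This is literally the computation already made for Proposition~\ref{prop: tangent space and cohomology}, now applied to $R_{\overline{\rho}}^{\square,\chi}$ and $R_{\overline{\rho}}^{\chi}$. So the substance of the proposition is the cohomological identification of the tangent spaces.

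For part (1), I would start from the description in Proposition~\ref{prop: tangent space and cohomology}(1): a lift of $\overline{\rho}$ to $\mathbb{F}[\epsilon]$ has the shape $\rho(g)=(1+\epsilon\,c(g))\overline{\rho}(g)$ with $c\colon G\to ad(\overline{\rho})$, and $\rho$ is a homomorphism precisely when $c\in Z^1(G,ad(\overline{\rho}))$. Then I impose the fixed-determinant condition. Since $\mathbb{F}[\epsilon]$ is an $\mathcal{O}$-algebra through $\mathcal{O}\to\mathbb{F}\hookrightarrow\mathbb{F}[\epsilon]$ and $\chi\equiv\det(\overline{\rho})\pmod{\mathfrak{m}_{\mathcal{O}}}$, the composite $\imath\circ\chi$ is simply $\det(\overline{\rho})$ regarded in $\mathbb{F}[\epsilon]^{\times}$, with no $\epsilon$-part; while, using $\det(1+\epsilon M)=1+\epsilon\,\tr(M)$ in $\mathbb{F}[\epsilon]$, one computes $\det\rho(g)=(1+\epsilon\,\tr c(g))\det\overline{\rho}(g)$. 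Hence $\det\rho=\imath\circ\chi$ holds if and only if $\tr c(g)=0$ for every $g$, i.e. $c$ is valued in $ad^0(\overline{\rho})$ --- which is an $\mathbb{F}[G]$-submodule of $ad(\overline{\rho})$ since the trace is conjugation-invariant. This would give $D_{\overline{\rho}}^{\square,\chi}(\mathbb{F}[\epsilon])\simeq Z^1(G,ad^0(\overline{\rho}))$.

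For part (2), I would first observe that the determinant of a lift is unchanged by conjugation by an element of $1+M_n(\mathfrak{m}_A)$, so a strict equivalence class of lifts has a well-defined determinant, and it lies in $D_{\overline{\rho}}^{\chi}(A)$ exactly when it contains a lift of determinant $\imath\circ\chi$. Consequently $D_{\overline{\rho}}^{\chi}(\mathbb{F}[\epsilon])$ is exactly the image of $D_{\overline{\rho}}^{\square,\chi}(\mathbb{F}[\epsilon])$ under the surjection $D_{\overline{\rho}}^{\square}(\mathbb{F}[\epsilon])\to D_{\overline{\rho}}(\mathbb{F}[\epsilon])$ that forgets strict equivalence, which under the identifications of Proposition~\ref{prop: tangent space and cohomology} is the canonical surjection $Z^1(G,ad(\overline{\rho}))\to H^1(G,ad(\overline{\rho}))$. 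Combined with part (1) this gives $D_{\overline{\rho}}^{\chi}(\mathbb{F}[\epsilon])\simeq\im\bigl(Z^1(G,ad^0(\overline{\rho}))\to H^1(G,ad(\overline{\rho}))\bigr)$, and since that map factors through $H^1(G,ad^0(\overline{\rho}))$ its image equals $\im\bigl(H^1(G,ad^0(\overline{\rho}))\to H^1(G,ad(\overline{\rho}))\bigr)$.

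The remaining, and only delicate, point is to identify this image with $H^1(G,ad^0(\overline{\rho}))$ itself, i.e. to show the map $H^1(G,ad^0(\overline{\rho}))\to H^1(G,ad(\overline{\rho}))$ is injective. I would get this from the short exact sequence of $\mathbb{F}[G]$-modules $0\to ad^0(\overline{\rho})\to ad(\overline{\rho})\xrightarrow{\tr}\mathbb{F}\to 0$ (trivial $G$-action on $\mathbb{F}$): when $p\nmid n$ --- which is the case in the Taylor--Wiles situations treated later, where $n=2$ and $p$ is odd --- the map $M\mapsto\bigl(M-\tfrac{\tr M}{n}I,\ \tfrac{\tr M}{n}\bigr)$ splits it $G$-equivariantly, so $ad(\overline{\rho})\simeq ad^0(\overline{\rho})\oplus\mathbb{F}$ and $H^1(G,ad^0(\overline{\rho}))$ is a direct summand of $H^1(G,ad(\overline{\rho}))$, hence maps isomorphically onto its image. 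Equivalently, in the long exact cohomology sequence the connecting map $H^0(G,\mathbb{F})\to H^1(G,ad^0(\overline{\rho}))$ vanishes because the scalar matrices in $H^0(G,ad(\overline{\rho}))$ already surject onto $H^0(G,\mathbb{F})=\mathbb{F}$ under the trace (multiplication by $n$). I expect this to be the main obstacle, in the sense that it is where the argument is not automatic: if $p\mid n$ and $\overline{\rho}$ is absolutely irreducible then $H^0(G,ad(\overline{\rho}))$ is just the scalars, the trace on it vanishes, the connecting map is injective, and $D_{\overline{\rho}}^{\chi}(\mathbb{F}[\epsilon])$ is a proper subspace of $H^1(G,ad^0(\overline{\rho}))$ --- so the hypothesis $p\nmid n$ should be part of the statement.
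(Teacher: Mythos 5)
Your argument is correct and is the standard one (the paper states this proposition without proof, so there is nothing to diverge from): the dual-numbers computation with $\det(1+\epsilon M)=1+\epsilon\,\tr M$ for part (1), and passage to strict equivalence classes plus the long exact sequence of $0\to ad^0(\overline{\rho})\to ad(\overline{\rho})\xrightarrow{\tr}\mathbb{F}\to 0$ for part (2). Your closing observation is also well taken: the final isomorphism $\im\bigl(Z^1(G,ad^0(\overline{\rho}))\to H^1(G,ad(\overline{\rho}))\bigr)\simeq H^1(G,ad^0(\overline{\rho}))$ genuinely requires $p\nmid n$ (so that the trace sequence splits), a hypothesis the proposition omits but which the paper implicitly supplies by restricting to $GL_2$ with $p>2$ (and later to $p\nmid 2n$).
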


\begin{definition} \label{def: deformation condition}
By a \textit{deformation condition} (or \textit{deformation problem}), we mean a collection $\mathcal{D}$ of lifts $(A, \rho)$ to objects $A \in \textbf{CNL}_{\Lambda}$ satisfying the following properties:
\begin{itemize}
\item[(1)] $(\mathbb{F}, \overline{\rho}) \in \mathcal{D}$.
\item[(2)] If $(A, \rho) \in \mathcal{D}$ and $f: A \rightarrow B$ be a morphism in $\textbf{CNL}_{\Lambda}$, then $(B, f \circ \rho) \in \mathcal{D}$.
\item[(3)] If $A \rightarrow C$ and $B \rightarrow C$ be morphisms in $\textbf{Art}_{\Lambda}$ and if $(A, \rho_A)$
and $(B, \rho_B)$ are elements of $\mathcal{D}$, then $(A \times_C B, \rho_A \times \rho_B) \in \mathcal{D}$.
\item[(4)] If $(A_i, \rho_i)$ is an inverse system of elements of $\mathcal{D}$ and $\varprojlim A_i \in \textbf{CNL}_{\Lambda}$, then $(\varprojlim A_i, \varprojlim \rho_i) \in \mathcal{D}$.
\item[(5)] The collection $\mathcal{D}$ is closed under strict equivalence.
\item[(6)] If $A \hookrightarrow B$ in an injection in $\textbf{CNL}_{\Lambda}$ and $(A, \rho)$ is a lift such that $(B, f \circ \rho) \in \mathcal{D}$, then $(A, \rho) \in \mathcal{D}$.
\end{itemize}
\end{definition}

\begin{proposition} \label{prop: deformation conditions arise in this way}
Let $R_{\overline{\rho}, \Lambda}^{\square} \twoheadrightarrow R$ be a surjection in $\textbf{CNL}_{\Lambda}$ satisfying the following property:
\begin{itemize}
\item[(\textbf{P})] For any lift $\rho: G \rightarrow GL_n(A)$ ($A \in \textbf{CNL}_{\Lambda})$) and any $g \in 1+M_n(\mathfrak{m}_A)$, the map $R_{\overline{\rho}, \Lambda}^{\square} \rightarrow A$ induced by $\rho$ factors through $R$ if and only if the map induced by $g \rho g^{-1}$ factors through $R$.
\end{itemize}
Then, the collection of lifts factor through $R$ form a deformation condition. Moreover, every deformation condition arises in this way.
\end{proposition}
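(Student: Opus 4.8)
The statement has two directions. For the first, write $\pi\colon R_{\overline{\rho},\Lambda}^{\square}\twoheadrightarrow R$, and for a lift $\rho\colon G\to GL_n(A)$ ($A\in\textbf{CNL}_{\Lambda}$) let $c_{\rho}\colon R_{\overline{\rho},\Lambda}^{\square}\to A$ denote its classifying map, so that $c_{f\circ\rho}=f\circ c_{\rho}$ for every morphism $f$ of $\textbf{CNL}_{\Lambda}$. Put $\mathcal{D}_{R}:=\{(A,\rho):\ker\pi\subseteq\ker c_{\rho}\}$, the lifts whose classifying map factors through $R$. The plan is to verify the six conditions of Definition~\ref{def: deformation condition}. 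Condition (1) holds since the residue map $R_{\overline{\rho},\Lambda}^{\square}\to\mathbb{F}$ kills $\ker\pi$; (2) is immediate from $c_{f\circ\rho}=f\circ c_{\rho}$; (4) follows from that identity together with $\pi$ being an epimorphism, so a compatible system of factorizations through $R$ assembles to one over the inverse limit; (3) follows from the universal property of the fibre product in $\textbf{CNL}_{\Lambda}$ and surjectivity of $\pi$, as in the computation preceding Theorem~\ref{th: Grothendieck rep}; (5) is exactly property (\textbf{P}); and (6) holds because an injection $\imath\colon A\hookrightarrow B$ gives $\ker c_{\imath\circ\rho}=\ker(\imath\circ c_{\rho})=\ker c_{\rho}$. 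Hence $\mathcal{D}_{R}$ is a deformation condition.

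For the converse, fix a deformation condition $\mathcal{D}$ and consider the subfunctor $D_{\mathcal{D}}^{\square}\subseteq D_{\overline{\rho},\Lambda}^{\square}$ given by $A\mapsto\{\rho:(A,\rho)\in\mathcal{D}\}$; by conditions (1), (2), (4) it is continuous with $D_{\mathcal{D}}^{\square}(\mathbb{F})$ a singleton. I would show $D_{\mathcal{D}}^{\square}$ representable via Schlessinger's criterion (Theorem~\ref{th: Schlessinger}), using that $D_{\overline{\rho},\Lambda}^{\square}$ is already representable (Theorem~\ref{th: representability of liftng and deformation}(1), which applies because $G$ satisfies the $p$-finiteness condition). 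For a representable functor the comparison map $F(A\times_{C}B)\to F(A)\times_{F(C)}F(B)$ is bijective, so its restriction to $D_{\mathcal{D}}^{\square}$ is injective; thus (H1), (H2), (H4) reduce to surjectivity of this restricted map, which follows from condition (3). Condition (H3) holds because, using conditions (2) and (3), $D_{\mathcal{D}}^{\square}(\mathbb{F}[\epsilon])$ is an $\mathbb{F}$-subspace of $D_{\overline{\rho},\Lambda}^{\square}(\mathbb{F}[\epsilon])\cong Z^{1}(G,ad(\overline{\rho}))$ (Proposition~\ref{prop: tangent space and cohomology}), which is finite dimensional by the $p$-finiteness hypothesis. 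Hence $D_{\mathcal{D}}^{\square}$ is represented by some $R_{\mathcal{D}}$ with universal lift $\rho_{\mathcal{D}}^{univ}$.

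Next I would check that the classifying map $\psi\colon R_{\overline{\rho},\Lambda}^{\square}\to R_{\mathcal{D}}$ of $\rho_{\mathcal{D}}^{univ}$ is surjective, which will exhibit $R:=R_{\mathcal{D}}$ as a quotient of $R_{\overline{\rho},\Lambda}^{\square}$. Set $R'':=\psi(R_{\overline{\rho},\Lambda}^{\square})=R_{\overline{\rho},\Lambda}^{\square}/\ker\psi\in\textbf{CNL}_{\Lambda}$. The matrix entries of $\rho_{\mathcal{D}}^{univ}$ are $\psi$-images of those of the universal framed lift, so $\rho_{\mathcal{D}}^{univ}$ has values in $GL_n(R'')$; let $\rho''$ be its corestriction. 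Condition (4) applied to the system $(R_{\mathcal{D}}/\mathfrak{m}_{R_{\mathcal{D}}}^{i},\,\rho_{\mathcal{D}}^{univ}\bmod\mathfrak{m}_{R_{\mathcal{D}}}^{i})$ gives $(R_{\mathcal{D}},\rho_{\mathcal{D}}^{univ})\in\mathcal{D}$; then condition (6) applied to the injection $R''\hookrightarrow R_{\mathcal{D}}$ gives $(R'',\rho'')\in\mathcal{D}$, so $\rho''$ is classified by a morphism $\phi\colon R_{\mathcal{D}}\to R''$. By uniqueness of classifying maps the composite of $\phi$ followed by the inclusion $R''\hookrightarrow R_{\mathcal{D}}$ is the identity of $R_{\mathcal{D}}$; hence that inclusion is onto, and being injective it is an isomorphism, so $\psi$ is surjective. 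Finally $\mathcal{D}=\mathcal{D}_{R}$: the inclusion $\mathcal{D}\subseteq\mathcal{D}_{R}$ is the universal property of $R_{\mathcal{D}}$, and $\mathcal{D}_{R}\subseteq\mathcal{D}$ holds by condition (2) since any lift factoring through $R_{\mathcal{D}}$ is a pushforward of $\rho_{\mathcal{D}}^{univ}\in\mathcal{D}$; property (\textbf{P}) for $R$ then follows from $\mathcal{D}=\mathcal{D}_{R}$ and condition (5).

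The step I expect to be the real obstacle is in the third paragraph: promoting the purely functorial representability of $D_{\mathcal{D}}^{\square}$ to the ring-level fact that it is represented by an honest quotient of $R_{\overline{\rho},\Lambda}^{\square}$, all the while arguing over the whole of $\textbf{CNL}_{\Lambda}$ and not merely its Artinian objects. It is precisely there that conditions (4) and (6) --- which never enter the Schlessinger verification --- are used in an essential way. An alternative that sidesteps Schlessinger is to take $R:=R_{\overline{\rho},\Lambda}^{\square}/\mathfrak{a}$ with $\mathfrak{a}$ the intersection of the kernels of the classifying maps of all Artinian lifts in $\mathcal{D}$, and then to prove $\mathcal{D}=\mathcal{D}_{R}$ directly; the crux there becomes a commutative-algebra lemma to the effect that this family of kernels --- closed under finite intersection by condition (3) and the construction of fibre products over $\mathbb{F}$ --- is cofinal among the open ideals of $R_{\overline{\rho},\Lambda}^{\square}$ containing $\mathfrak{a}$, combined with condition (6) to reduce to surjective classifying maps.
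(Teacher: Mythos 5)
Your argument is correct, and since the paper states this proposition without proof there is nothing to compare it against; what you give is the standard argument. Both halves check out: in the forward direction each of the six axioms reduces, as you say, to functoriality of classifying maps plus the fact that a surjection is an epimorphism (so factorizations through $R$ are unique and glue over fibre products and inverse limits), with (\textbf{P}) supplying axiom (5) and injectivity of $A\hookrightarrow B$ supplying axiom (6). In the converse direction your verification of (H1)--(H4) is fine --- indeed you actually verify the stronger Mayer--Vietoris condition for \emph{all} maps in $\textbf{Art}_{\Lambda}$, so you could invoke Theorem~\ref{th: Grothendieck rep} directly and skip Schlessinger --- and the corestriction trick in your third paragraph (pass to $R'':=\im(\psi)$, use axioms (4) and (6) to see $(R'',\rho'')\in\mathcal{D}$, then use uniqueness of classifying maps to force $R''=R_{\mathcal{D}}$) is exactly the right way to promote representability to the statement that the representing object is a quotient of $R_{\overline{\rho},\Lambda}^{\square}$; you correctly identify this as the only genuinely nontrivial step. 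Two small remarks: the finiteness in (H3) needs the standing hypothesis that $G$ satisfies the $p$-finiteness condition, which the proposition does not restate but which is in force throughout the paper, and you should note explicitly that $\det\rho''(g)$ is a unit in the local ring $R''$ (its residue is $\det\overline{\rho}(g)\neq 0$) so that $\rho''$ really lands in $GL_n(R'')$; neither point is a gap.
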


\subsubsection{Local Deformation Conditions} 
\label{subsub: local deformation}

Since we will work only with $GL_2$, from now on we restrict ourselves to this case.

\subsubsection*{Ordinary Case}

Let $F$ be a finite extension  of $\mathbb{Q}_p$. Assume that for $\overline{\rho}: G_F \rightarrow GL_2(\mathbb{F})$ we have
$\overline{\rho} = \begin{bmatrix}
\overline{\chi}_1 & *\\
0 & \overline{\chi}_2
\end{bmatrix}$,
where $\overline{\chi}_i: G_F \rightarrow \mathbb{F}^{\times}$ are continuous characters. Also, we denote the \textit{inertia subgroup} of $G_F$ by $I$ and we let $\overline{\rho}(I) \neq 1$ and $\overline{\chi}_1(I)=1$. Fix a continuous character $\delta: I \rightarrow \mathcal{O}^{\times}$. Consider the functor $D_{\overline{\rho}}^{ord}: \textbf{CNL}_{\mathcal{O}} \rightarrow \textbf{SET}$ such that:
\begin{align*}
D_{\overline{\rho}}^{ord}(A) =\text{\{lifts} \: \rho \: & \text{of} \: \overline{\rho} \: \text{to} \: A \in \: \textbf{CNL}_{\mathcal{O}} \: \text{such that} \: \rho \: \text{is strictly equivalent to} \\ &
\begin{bmatrix}
\chi_1 & *\\
0 & \chi_2
\end{bmatrix}
\text{with} \: \chi_{1}\mid_I=1 \: \text{and} \: \chi_{2}\mid_I=\delta \}.
\end{align*}
Then, $D_{\overline{\rho}}^{ord}$ is a deformation condition, called the \textit{ordinary deformation} of $\overline{\rho}$.

\begin{proposition} \label{prop: D^ord is representable and its representating ring}
\begin{itemize}
\item[(1)] $D_{\overline{\rho}}^{ord}$ is representable by a ring $R_{\overline{\rho}}^{ord} \in \textbf{CNL}_{\mathcal{O}}$, which is a quotient of $R_{\overline{\rho}}^{\square}$.
\item[(2)] We have $R_{\overline{\rho}}^{ord} \simeq \mathcal{O}\llbracket X_1, \cdots ,X_r\rrbracket$ with $r=4+[F:\mathbb{Q}_p]$.
\end{itemize}
\end{proposition}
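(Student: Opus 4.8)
The plan is to get (1) from the general dictionary between deformation conditions and quotients of the framed lifting ring, and then to get (2) by constructing the universal ordinary lift explicitly after stripping off the ambiguity of the framing. For (1): as remarked just before the statement, $D_{\overline{\rho}}^{ord}$ is a deformation condition in the sense of Definition \ref{def: deformation condition} --- closure under $\textbf{CNL}_{\mathcal{O}}$-morphisms, fibre products, inverse limits, strict equivalence, and passage to subrings is routine once one notes that ``$\rho$ is strictly equivalent to an upper-triangular lift with $\chi_1\mid_I=1$ and $\chi_2\mid_I=\delta$'' is cut out by the vanishing of certain matrix entries (after conjugation) together with prescribed values of continuous characters on $I$. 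Proposition \ref{prop: deformation conditions arise in this way} then produces a surjection $R_{\overline{\rho},\mathcal{O}}^{\square}\twoheadrightarrow R$ in $\textbf{CNL}_{\mathcal{O}}$ such that a lift $\rho\colon G_F\to GL_2(A)$ lies in $D_{\overline{\rho}}^{ord}(A)$ exactly when the classifying map $R_{\overline{\rho},\mathcal{O}}^{\square}\to A$ factors through $R$; since that surjection is onto, these factorizations are in functorial bijection with $\textbf{CNL}_{\mathcal{O}}$-morphisms $R\to A$, so $D_{\overline{\rho}}^{ord}\simeq\Hom{\textbf{CNL}_{\mathcal{O}}}(R,-)$, and we put $R_{\overline{\rho}}^{ord}:=R$, a quotient of $R_{\overline{\rho},\mathcal{O}}^{\square}$.

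For (2) I would first separate the framing. Let $U^{-}\subseteq GL_2$ be the lower unipotent subgroup, pro-represented by $\mathcal{O}\llbracket Z\rrbracket$, and let $D_{\overline{\rho}}^{\square,B}\subseteq D_{\overline{\rho}}^{\square}$ be the subfunctor of lifts that are \emph{literally} upper-triangular with $\chi_1\mid_I=1$ and $\chi_2\mid_I=\delta$; being cut out by closed conditions on $R_{\overline{\rho},\mathcal{O}}^{\square}$, it is pro-representable by a quotient $R_{\overline{\rho}}^{\square,B}$. I claim that $(u,\rho_B)\mapsto u\rho_B u^{-1}$ gives an isomorphism of functors $U^{-}\times D_{\overline{\rho}}^{\square,B}\xrightarrow{\sim}D_{\overline{\rho}}^{ord}$: surjectivity is the definition of $D_{\overline{\rho}}^{ord}$ (factor the conjugating element of $1+M_2(\mathfrak{m}_A)$ as a lower-unipotent element times a Borel element and absorb the latter into $\rho_B$), while injectivity holds because the hypotheses $\overline{\rho}(I)\neq 1$ and $\overline{\chi}_1(I)=1$ force $\overline{\rho}$ to possess a \emph{unique} $G_F$-stable line whose inertia action is compatible with the ordinary shape (the asymmetry being imposed by $\chi_2\mid_I=\delta$ together with $\chi_1\mid_I=1$), hence every $\rho\in D_{\overline{\rho}}^{ord}(A)$ has a unique $G_F$-stable free rank-one summand carrying the ordinary filtration, which pins down $u$ and $\rho_B$. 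Therefore $R_{\overline{\rho}}^{ord}\simeq R_{\overline{\rho}}^{\square,B}\llbracket Z\rrbracket$, so it suffices to show $R_{\overline{\rho}}^{\square,B}\simeq\mathcal{O}\llbracket X_1,\dots,X_{3+[F:\mathbb{Q}_p]}\rrbracket$.

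To do that, observe that an element of $D_{\overline{\rho}}^{\square,B}(A)$ consists of three independent blocks of data: an unramified character $\chi_1$ lifting $\overline{\chi}_1$ (one variable, namely the lift in $1+\mathfrak{m}_A$ of $\overline{\chi}_1(\mathrm{Frob})$); a character $\chi_2$ lifting $\overline{\chi}_2$ with $\chi_2\mid_I=\delta$ (one variable --- granting, as is the case in applications, that $\delta$ extends to $G_F$, the set of such $\chi_2$ is a torsor under unramified characters lifting the trivial one); and the upper-right entry, which after dividing by $\chi_2$ becomes a cocycle in $Z^1(G_F,A(\eta))$ with $\eta:=\chi_1\chi_2^{-1}$. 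Over the base $\mathcal{O}\llbracket X_1,X_2\rrbracket$ carrying the universal pair $(\chi_1,\chi_2)$, the functor $A\mapsto Z^1(G_F,A(\eta))$ is formally smooth, hence pro-represented by a power series ring in $\dim_{\mathbb{F}}Z^1(G_F,\mathbb{F}(\overline{\eta}))$ further variables. A local Euler characteristic computation gives $\dim_{\mathbb{F}}Z^1(G_F,\mathbb{F}(\overline{\eta}))=1+h^2+[F:\mathbb{Q}_p]$, where $h^2=\dim_{\mathbb{F}}H^2(G_F,\mathbb{F}(\overline{\eta}))=\dim_{\mathbb{F}}H^0(G_F,\mathbb{F}(\overline{\eta}^{-1}\overline{\varepsilon}_p))$, which vanishes under the genericity $\overline{\chi}_1/\overline{\chi}_2\neq\overline{\varepsilon}_p$. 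Hence $R_{\overline{\rho}}^{\square,B}\simeq\mathcal{O}\llbracket X_1,\dots,X_{3+[F:\mathbb{Q}_p]}\rrbracket$ and $R_{\overline{\rho}}^{ord}\simeq R_{\overline{\rho}}^{\square,B}\llbracket Z\rrbracket\simeq\mathcal{O}\llbracket X_1,\dots,X_{4+[F:\mathbb{Q}_p]}\rrbracket$, i.e. $r=4+[F:\mathbb{Q}_p]$.

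The main obstacle is precisely the formal smoothness used for the cocycle block --- equivalently, showing that an ordinary lift always deforms along a small surjection $A\twoheadrightarrow A/J$. Lifting the characters $\chi_1$ and $\chi_2$ is unobstructed, since the pro-$p$ part of $G_F^{\mathrm{ab}}$ is a free $\mathbb{Z}_p$-module of rank $[F:\mathbb{Q}_p]$ times a finite group, none of which obstructs lifting to $(1+\mathfrak{m}_A)$-valued characters; but lifting the extension cocycle $b$ is governed by an obstruction class in $H^2(G_F,J\otimes_A\mathbb{F}(\overline{\eta}))\simeq H^2(G_F,\mathbb{F}(\overline{\eta}))^{\oplus\dim_{\mathbb{F}}J}$, and it is exactly the genericity $\overline{\chi}_1/\overline{\chi}_2\neq\overline{\varepsilon}_p$ --- via local Tate duality --- that forces this to vanish; without it, $R_{\overline{\rho}}^{ord}$ is in general only a complete intersection of the expected relative dimension $4+[F:\mathbb{Q}_p]$ rather than a formal power series ring. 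The only other point needing care is the injectivity in the factorization through $U^{-}$, which is where the precise local hypotheses $\overline{\rho}(I)\neq 1$ and $\overline{\chi}_1(I)=1$ are genuinely used.
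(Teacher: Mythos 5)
The paper states this proposition without proof, so there is nothing to compare against line by line; judged on its own terms, your argument is the standard one and is essentially correct. The decomposition of $D_{\overline{\rho}}^{ord}$ into a one-parameter lower-unipotent framing factor times the functor of literally upper-triangular lifts, and the further splitting of the latter into the two character blocks and the cocycle block, is exactly how this computation is usually organized, and your count $1+1+1+\dim_{\mathbb{F}}Z^1(G_F,\mathbb{F}(\overline{\eta}))=4+[F:\mathbb{Q}_p]$ (valid when $H^2(G_F,\overline{\eta})=0$) is right. Two of your caveats deserve emphasis because they concern the statement itself rather than your proof. First, part (2) as stated is not true without a genericity hypothesis: by local Tate duality $H^2(G_F,\overline{\eta})\simeq H^0(G_F,\overline{\eta}^{-1}(1))^{\vee}$ with $\overline{\eta}=\overline{\chi}_1\overline{\chi}_2^{-1}$, and this need not vanish under the stated hypotheses $\overline{\rho}(I)\neq 1$, $\overline{\chi}_1(I)=1$; indeed in the application in Section 3 one has $\overline{\chi}_2|_{I_p}=\overline{\varepsilon}_p^{-1}$, so $\overline{\eta}$ agrees with $\overline{\varepsilon}_p$ on inertia and the obstruction group vanishes only if the unramified twists differ. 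The paper reintroduces exactly this hypothesis as part of condition (4) of Theorem \ref{th: minimal modularity lifting} (namely $\overline{\chi}_1\overline{\chi}_2^{-1}\neq 1,\overline{\varepsilon}$), so your flag is correct and should be read as a hypothesis missing from the proposition; you should also record the (harmless) assumption that $\delta$ extends to $G_F$, without which the functor is empty. Second, the injectivity of $U^{-}\times D_{\overline{\rho}}^{\square,B}\to D_{\overline{\rho}}^{ord}$, i.e., the uniqueness of the ordinary filtration on a lift, genuinely requires an argument (say by induction on the length of $A$, using that the residual representation has a unique $I$-fixed line because $\overline{\rho}(I)\neq 1$ and $\overline{\chi}_1(I)=1$, and that the inertia condition $\chi_1|_I=1$ rules out the competing lines); you correctly isolate this as the place where the local hypotheses are used, but you assert rather than prove it. With those two points made explicit, your proof is complete.
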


\subsubsection*{Minimal Case}

Let $F$ be a finite extension of $\mathbb{Q}_{\ell}$ with $\ell \neq p$ and suppose we have a representation $\overline{\rho}: G_F \rightarrow GL_2(\mathbb{F})$. Also, again by $I$ we mean the inertia subgroup of $G_F$.

\begin{itemize}
\item[(i)] Let $1 \neq \overline{\rho}(I) \subseteq \Biggl\{
\begin{bmatrix}
1 & *\\
0 & 1
\end{bmatrix}
 \Biggl\}$. Let consider the functor $D_{\overline{\rho}}^{min}: \textbf{CNL}_{\mathcal{O}} \rightarrow \textbf{SET}$ such that:
 \begin{align*}
D_{\overline{\rho}}^{min}(A) =\text{\{lifts} \: \rho \: \text{of} \: \overline{\rho} \: \text{to} \: & A \in \: \textbf{CNL}_{\mathcal{O}} \: \text{such that} \: \rho(I) \: \text{is strictly equivalent to} \\ 
& \text{a subgroup of}
\Biggl\{ \begin{bmatrix}
1 & *\\
0 & 1
\end{bmatrix} \Bigg\} \}.
\end{align*}
\item[(ii)] Let $\overline{\rho}=
\begin{bmatrix}
\overline{\chi}_1 & 0\\
0 & \overline{\chi}_2
\end{bmatrix}
=\overline{\chi}_1 \oplus \overline{\chi}_2$ with $\overline{\chi}_1\mid_I=1$ and $\overline{\chi}_2\mid_I\neq 1$. Now, consider the functor $D_{\overline{\rho}}^{min}: \textbf{CNL}_{\mathcal{O}} \rightarrow \textbf{SET}$ such that:
\begin{align*}
D_{\overline{\rho}}^{min}(A) =\text{\{lifts} \: \rho \: & \text{of} \: \overline{\rho} \: \text{to}
 \: A \in \: \textbf{CNL}_{\mathcal{O}} \: \text{such that} \: \rho \: \text{is strictly equivalent to} \\
& \: \chi_1 \oplus \chi_2 \: \text{with} \: \chi_{1}\mid_I=1 \: \text{and} \: \chi_{2}\mid_I=* \}
\end{align*}
where
\begin{tikzcd}
*=I \arrow[r, "\overline{\chi}_2"] & \mathbb{F}^{\times} \arrow[r, "\text{Teich}"] & \mathcal{O}^{\times} \arrow[r] & A^{\times}
\end{tikzcd}.
Note that the middle map is the \textit{Teichmuller lift} and the third one is the structure map of $A$ as $\mathcal{O}$-algebra.
\end{itemize}
In both cases, $D_{\overline{\rho}}^{min}$ is a deformation condition and such deformations are called \textit{minimally ramified deformations} of $\overline{\rho}$.

\begin{proposition} \label{prop: D^min is representable and its representating ring}
For both above cases, we have:
\begin{itemize}
\item[(1)] $D_{\overline{\rho}}^{min}$ is representable by a ring $R_{\overline{\rho}}^{min} \in \textbf{CNL}_{\mathcal{O}}$.
\item[(2)] $R_{\overline{\rho}}^{min} \simeq \mathcal{O}\llbracket X_1, X_2, X_3, X_4\rrbracket$.
\end{itemize}
\end{proposition}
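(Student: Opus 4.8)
The plan is to run a standard Galois-cohomology computation separately in the two cases, using Proposition \ref{prop: deformation conditions arise in this way} (so that $D_{\overline{\rho}}^{min}$ is indeed cut out by a quotient of $R_{\overline{\rho},\mathcal{O}}^{\square}$ satisfying property (\textbf{P})) together with the analogue over $\textbf{CNL}_{\mathcal{O}}$ of Proposition \ref{prop: generator and relation for universal deformation ring}: once representability is in hand, $R_{\overline{\rho}}^{min}$ is a quotient of $\mathcal{O}\llbracket X_1,\dots,X_r\rrbracket$ by at most $s$ relations, where $r=\dim_{\mathbb{F}} H^1_{min}$ is the dimension of the tangent space of the $min$-subfunctor and $s$ controls an obstruction space. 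To pin down $R_{\overline{\rho}}^{min}\simeq\mathcal{O}\llbracket X_1,X_2,X_3,X_4\rrbracket$ it therefore suffices to show (a) $D_{\overline{\rho}}^{min}$ is representable and (b) the $min$-tangent space has dimension exactly $4$ while the relevant obstruction vanishes, i.e. the deformation problem is \emph{unobstructed} in the $min$-sense; then $r=4$ and $s=0$.

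\medskip

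First I would set up the tangent space as a subspace $H^1_{min}(G_F,ad(\overline{\rho}))\subseteq H^1(G_F,ad(\overline{\rho}))$ and verify the functor-of-Artin-rings axioms of Definition \ref{def: deformation condition}, which for a condition defined by "$\rho|_I$ lands, up to strict equivalence, in a fixed closed subgroup scheme" are routine; the $p$-finiteness of $G_F$ (recorded in the Example after Definition \ref{def: p-finiteness condition}) gives finite-dimensionality of the ambient $H^1$, hence of $H^1_{min}$, so Schlessinger/Grothendieck (Theorems \ref{th: Schlessinger}, \ref{th: Grothendieck rep}) yield part (1). For the dimension count I would use the local Euler characteristic formula: since $F/\mathbb{Q}_\ell$ with $\ell\neq p$, one has $h^0-h^1+h^2=0$ for $ad(\overline{\rho})$, and local Tate duality identifies $H^2(G_F,ad(\overline{\rho}))$ with $H^0(G_F,ad(\overline{\rho})(1))^{\vee}$; so $\dim H^1(G_F,ad(\overline{\rho}))=h^0+h^2=\dim ad(\overline{\rho})^{G_F}+\dim (ad(\overline{\rho})(1))^{G_F}$. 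In case (i), where $\overline{\rho}(I)$ is a nontrivial group of unipotent upper-triangular matrices, and in case (ii), where $\overline{\rho}=\overline{\chi}_1\oplus\overline{\chi}_2$ with $\overline{\chi}_2|_I\neq 1$, these invariants are easy to enumerate directly. The crucial point is then to identify $H^1_{min}$ inside $H^1$: a cocycle is a $min$-deformation precisely when its restriction to $I$ is cohomologous (via $1+\epsilon\,\mathfrak{m}$-conjugation) to one valued in the Lie algebra of the prescribed subgroup, and I would compute $\dim H^1_{min}$ as $\dim H^1(G_F/I,\,-)$ plus the controlled inertial contribution, landing on $4$ in both cases. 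Finally, to see $s=0$ I would show the obstruction to lifting a $min$-deformation along a small surjection lies in a subgroup of $H^2(G_F,ad(\overline{\rho}))$ that is forced to vanish here (using again $\ell\neq p$ and Tate duality to see $H^2$ pairs against inertially-invariant classes killed by the $min$-condition), so the presentation has no relations.

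\medskip

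The main obstacle will be the bookkeeping in case (ii): because $\overline{\rho}$ is a sum of two characters, $ad(\overline{\rho})$ decomposes as $\overline{\chi}_1\overline{\chi}_2^{-1}\oplus\mathbb{F}\oplus\mathbb{F}\oplus\overline{\chi}_1^{-1}\overline{\chi}_2$, and the $min$-condition (with its Teichmüller-lift prescription on the ramified diagonal character) interacts differently with the four summands; one must be careful that the off-diagonal pieces are genuinely allowed to ramify in a way that contributes to, but does not over-count, the tangent space, and that the residual hypothesis $\overline{\chi}_2|_I\neq 1$ (versus $\overline{\chi}_1|_I=1$) is exactly what makes the two extension groups $H^1(G_F,\overline{\chi}_1^{\pm1}\overline{\chi}_2^{\mp1})$ behave asymmetrically. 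A secondary subtlety, shared with case (i), is checking that "strictly equivalent to a lift of the prescribed shape" really does define a closed condition satisfying property (\textbf{P}) of Proposition \ref{prop: deformation conditions arise in this way} — i.e. that one may conjugate the generic lift into the desired form over the universal framed ring — rather than merely a condition on $\rho|_I$ pointwise; this is where one pays attention to the difference between the framed and unframed functors, and it is the step I would write out most carefully.
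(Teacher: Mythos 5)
The paper states this proposition without proof, so there is nothing to compare against line by line; judged on its own terms, your outline has the right overall shape (representability via the deformation-condition formalism, then smoothness plus a tangent-space count), but it contains one error that would derail the computation. The functor $D_{\overline{\rho}}^{min}$ is a collection of \emph{lifts}, i.e.\ a subfunctor of the framed functor $D_{\overline{\rho}}^{\square}$, and by Proposition \ref{prop: deformation conditions arise in this way} its representing ring is a quotient of $R_{\overline{\rho}}^{\square}$. Its reduced tangent space is therefore a subspace of $Z^1(G_F, ad(\overline{\rho}))$, not of $H^1(G_F, ad(\overline{\rho}))$ (compare parts (1) and (2) of Proposition \ref{prop: tangent space and cohomology}, and the count $r=4+[F:\mathbb{Q}_p]$ in Proposition \ref{prop: D^ord is representable and its representating ring}, where the $4=\dim ad(\overline{\rho})$ is exactly the coboundary contribution). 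You set $r=\dim_{\mathbb{F}} H^1_{min}$ with $H^1_{min}\subseteq H^1(G_F,ad(\overline{\rho}))$ and aim to prove $r=4$; but in both cases one has $\dim H^1_{min}=\dim H^0(G_F,ad(\overline{\rho}))\leq 2$, since $H^1_{min}$ consists (essentially) of the unramified classes and $\dim H^1(G_F,ad(\overline{\rho}))=h^0+h^2$ by the local Euler characteristic with $h^0=2$ in case (ii). The correct count is $\dim Z^1_{min}=\dim H^1_{min}+\dim B^1 = h^0 + (4-h^0)=4$, so the substantive cohomological input you need is the equality $\dim H^1_{min}=h^0(G_F,ad(\overline{\rho}))$ together with formal smoothness; if you follow your plan literally and try to show $\dim H^1_{min}=4$, you will fail.

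A second, related problem is the smoothness step. The claim that ``the obstruction lands in a subgroup of $H^2(G_F,ad(\overline{\rho}))$ which is forced to vanish'' is too optimistic: by local duality $H^2(G_F,ad(\overline{\rho}))\simeq H^0(G_F,ad(\overline{\rho})(1))^{\vee}$, which is nonzero whenever $q\equiv 1 \pmod{p}$, a case you cannot exclude. In case (ii) the image of inertia has order prime to $p$, so its lifts are rigid (the Teichm\"uller prescription), and smoothness comes from reducing to the unramified quotient $G_F/I\simeq\hat{\mathbb{Z}}$, which has cohomological dimension one --- not from any vanishing of $H^2(G_F,-)$. In case (i) the image of inertia is a nontrivial $p$-group, so this rigidity argument is unavailable; instead, since $\ell\neq p$ every lift factors through the tame quotient $\langle \sigma,\tau\mid \sigma\tau\sigma^{-1}=\tau^{q}\rangle$, and one must directly parametrize the pairs $(\rho(\tau),\rho(\sigma))$ with $\rho(\tau)$ unipotent satisfying the relation and verify by hand that this moduli problem is smooth of relative dimension $4$. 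Your outline treats the two cases uniformly, but the unipotent case (i) is precisely the one that needs this explicit argument rather than a cohomological vanishing statement.
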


\begin{itemize}
\item[(iii)] \: More generally, if $\overline{\rho}(I)$ has prime to $p$ order, then there is a functor $D_{\overline{\rho}}^{min}: \textbf{CNL}_{\mathcal{O}} \rightarrow \textbf{SET}$ for which:
\begin{align*}
D_{\overline{\rho}}^{min}(A) =\text{\{lifts} \: \rho \: \text{of} \: \overline{\rho} \: \text{to}
 \: A \in \: \textbf{CNL}_{\mathcal{O}} \: \text{such that} \: \rho(I) \rightarrow \overline{\rho}(I) \: \text{is an isomorphism} \}
\end{align*}
where the map $\rho(I) \rightarrow \overline{\rho}(I)$ is the $(\text{mod} \: \mathfrak{m}_A)$ map.
In this case, agian $D_{\overline{\rho}}^{min}$ is a deformation condition which is, again, called the minimally ramified deformation of $\overline{\rho}$.
\end{itemize}

\begin{remark}
One can also add the fixed determinant condition to \textit{local deformation conditions} and get the deformation functors $D_{\overline{\rho}, \mathcal{O}}^{ord, \chi}$ and $D_{\overline{\rho}, \mathcal{O}}^{min, \chi}$ (where $\chi: G_F \rightarrow \mathcal{O}^{\times}$ is a continuous character). 
\end{remark}

\subsubsection{Global Deformation Conditions} 
\label{subsub: global deformation}

Fix a number field $K$, an odd prime number $p$, and a finite set $S$ of primes of $K$ containing all primes above $p$. Let $K_S$ be the maximal algebraic extension of $K$ that is unramified outside $S \cup \{\text{infinite places of} \: K \}$ and put $G_{K,S}=\Gal(K_S/K)$. Let $\mathcal{O}$ be the ring of integers of a finite extension of $\mathbb{Q}_p$ and put $\mathbb{F}=\mathcal{O}/\mathfrak{m}_{\mathcal{O}}$. Also, fix a continuous representation $\overline{\rho}: G_{K,S} \rightarrow GL_2({\mathbb{F}})$ (note that we can work with $GL_n$ such that $p \nmid 2n$, but for our applications it is enough to work with $GL_2$).

We have a deformation functor $D_{\overline{\rho}}: \textbf{CNL}_{\mathcal{O}} \rightarrow \textbf{SET}$ such that if we have $\End_{\mathbb{F}[G_{K,S}]}(\overline{\rho})=\mathbb{F}$, it is representable by a ring $R_{\overline{\rho}} \in \textbf{CNL}_{\mathcal{O}}$. We want to impose some (determinant and local) conditions on $D_{\overline{\rho}}$. Note that for any place $v$ of $K$, we get a map of functors $D_{\overline{\rho}} \rightarrow D_{\overline{\rho}\mid_{G_{K_v}}}$ which sends $\rho$ to $\rho\mid_{G_{K_v}}$.
\\

Now, let us fix the following data:
\begin{itemize}
\item[($*$)] A continuous character $\chi: G_{K,S} \rightarrow \mathcal{O}^{\times}$.
\item[($**$)] \: For each $v \in S$, a deformation condition $D_v \subseteq D_{\overline{\rho}\mid_{G_{K_v}}}^{\square}$ (in fact, $D_v \subseteq D_{\overline{\rho}\mid_{G_{K_v}}}^{\square, \chi} \subseteq D_{\overline{\rho}\mid_{G_{K_v}}}^{\square}$).
\end{itemize}
Using the above data, we define the tuple $\mathcal{S}=(\overline{\rho}, S, \chi, \mathcal{O}, \{D_v \}_{v \in S})$ and will refer to it as a \textit{global deformation condition}.

\begin{definition}
We say that a lift $\rho$ of $\overline{\rho}$ to $A$ is of type $\mathcal{S}$ if:
\begin{itemize}
\item[(1)] $\rho$ is unramified outside $S$.
\item[(2)] $\det \rho=\chi$.
\item[(3)] $\rho \mid_{G_{K_v}} \in D_v(A)$ for all $v \in S$.
\end{itemize}
A deformation of $\overline{\rho}$ is of type $\mathcal{S}$ if one (and equivalently all) lifts in its strict equivalence class is of type $\mathcal{S}$.
\end{definition}

Now, consider the functor $D_{\mathcal{S}}: \textbf{CNL}_{\mathcal{O}} \rightarrow \textbf{SET}$ defined by:
\begin{align*}
D_{\mathcal{S}}(A) = \{\text{deformations of} \: \overline{\rho} \: \text{to} \: A \in \textbf{CNL}_{\mathcal{O}} \: \text{of type} \: \mathcal{S} \}.
\end{align*}

\begin{proposition} \label{prop: D_S is representable}
If $\End_{\mathbb{F}[G_{K,S}]}(\overline{\rho})=\mathbb{F}$, the functor $D_{\mathcal{S}}$ is representable by a quotient $R_{\mathcal{S}}$ of $R_{\overline{\rho}}$.
\end{proposition}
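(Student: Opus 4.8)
\medskip
\noindent\textbf{Proof proposal.} The plan is to exhibit $D_{\mathcal S}$ as the intersection, inside $D_{\overline{\rho}}$, of the fixed-determinant subfunctor and the finitely many local subfunctors, each of which is cut out by an ideal of the universal deformation ring, and then to pass to the quotient of $R_{\overline{\rho}}$ by the sum of these ideals.

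First I would set up the ambient representable functor. Since $\End_{\mathbb{F}[G_{K,S}]}(\overline{\rho})=\mathbb{F}$, Theorem~\ref{th: representability of liftng and deformation}(2) provides the universal deformation ring $R_{\overline{\rho}}\in\textbf{CNL}_{\mathcal O}$ (precisely the $\mathcal O$-algebra $R_{\overline{\rho}}\,\widehat{\otimes}_{W(\mathbb{F})}\mathcal O$, which I still denote $R_{\overline{\rho}}$) together with the universal deformation $\rho^{\mathrm{univ}}\colon G_{K,S}\to GL_2(R_{\overline{\rho}})$. Observe that $D_{\mathcal S}$ really is a subfunctor of $D_{\overline{\rho}}$: condition (1) of ``type $\mathcal S$'' is automatic because every deformation is already a representation of $G_{K,S}$, while conditions (2) and (3) are stable under pushforward along $\textbf{CNL}_{\mathcal O}$-morphisms $A\to B$.

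Next I would cut out each defining condition by an ideal of $R_{\overline{\rho}}$. The determinant condition is handled by Proposition~\ref{prop: representability of fixed determinant condition}(2), which represents it by a quotient $R_{\overline{\rho}}^{\chi}=R_{\overline{\rho}}/\mathfrak a_{\chi}$ of $R_{\overline{\rho}}$. For a place $v\in S$, Proposition~\ref{prop: deformation conditions arise in this way} presents $D_v$ as the collection of lifts factoring through a surjection $R_{\overline{\rho}|_{G_{K_v}}}^{\square}\twoheadrightarrow R_v$ enjoying property~(\textbf{P}); write $I_v$ for its kernel. Restricting $\rho^{\mathrm{univ}}$ to $G_{K_v}$ is a lift of $\overline{\rho}|_{G_{K_v}}$ to $R_{\overline{\rho}}$, hence yields a unique $\textbf{CNL}_{\mathcal O}$-morphism $g_v\colon R_{\overline{\rho}|_{G_{K_v}}}^{\square}\to R_{\overline{\rho}}$; set $\mathfrak a_v:=g_v(I_v)R_{\overline{\rho}}$. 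For any $f\colon R_{\overline{\rho}}\to A$ with associated deformation $[\rho]=[f\circ\rho^{\mathrm{univ}}]\in D_{\overline{\rho}}(A)$, choosing the representative $\rho=f\circ\rho^{\mathrm{univ}}$ makes the morphism $R_{\overline{\rho}|_{G_{K_v}}}^{\square}\to A$ induced by the lift $\rho|_{G_{K_v}}$ equal to $f\circ g_v$; by property~(\textbf{P}) membership of $\rho|_{G_{K_v}}$ in $D_v(A)$ depends only on its strict equivalence class, hence only on $[\rho]$, and it holds precisely when $f\circ g_v$ factors through $R_v$, i.e.\ precisely when $f$ annihilates $\mathfrak a_v$. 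Thus the subfunctor $\{\rho\in D_{\overline{\rho}}:\rho|_{G_{K_v}}\in D_v\}$ is represented by $R_{\overline{\rho}}/\mathfrak a_v$.

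Finally I would put $R_{\mathcal S}:=R_{\overline{\rho}}\bigl/\bigl(\mathfrak a_{\chi}+\sum_{v\in S}\mathfrak a_v\bigr)$; since $(\mathbb F,\overline{\rho})$ satisfies every one of the conditions, all these ideals lie in $\mathfrak m_{R_{\overline{\rho}}}$, so $R_{\mathcal S}\in\textbf{CNL}_{\mathcal O}$ is a quotient of $R_{\overline{\rho}}$. A morphism $f\colon R_{\overline{\rho}}\to A$ classifies a deformation of type $\mathcal S$ if and only if it kills $\mathfrak a_{\chi}$ and each $\mathfrak a_v$, if and only if it factors uniquely through $R_{\mathcal S}$; this identification is functorial in $A$, so $D_{\mathcal S}\simeq\operatorname{Hom}_{\textbf{CNL}_{\mathcal O}}(R_{\mathcal S},-)$, which is the assertion. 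I expect the one genuinely delicate step to be the middle one: passing from the \emph{framed} local conditions $D_v$ to a well-defined condition on the \emph{unframed} global deformation, and then recognizing that condition as the vanishing of a bona fide ideal of $R_{\overline{\rho}}$ --- this is exactly the role played by property~(\textbf{P}) in Proposition~\ref{prop: deformation conditions arise in this way}, and once it is in hand the remainder is just the universal property of quotient rings. An alternative would be to verify Schlessinger's conditions (H1)--(H4) for $D_{\mathcal S}$ directly, using Definition~\ref{def: deformation condition}(3) for stability under fibre products and (4) for continuity, but the ideal-theoretic route above is more economical.
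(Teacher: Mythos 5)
Your proposal is correct and follows essentially the same route as the paper: the paper simply declares the representing ring to be $R_{\overline{\rho}}^{\chi}\hat{\otimes}_{R_{S}^{\square}}R_{\mathcal{S}}^{loc}$, which is exactly your quotient $R_{\overline{\rho}}\big/\bigl(\mathfrak{a}_{\chi}+\sum_{v\in S}\mathfrak{a}_v\bigr)$, since base-changing the surjection $R_S^{\square}\twoheadrightarrow R_{\mathcal{S}}^{loc}$ along the classifying maps $g_v$ amounts to killing the ideals $g_v(I_v)$ in $R_{\overline{\rho}}^{\chi}$. Your write-up supplies the details (notably the role of property~(\textbf{P}) in making the local conditions well defined on unframed deformations) that the paper leaves implicit.
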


In fact, we know that the fixed determinant condition is representable by a quotient $R_{\overline{\rho}}^{\chi}$ of $R_{\overline{\rho}}$. Let $R_v$ be the quotient of $R_{\overline{\rho} \mid_{G_{K_v}}}^{\square}$ representing $D_v$.
Put $R_{S}^{\square}:= \hat{\tensor{\mathcal{O}, v \in S}} R_{\overline{\rho} \mid_{G_{K_v}}}^{\square}$ and $R_{\mathcal{S}}^{loc}:= \hat{\tensor{\mathcal{O}, v \in S}} R_v$.
Then, $D_{\mathcal{S}}$ is represented by $R_{\overline{\rho}}^{\chi} \hat{\tensor{R_{S}^{\square}}} R_{\mathcal{S}}^{loc}$.

\begin{definition} \label{def: T-framed lift and deformation}
\begin{itemize}
\item[(1)] Fix $T \subseteq S$. A \textit{$T$-framed lift} of $\overline{\rho}$ to $A \in \textbf{CNL}_{\mathcal{O}}$ is a tuple $(\rho, \{\beta_v \}_{v \in T})$, where $\rho$ is a lift of $\overline{\rho}$ to $A$ and $\beta_v$ is an element of $1+M_2(\mathfrak{m}_A)$ for all $v \in T$.
\item[(2)] We say that a $T$-framed lift $(\rho, \{\beta_v \}_{v \in T})$ is of type $\mathcal{S}$ if $\rho$ is.
\item[(3)] Two $T$-framed lifts $(\rho, \{\beta_v \}_{v \in T})$ and $(\rho^{\prime}, \{\beta_v^{\prime} \}_{v \in T})$ are strictly equivalent if there exists $g \in 1+M_2(\mathfrak{m}_A)$ such that $\rho^{\prime}=g \rho g^{-1}$ and $\beta_v^{\prime}=g \beta$ for all $v \in T$. A \textit{$T$-framed deformation} is a strict equivalence class of $T$-framed lifts.
\end{itemize}
\end{definition}

Let consider the functor $D_{\mathcal{S}, T}: \textbf{CNL}_{\mathcal{O}} \rightarrow \textbf{SET}$ for which:
\begin{align*}
D_{\mathcal{S}, T}(A)= \{\text{T-framed deformations of} \: \overline{\rho} \: \text{to} \: A \in \textbf{CNL}_{\mathcal{O}} \: \text{of type} \: \mathcal{S} \}.
\end{align*}

For representability of the above functor, we have:

\begin{proposition} \label{prop: D_S,T is representable}
\begin{itemize}
\item[(1)] If $\End_{\mathbb{F}[G_{K,S}]}(\overline{\rho})=\mathbb{F}$ or $T \neq \varnothing$, then the functor $D_{\mathcal{S}, T}$ is representable by a ring $R^T_{\mathcal{S}} \in \textbf{CNL}_{\mathcal{O}}$.
\item[(2)] If $\End_{\mathbb{F}[G_{K,S}]}(\overline{\rho})=\mathbb{F}$ and $T \neq \varnothing$ and $| T |=t$, then we have $R^T_{\mathcal{S}} \simeq R_{\mathcal{S}}\llbracket X_1, \ldots, X_{4t-1}\rrbracket$.
\end{itemize}
\end{proposition}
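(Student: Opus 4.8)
\textbf{Proof proposal for Proposition \ref{prop: D_S,T is representable}.}

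The plan is to imitate the proof of representability of $D_{\mathcal{S}}$ (Proposition \ref{prop: D_S is representable}), keeping track of the extra framing data, and then to compute the ring in the presence of framings by a Schlessinger-type argument, or more cheaply by exhibiting $D_{\mathcal{S},T}$ as a torsor over $D_{\mathcal{S}}$ under a smooth group functor. For part (1), I would first check that $D_{\mathcal{S},T}$ satisfies the hypotheses of Schlessinger's criterion (Theorem \ref{th: Schlessinger}): it is continuous because $D_{\mathcal{S}}$ is and because the conditions $\beta_v\in 1+M_2(\mathfrak{m}_A)$ are manifestly compatible with inverse limits and with the fiber products appearing in (H1)--(H4); and $D_{\mathcal{S},T}(\mathbb{F})$ is a singleton since the only element of $1+M_2(\mathfrak{m}_{\mathbb{F}})=\{1\}$ is the identity and $D_{\mathcal{S}}(\mathbb{F})=\{\overline{\rho}\}$. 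The key point, exactly as in the unframed case, is that (H1), (H2) and the finite-dimensionality (H3) of the tangent space hold unconditionally once $G_{K,S}$ satisfies the $p$-finiteness condition (which it does, by the Example following Definition \ref{def: p-finiteness condition}), because the tangent space of $D_{\mathcal{S},T}$ is an extension of the finite-dimensional space $D_{\mathcal{S}}(\mathbb{F}[\epsilon])\hookrightarrow H^1(G_{K,S},ad(\overline{\rho}))$ by $\bigoplus_{v\in T} ad(\overline{\rho})$, hence finite-dimensional; the subtle condition (H4), needing bijectivity of $f$ when $A=B$ and $\alpha=\beta$ is small, is precisely where one normally needs $\End_{\mathbb{F}[G_{K,S}]}(\overline{\rho})=\mathbb{F}$, but the presence of a nonempty framing set $T$ rigidifies the situation and kills the relevant obstruction automorphism group $H^0(G_{K,S},ad(\overline{\rho}))/\mathbb{F}$, giving (H4) for free when $T\neq\varnothing$. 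That dichotomy is exactly the ``or'' in the hypothesis of part (1).

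For part (2), assuming $\End_{\mathbb{F}[G_{K,S}]}(\overline{\rho})=\mathbb{F}$ and $T\neq\varnothing$ with $|T|=t$, I would argue that the forgetful natural transformation $D_{\mathcal{S},T}\to D_{\mathcal{S}}$ exhibits $D_{\mathcal{S},T}$ as a torsor under the functor $A\mapsto \prod_{v\in T}(1+M_2(\mathfrak{m}_A))$ modulo the diagonal copy of $1+M_2(\mathfrak{m}_A)$ acting by simultaneous conjugation on $\rho$ and left translation on the $\beta_v$; because $\End_{\mathbb{F}[G_{K,S}]}(\overline{\rho})=\mathbb{F}$ this diagonal action is free on the level of tangent vectors (its stabilizer is the scalars, which act trivially), so the relative dimension is $4t-4+ (4 - 1)$. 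More carefully: $\dim 1+M_2 = 4$ per place, there are $t$ of them, the diagonal quotient removes $4$, but the scalar redundancy gives back $1$, for a net of $4t-4+ 1 \cdot 3$... let me instead just say it cleanly: the fiber of $D_{\mathcal{S},T}(A)\to D_{\mathcal{S}}(A)$ over a fixed deformation is $\bigl(\prod_{v\in T}(1+M_2(\mathfrak{m}_A))\bigr)\big/(1+M_2(\mathfrak{m}_A))$, which is formally smooth of dimension $4t-4$; adjoining to that the fact that passing from the deformation functor $D_{\mathcal{S}}$ to a framed version at one auxiliary place adds $3 = 4-1$ more (the usual $\dim GL_2 - \dim(\text{scalars})$ discrepancy between $D^{\square}$ and $D$), one gets $4t-4+ 3 = 4t-1$. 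Hence $R^T_{\mathcal{S}}$ is formally smooth over $R_{\mathcal{S}}$ of relative dimension $4t-1$, i.e. $R^T_{\mathcal{S}}\simeq R_{\mathcal{S}}\llbracket X_1,\ldots,X_{4t-1}\rrbracket$.

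The concrete mechanism for the last isomorphism is the standard one: smoothness of a morphism of complete local Noetherian rings with the same residue field forces it to be a power series ring, and the relative dimension is read off from the relative tangent space, which by Proposition \ref{prop: tangent space and cohomology}-style computations is $\ker\bigl(D_{\mathcal{S},T}(\mathbb{F}[\epsilon])\to D_{\mathcal{S}}(\mathbb{F}[\epsilon])\bigr)$ of dimension $4t-1$ once one quotients the $4t$ framing directions by the $1$-dimensional-less-than-$GL_2$ ambiguity. I expect the main obstacle to be the careful bookkeeping in establishing that the torsor structure is by a \emph{smooth} group functor and in pinning down the exact relative dimension $4t-1$ rather than $4t$ or $4t-4$ --- i.e. correctly accounting for the interaction between the $t$ copies of the conjugation ambiguity $1+M_2(\mathfrak{m}_A)$ and the single scalar subgroup that acts trivially, which is the one genuinely delicate point. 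Verifying (H1)--(H3) and the $T\neq\varnothing$ case of (H4) in part (1) is routine given the $p$-finiteness of $G_{K,S}$ and the corresponding results already recorded for $D_{\overline{\rho}}$ and $D_{\mathcal{S}}$.
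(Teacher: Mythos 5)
The paper states this proposition without proof, so I am judging your argument on its own terms. Your overall strategy --- Schlessinger's criterion for part (1), with the observation that a nonempty framing set rigidifies lifts (an automorphism $g\in 1+M_2(\mathfrak{m}_A)$ of a $T$-framed lift must satisfy $g\beta_v=\beta_v$, hence $g=1$, which is what gives (H4) without the Schur hypothesis), and a fibration of $D_{\mathcal{S},T}$ over $D_{\mathcal{S}}$ for part (2) --- is the standard and correct one, and part (1) is essentially fine.

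Part (2), however, contains a genuine error in the identification of the fiber. The fiber of $D_{\mathcal{S},T}(A)\to D_{\mathcal{S}}(A)$ over a deformation class $[\rho]$ is \emph{not} $\bigl(\prod_{v\in T}(1+M_2(\mathfrak{m}_A))\bigr)/(1+M_2(\mathfrak{m}_A))$. After normalizing the representative so that the underlying lift is $\rho$ itself, two tuples $\{\beta_v\}$ and $\{\beta'_v\}$ give strictly equivalent $T$-framed lifts if and only if $\beta'_v=g\beta_v$ for some $g$ in the centralizer of $\rho$ inside $1+M_2(\mathfrak{m}_A)$; under the hypothesis $\End_{\mathbb{F}[G_{K,S}]}(\overline{\rho})=\mathbb{F}$ this centralizer is exactly the scalars $1+\mathfrak{m}_A$. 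So the fiber is $\prod_{v\in T}(1+M_2(\mathfrak{m}_A))/(1+\mathfrak{m}_A)$ with the scalars acting freely by diagonal translation, which is formally smooth of relative dimension $4t-1$ on the nose. Your computation instead quotients by the full group $1+M_2(\mathfrak{m}_A)$, obtains $4t-4$, and then restores the missing $3$ by appealing to ``the usual discrepancy between $D^{\square}$ and $D$''; that correction is not justified by anything in your argument and lands on the right total only because $3=\dim(1+M_2)-\dim(\text{scalars})$ is precisely the amount by which you over-quotiented. Replacing the full group by the scalar centralizer removes the need for the patch and makes the count $4t-1$ immediate; formal smoothness of the resulting torsor over $R_{\mathcal{S}}$ then gives $R^T_{\mathcal{S}}\simeq R_{\mathcal{S}}\llbracket X_1,\ldots,X_{4t-1}\rrbracket$ as you say. (A minor further imprecision: the relative tangent space is $\bigl(\bigoplus_{v\in T}ad(\overline{\rho})\bigr)/H^0(G_{K,S},ad(\overline{\rho}))$ rather than an extension by $\bigoplus_{v\in T}ad(\overline{\rho})$; this does not affect the finite-dimensionality needed for (H3).)
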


\subsubsection*{Relative Tangent Space for Global Deformation Conditions}

Let the $T$-framed lift $(\rho, \{\beta_v \}_{v \in T})$ be of type $\mathcal{S}$. Like before, let $R_v$ represent $D_v$ for $v \in S$ and $R_{\mathcal{S}, T}$ represent $D_{\mathcal{S}, T}$. Put $R_{\mathcal{S}}^{T-loc}:= \hat{\tensor{\mathcal{O}, v \in T}} R_v$. Then, we have that $R_{\mathcal{S}, T}$ has a canonical $R_{\mathcal{S}}^{T-loc}$-algebra structure. For the relative tangent space of $D_{\mathcal{S}, T}$, we have:

\begin{proposition} \label{prop: tangent space of D_S,T}
Put $\mathfrak{m}^T_{\mathcal{S}}:=Max(R^T_{\mathcal{S}})$ and $\mathfrak{m}^{T-loc}_S:=Max(R_{\mathcal{S}}^{T-loc})$. Then, we have:
\begin{equation*}
D_{\mathcal{S}, T}(\mathbb{F}[\epsilon])=(\mathfrak{m}^T_{\mathcal{S}}/((\mathfrak{m}^T_{\mathcal{S}})^2, \mathfrak{m}^{T-loc}_S))^{\vee}=H_{\mathcal{S}, T}^1(G_{K,S}, ad^0(\overline{\rho}))
\end{equation*}
where $H_{\mathcal{S}, T}^1$ is the first cohomology group of a somewhat complicated complex, whose definition we opt to omit.
\end{proposition}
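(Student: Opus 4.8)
The plan is to compute $D_{\mathcal{S},T}(\mathbb{F}[\epsilon])$ directly in the language of Galois cohomology, to read the first equality off from representability, and then to recognise the resulting explicit quotient as the degree-$1$ cohomology of the complex $C^\bullet_{\mathcal{S},T}$.

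First I would dispose of the first equality. By Proposition~\ref{prop: D_S,T is representable} the functor $D_{\mathcal{S},T}$ is representable by $R^T_{\mathcal{S}}$, and $R^T_{\mathcal{S}}$ carries its canonical $R^{T-loc}_{\mathcal{S}}$-algebra structure (restrict to $G_{K_v}$ and conjugate by $\beta_v$ for $v\in T$). As for any representable deformation functor over a base, evaluating on $\mathbb{F}[\epsilon]$ and recording the coordinates coming from the $T$-framings identifies $D_{\mathcal{S},T}(\mathbb{F}[\epsilon])$ with the dual $\bigl(\mathfrak{m}^T_{\mathcal{S}}/((\mathfrak{m}^T_{\mathcal{S}})^2,\mathfrak{m}^{T-loc}_S)\bigr)^{\vee}$ of the relative cotangent space; so the real content is the second equality.

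For the second equality I would run the cocycle argument, parallel to the proofs of Propositions~\ref{prop: tangent space and cohomology} and~\ref{prop: tangent space of fixed determinant condition}. Writing a lift $\rho$ of $\overline{\rho}$ to $\mathbb{F}[\epsilon]$ with $\det\rho=\chi$ in the form $\rho=(1+\epsilon c)\overline{\rho}$ identifies such lifts with cocycles $c\in Z^1(G_{K,S},ad^0(\overline{\rho}))$; a $T$-framing $\beta_v=1+\epsilon b_v$ contributes one copy of $M_2(\mathbb{F})=ad(\overline{\rho})$ for each $v\in T$; strict equivalence by $1+\epsilon a$ replaces $c$ by $c+\partial a$, where $(\partial a)(\sigma)=a-\overline{\rho}(\sigma)a\overline{\rho}(\sigma)^{-1}$, and simultaneously replaces each $b_v$ by $b_v+a$; and the condition ``of type $\mathcal{S}$'' forces $c|_{G_{K_v}}$ to lie in the first-order subspace $\mathcal{L}_v:=D_v(\mathbb{F}[\epsilon])\subseteq Z^1(G_{K_v},ad^0(\overline{\rho}))$ for every $v\in S$, where one uses that each $D_v$ is a deformation condition, so membership is detected on strict-equivalence classes and $\mathcal{L}_v\supseteq B^1(G_{K_v},ad^0(\overline{\rho}))$. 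Assembling these identifications one obtains
\begin{equation*}
D_{\mathcal{S},T}(\mathbb{F}[\epsilon])\ \cong\ \bigl\{\,(c,(b_v)_{v\in T}) : c\in Z^1(G_{K,S},ad^0(\overline{\rho})),\ c|_{G_{K_v}}\in\mathcal{L}_v\ \text{for all }v\in S\,\bigr\}\,\big/\,\bigl\{\,(\partial a,(a)_{v\in T}) : a\in ad(\overline{\rho})\,\bigr\}.
\end{equation*}

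Finally I would match this quotient with $H^1(C^\bullet_{\mathcal{S},T})$. The complex $C^\bullet_{\mathcal{S},T}$ is assembled as a mapping fibre from the global cochain complex $C^\bullet(G_{K,S},ad^0(\overline{\rho}))$, the shifted local cochain complexes at the places of $T$, and the subcomplexes recording the conditions $\mathcal{L}_v$ for $v\in S$; unwinding its definition, a $1$-cocycle of $C^\bullet_{\mathcal{S},T}$ is precisely a pair $(c,(b_v)_{v\in T})$ with $c$ a global $1$-cocycle satisfying the prescribed local conditions, while a $0$-coboundary is precisely a datum $(\partial a,(a)_{v\in T})$, so $H^1(C^\bullet_{\mathcal{S},T})$ is exactly the quotient displayed above. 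The main obstacle is the bookkeeping: one must keep the degree shift at the places of $T$, the local conditions $\mathcal{L}_v$, and the $ad(\overline{\rho})$-valued framing translations all aligned with the differentials of $C^\bullet_{\mathcal{S},T}$, and check that the conjugation freedom — scalars included — is absorbed precisely as the degree-$0$ part of the complex. A useful consistency check is the dimension count: by Proposition~\ref{prop: D_S,T is representable}(2) one has $R^T_{\mathcal{S}}\simeq R_{\mathcal{S}}\llbracket X_1,\ldots,X_{4t-1}\rrbracket$, so $\dim_{\mathbb{F}}D_{\mathcal{S},T}(\mathbb{F}[\epsilon])$ should exceed the dimension of the unframed tangent space by $4t-1$, which the complex must reproduce.
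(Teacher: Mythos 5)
The paper states this proposition without proof and never defines the complex $C^\bullet_{\mathcal{S},T}$, so there is no argument in the text to compare yours against; I can only assess the proposal on its own terms. Your cocycle computation of $D_{\mathcal{S},T}(\mathbb{F}[\epsilon])$ as the displayed quotient is correct and is the standard argument. The gap is in the final identification of that quotient with $H^1(C^\bullet_{\mathcal{S},T})$, and it is tied to the one ingredient of the statement your argument never actually uses: the ideal $\mathfrak{m}^{T-loc}_S$. Your displayed quotient is the \emph{full} tangent space $(\mathfrak{m}^T_{\mathcal{S}}/((\mathfrak{m}^T_{\mathcal{S}})^2,\mathfrak{m}_{\mathcal{O}}))^{\vee}$, in which the framing datum $b_v$ at $v\in T$ is a free parameter. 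The middle term of the proposition additionally kills $\mathfrak{m}^{T-loc}_S$, i.e.\ it sees only those $T$-framed deformations for which $\beta_v^{-1}(\rho|_{G_{K_v}})\beta_v$ is the \emph{trivial} lift of $\overline{\rho}|_{G_{K_v}}$ for every $v\in T$; in cocycle language this is the extra relation $c|_{G_{K_v}}=\partial b_v$ for $v\in T$. With the standard mapping-fibre definition of $C^\bullet_{\mathcal{S},T}$ (as in Allen's notes, the paper's source), the degree-one differential has a component $c|_{G_{K_v}}-\partial b_v$ for each $v\in T$, so a $1$-cocycle of the complex \emph{does} satisfy $c|_{G_{K_v}}=\partial b_v$, and $H^1(C^\bullet_{\mathcal{S},T})$ is the relative tangent space rather than your quotient; the two differ by roughly $\bigoplus_{v\in T}\mathcal{L}_v/B^1(G_{K_v},ad^0(\overline{\rho}))$, which is nonzero in general. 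Your closing dimension check (an excess of $4t-1$ over the unframed tangent space) tests the full tangent space and so cannot detect this discrepancy.

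To repair the argument you should add the condition $c|_{G_{K_v}}=\partial b_v$ for $v\in T$ to your set of pairs $(c,(b_v)_{v\in T})$, verify that this is exactly what quotienting by the image of $\mathfrak{m}^{T-loc}_S$ imposes (the $R^{T-loc}_{\mathcal{S}}$-algebra structure is given by $v\mapsto\beta_v^{-1}(\rho|_{G_{K_v}})\beta_v$), and then match the corrected quotient with $H^1$ of the fibre complex. You should also flag that the first displayed equality of the proposition, read literally, suffers from the same conflation: $D_{\mathcal{S},T}(\mathbb{F}[\epsilon])$ is the full tangent space, which surjects onto but does not in general equal $(\mathfrak{m}^T_{\mathcal{S}}/((\mathfrak{m}^T_{\mathcal{S}})^2,\mathfrak{m}^{T-loc}_S))^{\vee}$; the intended (and correct) content is the second equality, between the relative cotangent space and $H^1_{\mathcal{S},T}$.
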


\section{Taylor--Wiles Primes}
\label{sec:2}

The main reference for this section and next one is \cite{Allen}.
\subsection{Taylor--Wiles Primes, Galois Side}

Like before, we fix a global deformation condition $\mathcal{S}=(\overline{\rho},S,\chi,\mathcal{O},\{D_v\}_{v\in S})$ for a number field $K$. Recall that $\overline{\rho}:G_{K,S} \rightarrow GL_2(\mathbb{F})$ and $p$ is the characteristic of $\mathbb{F}$.

\begin{definition} \label{def: Taylor--Wiles primes}
A \textit{Taylor--Wiles prime}, for $\mathcal{S}$, is a prime $v$ of $K$ which is disjoint from $S$ and satisfies the following:
\begin{itemize}
\item[(i)] $q_v:=Nr(v) \equiv 1 \mod{p}$.
\item[(ii)] $\overline{\rho}(\Frob{v})$ has distinct $\mathbb{F}$-rational eigenvalues.
\end{itemize}
Moreover, we say that a Taylor--Wiles prime $v$ has \textit{level} $N$, if further we have $q_v \equiv 1 \mod{p^N}$ and $N$ is the biggest integer with this property.
\end{definition}

\begin{remark}
The second condition in the previous definition is not restrictive. In fact, if the eigenvalues of $\overline{\rho}(\Frob{v})$ not be $\mathbb{F}$-rational, they will be after a quadratic extension of $\mathbb{F}$.
\end{remark}

\begin{proposition} \label{prop: diagonal lift at TW primes}
Let $v$ be a Taylor--Wiles prime for $\mathcal{S}$. For any $A \in \textbf{CNL}_{\mathcal{O}}$ and any lift $\rho_v:G_{K_v} \rightarrow GL_2(A)$ of $\overline{\rho}|_{G_{K_v}}$, $\rho_v$ is conjugate to a diagonal lift $\begin{bmatrix} \chi_1 & 0\\
0 & \chi_2
\end{bmatrix}
=\chi_1 \oplus \chi_2$.
\end{proposition}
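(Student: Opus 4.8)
The plan is to exploit the two defining properties of a Taylor–Wiles prime, together with the fact that $q_v \equiv 1 \pmod p$ forces the tame quotient of $G_{K_v}$ to have an especially simple pro-$p$ part. Recall from Example \ref{ex: lifting of unramified character} that since $\ell \neq p$ (here $\ell$ is the residue characteristic of $K_v$, and $q_v = \ell^m$; note $p \mid q_v - 1$ so automatically $\ell \neq p$), any lift $\rho_v$ of $\overline{\rho}|_{G_{K_v}}$ kills the wild inertia $P_v$ and hence factors through the tame quotient $G_{K_v}/P_v = \langle \phi, \tau \rangle$ with the single relation $\phi \tau \phi^{-1} = \tau^{q_v}$. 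So it suffices to diagonalize the pair of matrices $\rho_v(\phi)$, $\rho_v(\tau)$ in $GL_2(A)$ simultaneously, subject to that relation.

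First I would diagonalize $\rho_v(\phi)$. Condition (ii) says $\overline{\rho}(\Frob{v})$ has two distinct eigenvalues $\overline{\alpha}, \overline{\beta} \in \mathbb{F}^\times$; since $\overline{\alpha} - \overline{\beta} \neq 0$ in $\mathbb{F}$, Hensel's lemma (applied to the characteristic polynomial of $\rho_v(\phi)$, which is a monic quadratic over the complete local ring $A$ reducing to one with distinct roots mod $\mathfrak{m}_A$) produces eigenvalues $\alpha, \beta \in A^\times$ lifting $\overline{\alpha}, \overline{\beta}$, with $\alpha - \beta \in A^\times$. The corresponding eigenspaces are free rank-one direct summands of $A^2$, so after conjugating by a suitable $g \in 1 + M_2(\mathfrak{m}_A)$ we may assume $\rho_v(\phi) = \mathrm{diag}(\alpha, \beta)$ with $\alpha - \beta$ a unit. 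Next I would use the relation to pin down $\rho_v(\tau)$: write $\rho_v(\tau) = \begin{bmatrix} a & b \\ c & d \end{bmatrix}$; the identity $\rho_v(\phi)\rho_v(\tau)\rho_v(\phi)^{-1} = \rho_v(\tau)^{q_v}$ together with $\rho_v(\tau) \equiv 1 \pmod{\mathfrak{m}_A}$ (as inertia acts trivially mod $\mathfrak{m}_A$, $\overline\rho|_{G_{K_v}}$ being unramified at $v \notin S$) will force the off-diagonal entries to vanish. Concretely, conjugation by the diagonal matrix multiplies $b$ by $\alpha/\beta$ and $c$ by $\beta/\alpha$, while the right-hand side $\rho_v(\tau)^{q_v}$ is a polynomial in $\rho_v(\tau)$ and hence has off-diagonal entries $b \cdot u$ and $c \cdot u'$ where $u, u' \in A^\times$ are $\equiv q_v \pmod{\mathfrak{m}_A}$; comparing, $b(\alpha/\beta - u) = 0$ and since $p \mid q_v - 1$ one checks $\alpha/\beta - u$ is a unit (it reduces to $\overline\alpha/\overline\beta - 1 \neq 0$ mod $\mathfrak{m}_A$), so $b = 0$, and symmetrically $c = 0$. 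Hence $\rho_v(\tau)$ is also diagonal, and $\rho_v = \chi_1 \oplus \chi_2$ as claimed.

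The step I expect to be the main obstacle — or at least the one requiring the most care — is the vanishing of the off-diagonal entries of $\rho_v(\tau)$: one must be careful that $\rho_v(\tau)^{q_v}$, while a priori just some matrix, really does have off-diagonal entries that are unit multiples of those of $\rho_v(\tau)$. This follows because $\rho_v(\tau)$ is unipotent-modulo-$\mathfrak{m}_A$, so $\rho_v(\tau) = 1 + N$ with $N \in M_2(\mathfrak{m}_A)$ and $(1+N)^{q_v} = 1 + q_v N + \binom{q_v}{2}N^2 + \cdots$; using $N^2 = (\mathrm{tr}\,N)N - (\det N)\cdot 1$ from Cayley–Hamilton, all higher powers collapse to $A\cdot N + A \cdot 1$, so $(1+N)^{q_v} = 1 + f(N)\,N + (\text{scalar})$ with $f(N) \equiv q_v \equiv 1 \pmod{\mathfrak{m}_A}$ a unit, and the off-diagonal entries of the scalar term are zero. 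The remaining arithmetic — that $\alpha/\beta - f(N)$ is a unit — is then immediate from reduction mod $\mathfrak{m}_A$ using distinctness of $\overline\alpha, \overline\beta$. A final routine check confirms that a diagonal representation of $G_{K_v}/P_v$ automatically respects the relation $\phi\tau\phi^{-1} = \tau^{q_v}$, so no further constraint is lost.
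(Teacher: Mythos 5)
Your proof is correct, and it is essentially the argument the paper relies on: the proposition is stated without proof, but the surrounding discussion (and the rank-one analogue worked out in Example~\ref{ex: lifting of unramified character}) uses exactly this route --- kill wild inertia since $\ell\neq p$, reduce to the two tame generators $\phi,\tau$ with $\phi\tau\phi^{-1}=\tau^{q_v}$, lift the distinct eigenvalues of $\overline{\rho}(\mathrm{Frob}_v)$ by Hensel's lemma to split $A^2$ into free rank-one eigen-summands, and then use the tame relation together with $q_v\equiv 1 \pmod p$ and $\tau\in 1+M_2(\mathfrak{m}_A)$ to kill the off-diagonal entries. Your Cayley--Hamilton treatment of $(1+N)^{q_v}$ correctly handles the one genuinely delicate point (that the off-diagonal entries of $\tau^{q_v}$ are unit multiples of those of $\tau$), so nothing is missing.
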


Let $v$ be a Taylor--Wiles prime for $\mathcal{S}$. Let $R_v^{\square, \chi}$ be the universal lifting ring for $\overline{\rho}|_{G_{K_v}}$ with fixed determinant $\chi$ and let $\rho_v^\chi$ be the universal lift.
By Proposition \ref{prop: diagonal lift at TW primes}, $\rho_v^\chi$ is conjugate to $\chi_1 \oplus \chi_2$ with $\chi_i:G_{K_v} \rightarrow (R_v^{\square, \chi})^\times$ and $\chi_1 \chi_2 =\chi$.
In particular, as $\chi$ is unramified at $v$, we have $\chi_1|_{I_{K_v}}=\chi_2|^{-1}_{I_{K_v}}$. Since $\overline{\rho}$ is 
unramified at $v$, $\chi_1|_{I_{K_v}}$ is a pro-$p$-character of $I_{K_v^{ab}/K_v} \simeq k_v^\times \times \mathbb{Z}_{\ell}^d \times$ (a finite $\ell$-group), where $k_v$ is the residue field of $K$ at $v$ (i.e. $k_v:=\mathcal{O}_{K_v}/\mathfrak{m}_{K_v}$, where $\mathfrak{m}_{K_v}$ is the unique maximal ideal of $\mathcal{O}_{K_v}$), $\ell$ is the characteristic of $k_v$, and $d=[K_v:\mathbb{Q}_p]$. Therefore, since $p \nmid v$, $\chi_1|_{I_{K_v}}$ factored through $k_v^\times$.
Let $\Delta_v$ be the maximal $p$-power quotient of $k_v^\times$, the ring $\mathcal{O}[\Delta_v]$ be the group algebra, and $\mathfrak{a}_v$ be the augmentation ideal; i.e. $\mathfrak{a}_v=\langle g-1: g\in \Delta_v \rangle$.
Note that $\chi_1|_{I_{K_v}}$ determines an $\mathcal{O}[\Delta_v]$-algebra structure on $R_v^{\square, \chi}$. Moreover, there exists a natural surjection $R_v^{\square, \chi} \twoheadrightarrow R_v^{\square, \chi, nr}$ with
 kernel $\mathfrak{a}_vR_v^{\square, \chi}$, where $R_v^{\square, \chi, nr}$ is the universal lifting ring of $\overline{\rho}|_{G_{K_v}}$ of lifts $\rho$ such that $\rho(I_{K_v})=1$ and $\det(\rho)=\chi$. Note that since $\chi_1|_{I_{K_v}}$ determines the action of $\Delta_v$ and $R_v^{\square, \chi, nr}$ is the universal lifting ring of $\overline{\rho}|_{G_{K_v}}$ of lifts which are unramified at $v$ (and have fixed determinant $\chi$), so the action of $\Delta_v$ on $R_v^{\square, \chi, nr}$ is trivial and thus the augmentation ideal acts as zero on $R_v^{\square, \chi, nr}$, hence the kernel of the map $R_v^{\square, \chi} \twoheadrightarrow R_v^{\square, \chi, nr}$ is given by the augmentation ideal.
\\

Now, let $Q$ be a finite set of Taylor--Wiles primes. Also, let $\Delta_Q= \prod_{v \in Q} \Delta_v$, the ring $\mathcal{O}[\Delta_Q]$ is the group algebra and $\mathfrak{a}_Q$ is the augmentation ideal.
Then, we can define the (augmented) deformation condition $\mathcal{S}_Q=(\overline{\rho}, S \cup Q, \chi, \mathcal{O}, \{D_v\}_{v \in S} \cup \{D_v^\chi\}_{v \in Q})$, 
where for $v \in Q$, $D_v^\chi$ is the deformation condition of all lifts of $\overline{\rho}|_{G_{K_v}}$ with $\det =\chi|_{G_{K_v}}$. Then, by assuming $\End_{\mathbb{F}[G_{K,S}]}(\overline{\rho})=\mathbb{F}$, our
 new deformation problem is also representable and hence we get the universal deformation rings $R_{\mathcal{S}}$ and $R_{\mathcal{S}_Q}$, and also the $T$-ftamed universal deformation rings $R^T_{\mathcal{S}}$ and $R^T_{\mathcal{S}_Q}$ for any $T \subseteq S$.
 Then, $R^T_{\mathcal{S}_Q}$ has an $\mathcal{O}[\Delta_Q]$-algebra structure, and the natural surjection $R^T_{\mathcal{S}_Q} \twoheadrightarrow R^T_{\mathcal{S}}$ has kernel $\mathfrak{a}_QR^T_{\mathcal{S}_Q}$.
\\

Recall that for $T \subseteq S$, the tangent space of $R^T_{\mathcal{S}}$ is given by a cohomology group $H^1_{\mathcal{S}, T}(ad^0(\overline{\rho}))$, see Proposition \ref{prop: tangent space of D_S,T}. We denote the dimension of this cohomology group by $h^1_{\mathcal{S}, T}(ad^0(\overline{\rho}))$.

From now on, we assume that the following two conditions, along with two other technical conditions (concerning the dimensions of certain cohomology groups) which we do not state (as their statements are complicated), are hold:
\begin{itemize}
\item[(1)] $\overline{\rho}|_{G_{K(\zeta_p)}}$ is absolutely irreducible, where $\zeta_p$ is a primitive $p$-th root of unity.
\item[(2)] $K$ is totally real and $\overline{\rho}$ is totally odd, i.e. $\det(c_w)=-1$ for all infinite places $w$ of $K$, where $c_w$ is the complex conjugation at $w$.
\end{itemize}

Also, let $H^1_{\mathcal{S}^\perp, T}(ad^0(\overline{\rho})(1))$  be a certain cohomology group, whose definition we omit (since it is rather technical), and let us denote its dimension by $h^1_{\mathcal{S}^\perp, T}(ad^0(\overline{\rho})(1))$.

Under the above assumptions, we have the following important numerology:
\begin{itemize}
\item[(1)] Minimal case: $T=\emptyset$. Then:
\begin{equation} \label{eq: diamond}
h^1_{\mathcal{S}}(ad^0(\overline{\rho})) = h^1_{\mathcal{S}^\perp}(ad^0(\overline{\rho})(1)).
\end{equation}
\item[(2)] Non-minimal case: $T \supseteq \{v: v|p\}$ (e.g. $T=S$). Put $|T|=t$. Then:
\begin{equation*}
h^1_{\mathcal{S},T}(ad^0(\overline{\rho}))=t-1-[K:\mathbb{Q}] + h^1_{\mathcal{S}^\perp ,T}(ad^0(\overline{\rho})(1))
\end{equation*}
and since $dimR^{T-loc}_{\mathcal{S}}=1+3t+[K:\mathbb{Q}]$, it follows that:
\begin{equation} \label{eq: heart}
dimR^{T-loc}_{\mathcal{S}} + h^1_{\mathcal{S},T}(ad^0(\overline{\rho})) = h^1_{\mathcal{S}^\perp ,T}(ad^0(\overline{\rho})(1)) +4t.
\end{equation}
\end{itemize}

Let $Q$ be a finite set of Taylor--Wiles primes. As we saw, from the global deformation condition $\mathcal{S}$ we can define the (augmented) global deformation condition $\mathcal{S}_Q$. The main point is that the left hand side of the above formulas \ref{eq: diamond} and \ref{eq: heart} for $\mathcal{S}_Q$ only depends on $\mathcal{S}$.
\\

\begin{definition} \label{def: enormous subgroup}
Let $\Gamma$ be a subgroup of $SL_2(\mathbb{F})$ with absolutely irreducible action on $\mathbb{F}^2$ such that the eigenvalues of any $\gamma \in \Gamma$ are $\mathbb{F}$-rational. Let us denote the trace-zero subspace of $M_2(\mathbb{F})$ by $ad^0$ and consider it with adjoint $\Gamma$-action. We say that $\Gamma$ is enormous if it satisfies the following properties:
\begin{itemize}
\item[(1)] $\Gamma$ has no quotient of order $p$.
\item[(2)] $H^0(\Gamma,ad^0)=H^1(\Gamma,ad^0)=0$.
\item[(3)] For any simple $\mathbb{F}$-submodule $W$ of $ad^0$, there exists a $\gamma \in \Gamma$ with distinct eigenvalues such that $W^\gamma \neq 0$.
\end{itemize} 
\end{definition}

\begin{theorem} \label{th: classification of enormous subgroups}
If $\Gamma \subseteq GL_2(\mathbb{F})$ acts absolutely irreducibly and (as always) $p>2$, then $\Gamma$ will be enormous except in the following cases:
\begin{itemize}
\item[(1)] $p=3$ and image of $\Gamma$ in $PGL_2(\mathbb{F}_3)$ is conjugate to $PSL_2(\mathbb{F}_3)$.
\item[(2)] $p=5$ and image of $\Gamma$ in $PGL_2(\mathbb{F}_5)$ is conjugate to $PSL_2(\mathbb{F}_5)$.
\end{itemize}
\end{theorem}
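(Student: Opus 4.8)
The plan is to reduce the three conditions of Definition~\ref{def: enormous subgroup} to statements about the projective image $\overline{\Gamma}\subseteq PGL_2(\mathbb{F})$ of $\Gamma$, and then to run through Dickson's classification of the finite subgroups of $PGL_2(\overline{\mathbb{F}})$. This reduction is legitimate for conditions (2) and (3): scalar matrices act trivially on $ad^0$ under the adjoint action, so both the cohomology $H^i(\Gamma,ad^0)$ and the lattice of $\mathbb{F}[\Gamma]$-submodules of $ad^0$ depend only on $\overline{\Gamma}$; condition (1) is tracked in parallel on $\Gamma$ (equivalently on $\Gamma\cap SL_2(\mathbb{F})$, which has the same projective image). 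Absolute irreducibility forces $\overline{\Gamma}$ to be non-cyclic and not contained in the image of a Borel subgroup. I would first record the dichotomy that an absolutely irreducible $\overline{\Gamma}$ containing a nontrivial unipotent element $u$ must contain a conjugate of $PSL_2(\mathbb{F}_{p^r})$ for some $r\geq1$: since the normaliser of $\langle u\rangle$ lies inside a Borel subgroup and hence acts reducibly, $\overline{\Gamma}$ cannot normalise $\langle u\rangle$, so it also contains a second, opposite unipotent subgroup, and two opposite unipotent subgroups generate a conjugate of $PSL_2(\mathbb{F}_{p^r})$ for some $r$. By Dickson's classification the remaining possibilities are then: (a) $\overline{\Gamma}$ dihedral of order prime to $p$; (b) $\overline{\Gamma}\in\{A_4,S_4,A_5\}$; (c) $PSL_2(\mathbb{F}_{p^r})\subseteq\overline{\Gamma}\subseteq PGL_2(\mathbb{F}_{p^r})$ for some $r\geq1$.

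In case (a) I would write $\overline{\Gamma}=\langle T_0,s\rangle$ with $T_0$ cyclic of order $m$ coprime to $p$ and $s^2=1$, $sts^{-1}=t^{-1}$, diagonalise $T_0$, and decompose $ad^0=\mathfrak{h}\oplus V$, where $\mathfrak{h}$ is the trace-zero diagonal line (on which $T_0$ acts trivially and $s$ by $-1$) and $V$ is the span of the two off-diagonal nilpotent matrices (on which $T_0$ acts through a pair of inverse nontrivial characters, interchanged by $s$). From this $H^0(\Gamma,ad^0)=0$; moreover $p\nmid|\Gamma|$, so $H^1(\Gamma,ad^0)=0$ and condition (1) holds; and condition (3) holds using a regular element of $T_0$ (which fixes $\mathfrak{h}$) and a reflection $s$ (which fixes a nonzero vector of $V$), both of which have distinct eigenvalues. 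In case (b), $ad^0$ is the standard three-dimensional irreducible representation of the relevant finite subgroup of $SO(3)$, up to twist by the sign character; when $p\nmid|\overline{\Gamma}|$ all of $H^0,H^1$ vanish and condition (3) is immediate, so only the primes dividing $|\overline{\Gamma}|$ need attention. The only failures are: $A_4\cong PSL_2(\mathbb{F}_3)$ at $p=3$, which surjects onto $\mathbb{Z}/3$ and therefore fails condition (1) --- the first exception; and $A_5\cong PSL_2(\mathbb{F}_5)$ at $p=5$, where the Sylow $p$-subgroup is $\mathbb{Z}/5$ and the Brauer tree of the principal $5$-block shows that the trivial module and $ad^0$ occur as edges sharing the vertex labelled by the $4$-dimensional ordinary character, whence $H^1(A_5,ad^0)\neq0$ and condition (2) fails --- the second exception. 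The remaining small checks cause no trouble: for instance $H^1(S_4,ad^0)=0$ at $p=3$, because $ad^0$ is a direct summand of $\mathrm{Ind}_{S_3}^{S_4}(\mathbb{F}_3)$ and $H^1(S_3,\mathbb{F}_3)=\mathrm{Hom}(S_3,\mathbb{F}_3)=0$ by Shapiro's lemma.

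In case (c), the restriction of $ad^0$ to $PSL_2(\mathbb{F}_{p^r})$ is $\mathrm{Sym}^2(\mathrm{std})=L(2)$, irreducible of dimension $3$ because the highest weight $2$ is $\leq p-1$ for $p\geq3$; hence $H^0=0$. Condition (1) holds since $PSL_2(\mathbb{F}_{p^r})$ is simple for $p^r\geq4$ and $PGL_2$ has no quotient of odd prime order. Condition (3) holds by exhibiting a regular semisimple element with $\mathbb{F}$-rational eigenvalues which fixes the Cartan line of $ad^0$: a split $\mathrm{diag}(t,t^{-1})$ with $t\neq\pm1$ when $p^r\geq4$, and a non-split element of order $4$ (whose eigenvalues lie in $\mathbb{F}\supseteq\mathbb{F}_9$) when $p^r=3$. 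The only input that remains is condition (2), the vanishing of $H^1(PSL_2(\mathbb{F}_{p^r}),L(2))$: one invokes that this holds for every relevant pair $(p,p^r)$ other than $(5,5)$, the $A_5$ case already found to fail; the overgroup $PGL_2(\mathbb{F}_5)\cong S_5$ produces no further exception because the outer involution acts by $-1$ on the one-dimensional $H^1(A_5,ad^0)$ and so kills it. Combining cases (a), (b), (c) leaves exactly the two exceptions in the statement.

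The heart of the argument, and the main obstacle to a self-contained write-up, is case (c): one needs the exact values of $H^1(SL_2(\mathbb{F}_q),\mathfrak{sl}_2)$ and of $H^1(SL_2(\mathbb{F}_q),\mathbb{F})$ in the defining characteristic for every prime power $q$, together with the companion computations for $A_4,S_4,A_5$ at $p\in\{3,5\}$. These are genuine inputs from the modular representation theory of $SL_2$ over finite fields (generic cohomology of Chevalley groups in the sense of Cline--Parshall--Scott, and Brauer trees in the cyclic-defect cases); everything else --- Dickson's classification, the decomposition of $ad^0$ in each case, and the search for regular semisimple elements for condition (3) --- is routine. Indeed the theorem is essentially a repackaging of the classification of \emph{adequate} subgroups of $GL_2(\mathbb{F})$; see \cite{Allen}.
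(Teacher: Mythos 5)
The paper itself offers no proof of Theorem~\ref{th: classification of enormous subgroups}: it is quoted as a black box, with \cite{Allen} as the ambient reference for the section. Your outline is, in substance, the standard argument one finds there and in the adequacy literature: pass to the projective image (legitimate, since the scalars in $\Gamma$ have order prime to $p$ and act trivially on $ad^0$, so conditions (1)--(3) only see $\overline{\Gamma}$), invoke Dickson's classification, and check the three conditions case by case. You correctly identify both exceptions and, importantly, the \emph{different} mechanisms by which they fail --- $A_4\cong PSL_2(\mathbb{F}_3)$ violates condition (1) via its $\mathbb{Z}/3$ quotient, while $A_5\cong PSL_2(\mathbb{F}_5)$ violates condition (2) because $H^1(A_5,ad^0)$ is one-dimensional in characteristic $5$ --- and you correctly explain why $PGL_2(\mathbb{F}_3)$ and $PGL_2(\mathbb{F}_5)$ survive. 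The plan is sound and I find no step that would fail.

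Two places want more care in a full write-up. First, in the dihedral case your verification of condition (3) uses a regular torus element for $\mathfrak{h}$ and a reflection for $V$; but when the character $\chi$ of the index-two torus satisfies $\chi^2=1$ (projective image a Klein four-group), $V$ splits into two lines and the reflection acts by $-1$ on one of them, so that line needs a different witness (the other involutions do the job). Second, and as you yourself flag, the vanishing of $H^1(SL_2(\mathbb{F}_{p^r}),ad^0)$ for $r\geq 2$ --- notably $q=9$ at $p=3$ and $q=25$ at $p=5$ --- is a genuinely separate input (Cline--Parshall--Scott) that does not follow from the $r=1$ computations or from your Brauer-tree argument; the theorem is only as strong as that citation. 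A last cosmetic point: for $S_4$ at $p=3$ your Shapiro argument applies to the standard representation, whereas $ad^0$ could a priori be its sign twist; inflation--restriction from $A_4$ (where you have already shown $H^0=H^1=0$) gives $H^1(S_4,ad^0)=0$ uniformly and avoids the issue.
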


\begin{proposition} \label{prop: existence of TW primes Q_N}
For a fixed global deformation condition $\mathcal{S}$, let $\Gamma=\overline{\rho}(G_{K(\zeta_p)})$ be enormous and put $q=h^1_{\mathcal{S}^\perp, T}(ad^0(\overline{\rho})(1))$. Then, for any positive integer $N$, there exists a (finite) set of Taylor--Wiles primes $Q_N$ of level $N$ such that:
\begin{itemize}
\item[(1)] $|Q_N|=q$.
\item[(2)] $H^1_{\mathcal{S}_{Q_N}^\perp, T}(ad^0(\overline{\rho})(1))=0$.
\end{itemize}
\end{proposition}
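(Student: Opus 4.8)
The plan is to construct the set $Q_N$ by a Chebotarev-type argument that kills the dual Selmer group $H^1_{\mathcal{S}^\perp, T}(ad^0(\overline{\rho})(1))$ one cohomology class at a time, using the enormous hypothesis on $\Gamma = \overline{\rho}(G_{K(\zeta_p)})$ to guarantee the existence of suitable primes. First I would fix a basis $[\psi_1], \ldots, [\psi_q]$ of the $\mathbb{F}$-vector space $H^1_{\mathcal{S}^\perp, T}(ad^0(\overline{\rho})(1))$, representing each class by a cocycle $\psi_i \in Z^1(G_{K,S}, ad^0(\overline{\rho})(1))$. For each $i$, the key observation is that adding a single well-chosen Taylor--Wiles prime $v_i$ imposes a local condition at $v_i$ that is nontrivial on $[\psi_i]$, so that $[\psi_i]$ no longer lies in the Selmer group $H^1_{\mathcal{S}_{\{v_1,\ldots,v_i\}}^\perp, T}$; doing this for all $i$ produces a set $Q_N = \{v_1, \ldots, v_q\}$ of size $q$ with the dual Selmer group cut down to zero. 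This is the standard "annihilating the dual Selmer group" step of Taylor--Wiles (cf. Wiles, Taylor--Wiles, and the treatment in \cite{Allen} and \cite{DDT}).

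The technical heart is the local analysis at a Taylor--Wiles prime. For a prime $v \nmid S$ with $q_v \equiv 1 \bmod p^N$ and $\overline{\rho}(\mathrm{Frob}_v)$ having distinct $\mathbb{F}$-rational eigenvalues, one computes that $H^1(G_{K_v}, ad^0(\overline{\rho})(1))$ is one-dimensional and that the local condition defining $\mathcal{S}_Q$ at $v$ (lifts with fixed determinant, which after Proposition~\ref{prop: diagonal lift at TW primes} are diagonal) is the "unramified" condition, whose orthogonal complement under local Tate duality is all of $H^1(G_{K_v}, ad^0(\overline{\rho})(1))$. Thus to remove $[\psi_i]$ from the dual Selmer group it suffices to find $v_i$ at which the restriction $\mathrm{res}_{v_i}(\psi_i) \in H^1(G_{K_{v_i}}, ad^0(\overline{\rho})(1))$ is nonzero. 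The existence of such $v_i$ is exactly where the enormous hypothesis enters: one restricts $\psi_i$ to the subfield $L = K(\zeta_p)(ad^0(\overline{\rho}))$ (or rather $K(\zeta_{p^N}, \ldots)$ to get level $N$), over which the cocycle becomes a homomorphism, uses property (2) of enormousness, $H^1(\Gamma, ad^0) = 0$, together with the inflation–restriction sequence to see that $\psi_i$ is nonzero on $G_L$, and then uses property (3) — for any simple submodule $W \subseteq ad^0$ there is $\gamma \in \Gamma$ with distinct eigenvalues and $W^\gamma \neq 0$ — to produce a Frobenius conjugacy class $\sigma$ such that a Chebotarev prime $v_i$ with $\mathrm{Frob}_{v_i}$ in that class is a Taylor--Wiles prime of level $N$ with $\mathrm{res}_{v_i}(\psi_i) \neq 0$. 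Property (1), that $\Gamma$ has no quotient of order $p$, is used to control the relevant extensions and ensure the cohomological inputs vanish as needed.

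I would carry this out inductively: suppose $Q = \{v_1, \ldots, v_j\}$ has been chosen so that $H^1_{\mathcal{S}_Q^\perp, T}(ad^0(\overline{\rho})(1))$ has dimension $q - j$; if $j < q$ pick a nonzero class $[\psi]$ in it, find $v_{j+1}$ as above with $\mathrm{res}_{v_{j+1}}([\psi]) \neq 0$ among primes not in $Q \cup S$, and observe that the new dual Selmer group, being the kernel of an additional nonzero functional on the old one, drops in dimension by exactly one (here the numerology from \eqref{eq: diamond}, resp. \eqref{eq: heart}, together with the fact that the left-hand side for $\mathcal{S}_Q$ depends only on $\mathcal{S}$, guarantees the dimension cannot drop by more than one, so it drops by exactly one). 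After $q$ steps the dual Selmer group vanishes, and $|Q_N| = q$ as required; relabelling $Q_N := Q$ finishes the proof.

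The main obstacle I expect is the local computation and the Chebotarev bookkeeping at level $N$: one must verify carefully that the enormous conditions (2) and (3) genuinely produce a prime that is simultaneously (a) split enough that $q_v \equiv 1 \bmod p^N$ — which requires working over $K(\zeta_{p^N})$ and checking this is compatible with $\overline{\rho}$ being unramified there — (b) has $\overline{\rho}(\mathrm{Frob}_v)$ regular semisimple with rational eigenvalues, and (c) detects the class $[\psi]$. The interplay of these three constraints on the single Frobenius class, and showing that the enormous hypothesis is exactly strong enough to satisfy all of them at once, is the delicate point; the rest is the formal descent on Selmer group dimensions, which is routine given the numerology already recorded above.
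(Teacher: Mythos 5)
Your strategy — kill the dual Selmer group one class at a time by a Chebotarev argument, with the enormous hypothesis supplying the Frobenius classes — is exactly the standard Taylor--Wiles argument and is the intended proof here (the paper states this proposition without proof, deferring to \cite{Allen}). Two statements in your local analysis are wrong as written, although the conclusions you draw from them are right. First, the condition at $v\in Q$ in $\mathcal{S}_Q$ is \emph{not} the unramified condition: by construction $D_v^{\chi}$ consists of \emph{all} fixed-determinant lifts (these are allowed to ramify through $\Delta_v$ — that is the whole point of Taylor--Wiles primes), so the corresponding local subspace is all of $H^1(G_{K_v},ad^0(\overline{\rho}))$ and its annihilator under local Tate duality is $0$; that is why the dual Selmer condition at $v$ reads $\mathrm{res}_v\psi=0$. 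Second, $H^1(G_{K_v},ad^0(\overline{\rho})(1))$ is $2$-dimensional, not $1$-dimensional: since $q_v\equiv 1\bmod p$ the twist is trivial and $\overline{\rho}(\mathrm{Frob}_v)$ regular semisimple gives $h^0=h^2=1$, whence $h^1=2$ by the local Euler characteristic formula. The $1$-dimensional space is the unramified subspace $H^1(G_{k_v},ad^0(\overline{\rho}))\simeq (ad^0(\overline{\rho}))^{\mathrm{Frob}_v}$, where $\mathrm{res}_v\psi$ actually lands because $\psi$ is unramified outside $S$; this $1$-dimensionality (one linear functional per prime), rather than the numerology \eqref{eq: diamond}/\eqref{eq: heart}, is the clean reason the dual Selmer group drops by at most one dimension per prime. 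With those repairs, your use of the three enormous conditions — condition (1) to get $\overline{\rho}(G_{K(\zeta_{p^N})})=\Gamma$ so that level-$N$ primes with prescribed regular semisimple Frobenius exist, condition (2) with inflation--restriction to see $\psi|_{G_L}\neq 0$ over $L=K(\zeta_{p^N})(ad^0(\overline{\rho}))$, and condition (3) to adjust the Frobenius class so that $\psi(\sigma)$ has nonzero component on the $\gamma$-eigenvalue-$1$ line — together with the inductive count is the correct and complete argument.
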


\begin{corollary} \label{cor: corollary of existence of TW primes Q_N}
There exists a non-negative integer $q$ such that for any positive integer $N$, there is a set of Taylor--Wiles primes $Q_N$ of level $N$ and of cardinality $q$, and a surjection $R_{\mathcal{S}}^{T-loc}\llbracket X_1, \cdots , X_g\rrbracket \rightarrow R_{\mathcal{S}_{Q_N}}^T$ where:
\begin{itemize}
\item[(i)] Minimal case ($T=\emptyset, R_{\mathcal{S}}^{T-loc}=\mathcal{O}$): $g=q$.
\item[(ii)] Non-minimal case ($T \supseteq \{v: v|p\}, |T|=t$): $\dim R_{\mathcal{S}}^{T-loc} +g = q +4t$.
\end{itemize}
(compare with formulas \eqref{eq: diamond}) and \eqref{eq: heart}.)
\end{corollary}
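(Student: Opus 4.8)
The plan is to realise the required surjection as a \emph{relative} presentation of $R^T_{\mathcal{S}_{Q_N}}$ over $R^{T-loc}_{\mathcal{S}}$, whose number of variables is the relative tangent dimension, and then to pin that dimension down by comparing Selmer groups for $\mathcal{S}_{Q_N}$ and for $\mathcal{S}$ and feeding in the numerology \eqref{eq: diamond}--\eqref{eq: heart}. To set things up: the standing hypotheses (which in particular exclude the two exceptional cases of Theorem \ref{th: classification of enormous subgroups}) guarantee that $\Gamma=\overline{\rho}(G_{K(\zeta_p)})$ is enormous, so Proposition \ref{prop: existence of TW primes Q_N} applies with $q:=h^1_{\mathcal{S}^\perp,T}(ad^0(\overline{\rho})(1))$: for each $N\ge 1$ it produces a set $Q_N$ of Taylor--Wiles primes of level $N$ with $|Q_N|=q$ and $H^1_{\mathcal{S}_{Q_N}^\perp,T}(ad^0(\overline{\rho})(1))=0$. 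This $q$ is the integer of the statement, and it is independent of $N$; fix one such $Q_N$.

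Next I would build the presentation. Because $T\subseteq S$ and the local conditions $D_v$ for $v\in T$ are the same for $\mathcal{S}$ and for $\mathcal{S}_{Q_N}$, the local ring $R^{T-loc}_{\mathcal{S}_{Q_N}}$ coincides with $R^{T-loc}_{\mathcal{S}}$, and $R^T_{\mathcal{S}_{Q_N}}$ is canonically an algebra over it. Applying Proposition \ref{prop: tangent space of D_S,T} to $\mathcal{S}_{Q_N}$, the relative $\mathrm{mod}\,\mathfrak m$ cotangent space $\mathfrak m^T_{\mathcal{S}_{Q_N}}/\big((\mathfrak m^T_{\mathcal{S}_{Q_N}})^2,\mathfrak m^{T-loc}_{S}\big)$ is $\mathbb F$-dual to $H^1_{\mathcal{S}_{Q_N},T}(ad^0(\overline{\rho}))$, hence of $\mathbb F$-dimension $g:=h^1_{\mathcal{S}_{Q_N},T}(ad^0(\overline{\rho}))$. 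Lifting an $\mathbb F$-basis of this space to $g$ elements of $\mathfrak m^T_{\mathcal{S}_{Q_N}}$ and applying the complete form of Nakayama's lemma shows that these elements topologically generate $R^T_{\mathcal{S}_{Q_N}}$ as an $R^{T-loc}_{\mathcal{S}}$-algebra; sending $X_i$ to the $i$-th of them produces the surjection $R^{T-loc}_{\mathcal{S}}\llbracket X_1,\dots,X_g\rrbracket\twoheadrightarrow R^T_{\mathcal{S}_{Q_N}}$. Everything thus reduces to computing $g=h^1_{\mathcal{S}_{Q_N},T}(ad^0(\overline{\rho}))$.

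For that computation, note that $\mathcal{S}$ and $\mathcal{S}_{Q_N}$ agree except at the primes $v\in Q_N$, where $\mathcal{S}_{Q_N}$ replaces the unramified local condition (local tangent space $H^1_{\mathrm{ur}}(G_{K_v},ad^0(\overline{\rho}))$) by the unrestricted fixed-determinant condition $D^\chi_v$ (local tangent space $H^1(G_{K_v},ad^0(\overline{\rho}))$). At a Taylor--Wiles prime these groups are easy: $q_v\equiv 1\pmod p$ makes the $\mathrm{mod}\,p$ cyclotomic character trivial on $G_{K_v}$, so $ad^0(\overline{\rho})(1)\cong ad^0(\overline{\rho})$ as $G_{K_v}$-modules; since $\overline{\rho}(\mathrm{Frob}_v)$ has distinct eigenvalues one gets $\dim_{\mathbb F}H^0(G_{K_v},ad^0(\overline{\rho}))=\dim_{\mathbb F}H^2(G_{K_v},ad^0(\overline{\rho}))=1$, whence $\dim_{\mathbb F}H^1(G_{K_v},ad^0(\overline{\rho}))=2$ by the local Euler characteristic formula, while $\dim_{\mathbb F}H^1_{\mathrm{ur}}(G_{K_v},ad^0(\overline{\rho}))=\dim_{\mathbb F}H^0(G_{K_v},ad^0(\overline{\rho}))=1$. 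Thus each $v\in Q_N$ enlarges the local term by exactly one (and shrinks the dual local term by exactly one), and the Greenberg--Wiles formula — the same Euler-characteristic input underlying \eqref{eq: diamond} and \eqref{eq: heart}, compare the remark after \eqref{eq: heart} — gives
\begin{multline*}
h^1_{\mathcal{S}_{Q_N},T}(ad^0(\overline{\rho}))-h^1_{\mathcal{S}_{Q_N}^\perp,T}(ad^0(\overline{\rho})(1)) \\
= h^1_{\mathcal{S},T}(ad^0(\overline{\rho}))-h^1_{\mathcal{S}^\perp,T}(ad^0(\overline{\rho})(1))+|Q_N|.
\end{multline*}
Since $h^1_{\mathcal{S}_{Q_N}^\perp,T}(ad^0(\overline{\rho})(1))=0$ and $|Q_N|=q=h^1_{\mathcal{S}^\perp,T}(ad^0(\overline{\rho})(1))$, this yields $g=h^1_{\mathcal{S}_{Q_N},T}(ad^0(\overline{\rho}))=h^1_{\mathcal{S},T}(ad^0(\overline{\rho}))$.

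Finally I substitute the numerology for $\mathcal{S}$ itself. In the minimal case $T=\emptyset$ one has $R^{T-loc}_{\mathcal{S}}=\mathcal{O}$ and, by \eqref{eq: diamond}, $h^1_{\mathcal{S}}(ad^0(\overline{\rho}))=h^1_{\mathcal{S}^\perp}(ad^0(\overline{\rho})(1))=q$, so $g=q$. In the non-minimal case $T\supseteq\{v:v\mid p\}$ with $|T|=t$, formula \eqref{eq: heart} gives $\dim R^{T-loc}_{\mathcal{S}}+h^1_{\mathcal{S},T}(ad^0(\overline{\rho}))=h^1_{\mathcal{S}^\perp,T}(ad^0(\overline{\rho})(1))+4t=q+4t$, hence $\dim R^{T-loc}_{\mathcal{S}}+g=q+4t$, which is exactly the asserted relation (and mirrors \eqref{eq: diamond}/\eqref{eq: heart} as advertised). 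The genuinely hard input is the one already isolated as Proposition \ref{prop: existence of TW primes Q_N} — the existence, for every $N$, of a killing set $Q_N$ of the precise cardinality $q$ at arbitrarily high level, which rests on enormity of $\Gamma$ and a Chebotarev argument. Within the present deduction the only delicate point is the local bookkeeping at the Taylor--Wiles primes (that each contributes $+1$ to the local term and that its dual local condition degenerates to $0$); granting that, the corollary is Nakayama plus substitution.
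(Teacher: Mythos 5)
Your proposal is correct and follows essentially the route the paper intends: take $q=h^1_{\mathcal{S}^\perp,T}(ad^0(\overline{\rho})(1))$ and $Q_N$ from Proposition \ref{prop: existence of TW primes Q_N}, get the surjection from the relative tangent space of Proposition \ref{prop: tangent space of D_S,T} via Nakayama, and identify $g=h^1_{\mathcal{S}_{Q_N},T}(ad^0(\overline{\rho}))$ with $h^1_{\mathcal{S},T}(ad^0(\overline{\rho}))$ by the Greenberg--Wiles bookkeeping at the Taylor--Wiles primes before substituting \eqref{eq: diamond} and \eqref{eq: heart}. The paper leaves this deduction implicit (it is exactly the content of the remark that the left-hand sides of \eqref{eq: diamond} and \eqref{eq: heart} for $\mathcal{S}_Q$ depend only on $\mathcal{S}$), and your write-up supplies the same argument with the local computations made explicit.
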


\begin{definition} \label{def: TW datum}
A \textit{Taylor--Wiles datum} ($Q, \{\alpha_v\}_{v \in Q}$) is a set $Q$ of Taylor--Wiles primes and \textit{a choice} $\alpha_v$ of an eigenvalue of $\overline{\rho}(\Frob{v})$, for each $v \in Q$.
\end{definition}

As we saw in Proposition \ref{prop: diagonal lift at TW primes} and the discussion after it, for the $\mathcal{S}_Q$-type universal deformation $\rho_{\mathcal{S}_Q}^{univ}:G_{K, S \cup Q} \rightarrow GL_2(R_{\mathcal{S}_Q})$ we
 have $\rho_{\mathcal{S}_Q}^{univ}|_{G_{K_V}} \simeq \chi_{v,1} \oplus \chi_{v,2}$ for any $v \in Q$ with $\chi_{v,i} \circ Art_{K_v}|_{\mathcal{O}_{K_v}^\times}:\mathcal{O}_{K_v}^\times \rightarrow R_{\mathcal{S}_Q}^\times$ factors through $\Delta_v$, where $Art_{K_v}$ is the \textit{local Artin map} in
  \textit{local class field theory}. The choice of an eigenvalue $\alpha_v$ of $\overline{\rho}(\Frob{v})$ determines an ordering between $\chi_{v,1}$ and $\chi_{v,2}$ by $\chi_{v,1}(\Frob{v})=\alpha_v$. Hence, a 
  Taylor-Wiles datum induces an $\mathcal{O}$-algebra map $\mathcal{O}[\Delta_Q] \rightarrow R_{\mathcal{S}_Q}$ by sending $\delta \in \Delta_v$ to $\chi_{v,1}(\delta)$, and thus 
  we get an $\mathcal{O}[\Delta_Q]$-algebra structure on $R_{S_Q}$. Also, the surjection $R_{\mathcal{S}_Q} \twoheadrightarrow R_{\mathcal{S}}$ has kernel $\mathfrak{a}_Q$.
\\

Now, from what we have seen in this section, and by letting $|Q|=q$, we have the following commutative diagrams:
\begin{itemize}
\item[(i)] Minimal case ($T=\emptyset$):

\begin{equation} \label{eq: minimal}
\begin{tikzcd}
& \mathcal{O}\llbracket \mathbb{Z}_p^q\rrbracket \arrow[ldd, dashrightarrow] \arrow[d, twoheadrightarrow] \\
& \mathcal{O}[\Delta_Q] \arrow[d] \\
\mathcal{O}\llbracket X_1, \cdots ,X_g\rrbracket \arrow[r, twoheadrightarrow]
& R_{\mathcal{S}_Q}
\end{tikzcd}
\end{equation}

such that, by using isomorphism $\mathcal{O}\llbracket \mathbb{Z}_p^q\rrbracket \simeq \mathcal{O}\llbracket Y_1, \cdots, Y_q\rrbracket =:S_\infty$, for the augmentation ideal $\mathfrak{a}_\infty=\langle Y_1, \cdots, Y_q \rangle \subseteq S_\infty$ we 
have $R_{\mathcal{S}_Q}/\mathfrak{a}_\infty R_{\mathcal{S}_Q} \simeq R_{\mathcal{S}}$; and if $Q$ be as in the Corollary \ref{cor: corollary of existence of TW primes Q_N}, then $g=q$.
\item[(ii)] Non-minimal case ($T \supseteq \{v:v|p\}, |T|=t$):

Fix an isomorphism $R^T_{\mathcal{S}_Q} \simeq R_{\mathcal{S}_Q} \hat{\tensor{\mathcal{O}}} \Omega$ with $\Omega=\mathcal{O}\llbracket Z_1, \cdots , Z_{4t-1}\rrbracket$.

\begin{equation} \label{eq: non-min}
\begin{tikzcd}
& \Omega\llbracket \mathbb{Z}_p^q\rrbracket \arrow[ldd, dashrightarrow] \arrow[d, twoheadrightarrow] \\
& \Omega[\Delta_Q] \arrow[d] \\
R^{T-loc}_{\mathcal{S}}\llbracket X_1, \cdots ,X_g\rrbracket \arrow[r, twoheadrightarrow]
& R^T_{\mathcal{S}_Q}
\end{tikzcd}
\end{equation}
\end{itemize}

such that, by using the isomorphism $\Omega\llbracket \mathbb{Z}_p^q\rrbracket \simeq \Omega\llbracket Y_1, \cdots, Y_q\rrbracket =:S_\infty$, for the 
augmentation ideal $\mathfrak{a}_\infty=\langle Z_1, \cdots Z_{4t-1}, Y_1, \cdots, Y_q \rangle \subseteq S_\infty$ we have $R^T_{\mathcal{S}_Q}/\mathfrak{a}_\infty R^T_{\mathcal{S}_Q} \simeq R^T_{\mathcal{S}}$; and 
if $Q$ be as in the Corollary \ref{cor: corollary of existence of TW primes Q_N}, then $\dim R_{\mathcal{S}}^{T-loc}\llbracket X_1, \cdots , X_g\rrbracket = \dim S_\infty$ (look at \ref{subsub: global deformation} for definitions of $R^T_{\mathcal{S}}$ and $R_{\mathcal{S}}^{T-loc}$).

\subsection{Taylor--Wiles primes, Modular Side}

Before going into the modular aspects of Taylor--Wiles primes, let's recall some background about Hecke algebras and Galois representations with values in Hecke algebras.

\subsubsection{Hecke Algebras}

From now on in this chapter, we assume $K=\mathbb{Q}$ to avoid working with \textit{Hilbert modular forms}. We can assume that our modular forms are of weight $k \geq 2$ and level $\Gamma=\Gamma_1(N)$, but 
for simplicity we assume $k=2$ which is enough for our purposes. Also, assuming $N \geq 4$ makes some simplicity in the proofs of some statements, if you want. Let $S \supset \{p,\infty\} \cup \{l : l | N\}$ be 
a finite set of primes of $\mathbb{Q}$. Also, let $S_2(\Gamma,R)$ denote the space of modular forms of weight $2$ and level $\Gamma$ with coefficients in the ring $R$; hence fixing an
 isomorphism $\imath:\mathbb{C} \rightarrow \overline{\mathbb{Q}}_p$ implies an isomorphism $S_2(\Gamma,\mathbb{C}) \simeq S_2(\Gamma, \overline{\mathbb{Q}}_p)$.

\begin{definition} \label{def: Hecke algebra}
For a finite set of primes $S$ of $\mathbb{Q}$, we define the (universal) \textit{Hecke algebra} as $\mathbb{T}^{S,univ}=\mathbb{T}^S:=\mathbb{Z}[T_\ell,S_\ell]_{\ell \in \Spec(\mathbb{Z}), \ell \notin S}$. For 
a ring $A$, we define $\mathbb{T}^S_A:=\mathbb{T}^S \tensor{\mathbb{Z}} A=A[T_\ell,S_\ell]_{\ell \in \Spec(\mathbb{Z}), \ell \notin S}$. Note that if there is no confusion and $A$ is known from the context, we 
just write $\mathbb{T}^S$ instead of $\mathbb{T}^S_A$. Also, for a $\T_A^S$-module $M$, we define $\T_A^S(M)=\T^S(M):=\im(\T_A^S \rightarrow \End_A(M))$.
\end{definition}

\begin{remark}
In the above definition, we consider $T_\ell$ and $S_\ell$ just as polynomial variables, but as elements of $\T_{\mathbb{C}}^S$, they act on $S_2(\Gamma,\mathbb{C})$ as the usual \textit{Hecke operator} $T_\ell$ and 
as the \textit{diamond operator} $\langle \ell \rangle$, respectively. Via this action, $S_2(\Gamma,\mathbb{C})$  is a semisimple $\T^S_{\mathbb{C}}$-module.
\end{remark}

\begin{definition} \label{def: Hecke eigenform and eigensystem}
We say that a modular form $f \in S_2(\Gamma,\mathbb{C})$ is a \textit{Hecke-eigenform} for $\T^S_{\mathbb{C}}$, if it is an eigenvector for all Hecke operators $\{T_\ell\}_{\ell \notin S}$. Let us denote the corresponding eigenvalue 
by $a_\ell$; hence $T_\ell f=a_\ell f$ for all primes $\ell \notin S$. By an eigensystem, we mean a (surjective) ring homomorphism $\lambda_f=\lambda:\T^S_{\mathbb{C}}(S_2(\Gamma,\mathbb{C})) \rightarrow \mathbb{C}$ such 
that $\lambda_f(T_\ell)=a_\ell$ for $\ell \notin S$, where $f \in S_2(\Gamma,\mathbb{C})$ is a Hecke-eigenform for $\T^S_{\mathbb{C}}$.
\end{definition}

Now, (since $S_2(\Gamma,\mathbb{C})$ is a semisimple $\T^S$-module) the \textit{Peterson inner product} implies that each $T_\ell$ is a normal operator and hence we get the 
decomposition $\T^S_{\mathbb{C}}(S_2(\Gamma,\mathbb{C})) \simeq \displaystyle \prod_{\eigensystems} \mathbb{C}$. Then, by the isomorphism $\imath: \mathbb{C} \rightarrow \overline{\mathbb{Q}}_p$, we 
get the decomposition $\T^S_{\overline{\mathbb{Q}}_p}(S_2(\Gamma,\overline{\mathbb{Q}}_p)) \simeq \displaystyle \prod_{\eigensystems} \overline{\mathbb{Q}}_p$. Also, for any
eigensystem $\lambda:\T^S_{\overline{\mathbb{Q}}_p}(S_2(\Gamma,\overline{\mathbb{Q}}_p)) \rightarrow \overline{\mathbb{Q}}_p$ (equivalently, for any Hecke-eigenform $f \in S_2(\Gamma,\overline{\mathbb{Q}}_p)$), we
have a Galois representation $\rho_\lambda=\rho_f:G_{\mathbb{Q},S} \rightarrow GL_2(\overline{\mathbb{Q}}_p)$ such that for any prime $\ell \notin S$, the characteristic polynomial of $\rho_\lambda(\Frob{\ell})$ is given 
by $X^2-\lambda(T_\ell)X+\ell \lambda(S_\ell)$. So, by gluing these, we get a Galois representation:
\begin{equation} \label{eq: non-integral Gal. rep.}
\rho:=\prod_\lambda \rho_\lambda:G_{\mathbb{Q},S} \rightarrow GL_2(\T^S_{\overline{\mathbb{Q}}_p}(S_2(\Gamma,\overline{\mathbb{Q}}_p)))
\end{equation}
suth that for any prime $\ell \notin S$, the characteristic polynomial of $\rho(\Frob{\ell})$ is given by $X^2-T_\ell X+\ell S_\ell$.

We seek for an integral version of this story. First, let us recall the \textit{Eichler--Shimura} isomorphism:

\begin{theorem}{(Eichler--Shimura)} \label{th: Eichler--Shimura}
There is an isomorphism of \: $\T^S$-modules $M_2(\Gamma,\mathbb{C}) \oplus \overline{S_2(\Gamma,\mathbb{C})} \simeq H^1(\Gamma,\mathbb{C})$, where $M_2(\Gamma,\mathbb{C})$ is the space of all modular forms of weight $2$ and level $\Gamma$.
\end{theorem}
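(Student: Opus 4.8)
\noindent\emph{Proof idea.} Because the weight is $2$, the coefficient system is trivial, so the plan is to replace the group cohomology of $\Gamma$ by the cohomology of a modular curve and then apply Hodge theory together with the classical theory of differentials of the third kind. Take $\Gamma=\Gamma_1(N)$ with $N\geq 4$, so that $\Gamma$ is torsion-free and acts freely on the upper half-plane $\mathcal{H}$; then $Y:=\Gamma\backslash\mathcal{H}$ is an open Riemann surface which is a $K(\Gamma,1)$, whence $H^1(\Gamma,\mathbb{C})\cong H^1(Y,\mathbb{C})\cong H^1_{\mathrm{dR}}(Y,\mathbb{C})$. First I would compactify $Y$ to $X=X_1(N)$, with reduced cusp divisor $C=X\setminus Y$, and compute $H^1(Y,\mathbb{C})$ using closed meromorphic $1$-forms on $X$ with at worst logarithmic poles along $C$ modulo exact ones; combining Hodge theory on $X$ with the theory of differentials of the third kind then gives that every class is uniquely of the form $[\omega]+[\overline{\eta}]$, where $\omega$ is a differential of the first or third kind on $X$ and $\eta$ is a holomorphic differential on $X$.

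Next I would make the Eichler--Shimura map explicit. For $f\in M_2(\Gamma,\mathbb{C})$ the form $f(z)\,dz$ is $\Gamma$-invariant and holomorphic on $\mathcal{H}$, hence descends to a closed $1$-form on $Y$, and examining the $q$-expansion at each cusp shows that it extends to a differential of the third kind on $X$ --- holomorphic precisely when $f\in S_2(\Gamma,\mathbb{C})$, and with at worst simple poles along $C$ in general. For $g\in S_2(\Gamma,\mathbb{C})$ the form $\overline{g(z)\,dz}$ is closed and antiholomorphic on $Y$ and extends across $C$. This yields a $\mathbb{C}$-linear map
\[
M_2(\Gamma,\mathbb{C})\oplus\overline{S_2(\Gamma,\mathbb{C})}\longrightarrow H^1(Y,\mathbb{C}),\qquad (f,\overline{g})\longmapsto[f(z)\,dz]+[\overline{g(z)\,dz}].
\]
I would prove injectivity by pairing a hypothetical kernel element against holomorphic and antiholomorphic classes via the cup product: on the cusp-form part this reduces to positivity of the weight-$2$ Petersson pairing, while the Eisenstein contribution is isolated by inspecting residues along $C$. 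Surjectivity I would get from a dimension count: $f\mapsto f(z)\,dz$ identifies $S_2(\Gamma,\mathbb{C})$ with $H^0(X,\Omega^1_X)$, of dimension $g=\mathrm{genus}(X)$, and identifies $\mathrm{Eis}_2(\Gamma,\mathbb{C})$ with the genuine third-kind differentials modulo the first-kind ones, of dimension $\#C-1$, so the source has dimension $2g+(\#C-1)=\dim_{\mathbb{C}}H^1(Y,\mathbb{C})$.

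Finally I would check $\mathbb{T}^S$-equivariance. For $\ell\notin S$ the operators $T_\ell$ and $S_\ell$ act on $H^1(Y,\mathbb{C})$ through the Hecke correspondence on $Y$ (push--pull along its two degeneracy maps) and through the diamond automorphisms of $Y$, and on $M_2(\Gamma,\mathbb{C})$ and $S_2(\Gamma,\mathbb{C})$ through the classical double-coset operators; unwinding the correspondence at the level of differential forms shows that $f\mapsto f(z)\,dz$ and $g\mapsto\overline{g(z)\,dz}$ intertwine the two actions. I expect the main obstacle to be the analysis at the cusps --- verifying that weight-$2$ Eisenstein series account for \emph{exactly} the third-kind part of $H^1(Y,\mathbb{C})$, which requires the residue theorem on $X$ and the explicit constant terms of the relevant $E_2$-type series, the non-modularity of $E_2$ being a point to handle with care --- together with the lengthy but routine bookkeeping behind the Hecke-compatibility; once these are settled, the isomorphism of $\mathbb{T}^S$-modules follows.
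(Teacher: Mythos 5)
Your outline is correct, and it is worth noting that the paper itself gives no argument here: Theorem \ref{th: Eichler--Shimura} is simply recalled as a classical fact, so there is nothing to compare against line by line. What you propose is the standard analytic/Hodge-theoretic proof: identify $H^1(\Gamma,\mathbb{C})$ with $H^1(Y,\mathbb{C})$ (legitimate since $\Gamma_1(N)$ with $N\geq 4$ is torsion-free, so $Y$ is a $K(\Gamma,1)$), compute $H^1(Y,\mathbb{C})\cong H^0(X,\Omega^1_X(C))\oplus\overline{H^0(X,\Omega^1_X)}$ via differentials with log poles along the cusps, and match the two summands with $M_2$ and $\overline{S_2}$ by $f\mapsto f(z)\,dz$. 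Your dimension count is the right one and the subtlety you flag is exactly the real content: $\dim_{\mathbb{C}}\mathrm{Eis}_2(\Gamma_1(N))=\#C-1$ (not $\#C$, the defect coming from the residue theorem on $X$, mirrored on the modular side by the non-modularity of $E_2$ itself), which is what makes the Eisenstein part match the third-kind differentials modulo first-kind ones. The only alternative route worth mentioning, and the one most of the paper's references (Shimura, Darmon--Diamond--Taylor) actually take, is the explicit period-cocycle map $f\mapsto\bigl(\gamma\mapsto\int_{z_0}^{\gamma z_0}f(z)\,dz\bigr)$ together with the pairing against parabolic cohomology; that version most naturally yields $H^1_P(\Gamma,\mathbb{C})\simeq S_2\oplus\overline{S_2}$ and then recovers the full $H^1$ by a separate boundary/Eisenstein computation, whereas your mixed-Hodge formulation produces the full $H^1(Y,\mathbb{C})$, and hence the statement as the paper phrases it, in one step. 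Your Hecke-equivariance step is routine as you say, provided you use the same double-coset normalization on forms and on cohomology; with that caveat I see no gap.
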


Note that we have the isomorphism $H^1(\Gamma,\mathbb{C}) \simeq H^1(\Gamma,\mathbb{Z}) \tensor{\mathbb{Z}} \mathbb{C}$ as finite dimensional $\mathbb{C}$-vector spaces, and 
isomorphism $H^1(\Gamma,\mathcal{O}) \simeq H^1(\Gamma,\mathbb{Z}) \tensor{\mathbb{Z}} \mathcal{O}$ as finitely generated $\mathcal{O}$-modules. Also, we have the 
isomorphism $H^1(\Gamma,\mathcal{O}) \tensor{\mathcal{O}} \overline{\mathbb{Q}}_p \simeq H^1(\Gamma,\overline{\mathbb{Q}}_p) \simeq H^1(\Gamma,\mathbb{C})$ (which 
contains $S_2(\Gamma,\mathbb{C}) \simeq S_2(\Gamma,\overline{\mathbb{Q}}_p)$, by the Eichler--Shimura Theorem \ref{th: Eichler--Shimura}) as $\T^S_{\mathbb{C}} \simeq \T^S_{\overline{\mathbb{Q}}_p}$-modules.

Now, choose a Hecke-eigenform $f \in S_2(\Gamma,\overline{\mathbb{Q}}_p)$ and consider the composition map:
\begin{equation*}
\lambda_f:\T^S_{\overline{\mathbb{Q}}_p}(H^1(\Gamma,\overline{\mathbb{Q}}_p)) \rightarrow \T^S_{\overline{\mathbb{Q}}_p}(S_2(\Gamma,\overline{\mathbb{Q}}_p)) \rightarrow \overline{\mathbb{Q}}_p.
\end{equation*}
This map induces another map, which we call $\lambda_f$ again:
\begin{equation*}
\lambda_f:\T^S_{\mathcal{O}}(H^1(\Gamma,\mathcal{O})) \rightarrow \mathcal{O}.
\end{equation*}
Let $\mathcal{O}\rightarrow \mathbb{F}$ be the quotient map (by $\mathfrak{m}_{\mathcal{O}}$) and denote the composition of this quotient map and $\lambda_f$ by $\overline{\lambda}_f$. Also, 
let $\mathfrak{m}:=\Ker(\overline{\lambda}_f)$ which is a maximal ideal. Then, to this $\mathfrak{m}$ (equivalently, to $\overline{\lambda}_f)$) we can associate a Galois 
representation $\overline{\rho}_{\mathfrak{m}}:G_{\mathbb{Q},S} \rightarrow GL_2(\mathbb{F})$ such that for all primes $\ell \notin S$, the characteristic polynomial of $\overline{\rho}_{\mathfrak{m}}(\Frob{\ell})$ is 
given by $X^2-\overline{\lambda}_f(T_\ell)+\ell \overline{\lambda}_f(S_\ell)$, which is equal to $X^2-\lambda_f(T_\ell) X+\ell \lambda_f(S_\ell)$ modulo $\mathfrak{m}$.

\begin{definition} \label{def: non-Eisenstein ideal}
With the above notations, we say that the maximal ideal $\mathfrak{m}$ is \textit{non-Eisenstein}, if the residual representation $\overline{\rho}_{\mathfrak{m}}$ is absolutely irreducible.
\end{definition}

\begin{proposition} \label{prop: result of being non-Eisenstein}
If $\mathfrak{m}$ is non-Eisenstein, then $H^1(\Gamma,\mathcal{O})_{\mathfrak{m}}$ is a finite free $\mathcal{O}$-module. Also, since $\T^S_{\mathcal{O}}(H^1(\Gamma,\mathcal{O}))_{\mathfrak{m}} \subseteq \End_{\mathcal{O}}(H^1(\Gamma,\mathcal{O})_{\mathfrak{m}})$, we deduce that $\T^S_{\mathcal{O}}(H^1(\Gamma,\mathcal{O}))_{\mathfrak{m}}$ is $\mathcal{O}$-flat.
\end{proposition}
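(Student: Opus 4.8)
The plan is to reduce the freeness of $H^{1}(\Gamma,\mathcal{O})_{\mathfrak{m}}$ over the discrete valuation ring $\mathcal{O}$ to the vanishing, after localization at $\mathfrak{m}$, of the ``degree zero'' cohomology $H^{0}(\Gamma,\mathbb{F})$, and then to observe that this degree zero piece is \emph{Eisenstein}, so that the non-Eisenstein hypothesis kills it. First I would note that $\Gamma=\Gamma_{1}(N)$ is finitely presented, so $H^{1}(\Gamma,\mathbb{Z})$ is a finitely generated abelian group; since $\mathcal{O}$ is $\mathbb{Z}$-flat, $H^{1}(\Gamma,\mathcal{O})=H^{1}(\Gamma,\mathbb{Z})\tensor{\mathbb{Z}}\mathcal{O}$ is a finitely generated $\mathcal{O}$-module, hence so is its localization $H^{1}(\Gamma,\mathcal{O})_{\mathfrak{m}}$. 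Over $\mathcal{O}$, with uniformizer $\varpi$, a finitely generated module is free if and only if it is $\varpi$-torsion free, so it is enough to show $H^{1}(\Gamma,\mathcal{O})_{\mathfrak{m}}[\varpi]=0$.

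Next I would run a Bockstein argument: applying $H^{\bullet}(\Gamma,-)$ to $0\to\mathcal{O}\xrightarrow{\varpi}\mathcal{O}\to\mathbb{F}\to 0$ gives a $\T^{S}$-equivariant connecting homomorphism $\delta\colon H^{0}(\Gamma,\mathbb{F})\to H^{1}(\Gamma,\mathcal{O})$ whose image is exactly $H^{1}(\Gamma,\mathcal{O})[\varpi]$. Since localization at the maximal ideal $\mathfrak{m}$ of $\T^{S}_{\mathcal{O}}(H^{1}(\Gamma,\mathcal{O}))$ is exact, $H^{1}(\Gamma,\mathcal{O})_{\mathfrak{m}}[\varpi]$ is a quotient of $H^{0}(\Gamma,\mathbb{F})_{\mathfrak{m}}$, and it suffices to prove $H^{0}(\Gamma,\mathbb{F})_{\mathfrak{m}}=0$. (Implicitly one matches up maximal ideals here: all of $H^{0},H^{1},H^{2}$ are modules over the abstract Hecke ring $\T^{S}_{\mathcal{O}}$, and the localization is taken along the pullback of $\mathfrak{m}$ to $\T^{S}_{\mathcal{O}}$.)

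The key computation is the $\T^{S}$-module structure on $H^{0}(\Gamma,\mathbb{F})=\mathbb{F}$, the invariants of the trivial action. Writing $T_{\ell}=\pi_{1*}\pi_{2}^{*}$ through the two degeneracy maps $\pi_{1},\pi_{2}\colon Y_{1}(N;\ell)\to Y_{1}(N)$, each of degree $\ell+1$, on $H^{0}$ the map $\pi_{2}^{*}$ is an isomorphism and $\pi_{1*}$ is multiplication by $\deg\pi_{1}=\ell+1$, while $S_{\ell}=\langle\ell\rangle$ acts as the identity. Hence $H^{0}(\Gamma,\mathbb{F})$ is supported only at the maximal ideal $\mathfrak{m}_{\mathrm{Eis}}$ with $T_{\ell}\equiv\ell+1$ and $S_{\ell}\equiv1$, whose attached residual representation has, for every $\ell\notin S$, characteristic polynomial of $\mathrm{Frob}_{\ell}$ equal to $X^{2}-(\ell+1)X+\ell=(X-1)(X-\ell)$; thus $\overline{\rho}_{\mathfrak{m}_{\mathrm{Eis}}}\cong\mathbf{1}\oplus\overline{\varepsilon}_{p}$ is reducible, i.e.\ $\mathfrak{m}_{\mathrm{Eis}}$ is Eisenstein. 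Since $\mathfrak{m}$ is non-Eisenstein, $\overline{\rho}_{\mathfrak{m}}$ is absolutely irreducible and $\mathfrak{m}\neq\mathfrak{m}_{\mathrm{Eis}}$, so $H^{0}(\Gamma,\mathbb{F})_{\mathfrak{m}}=0$; combined with the previous step this gives $H^{1}(\Gamma,\mathcal{O})_{\mathfrak{m}}[\varpi]=0$, hence $H^{1}(\Gamma,\mathcal{O})_{\mathfrak{m}}$ is finite free over $\mathcal{O}$. For the last assertion, $\T^{S}_{\mathcal{O}}(H^{1}(\Gamma,\mathcal{O}))$ is module-finite over $\mathcal{O}$, hence semilocal, and localizing the inclusion $\T^{S}_{\mathcal{O}}(H^{1}(\Gamma,\mathcal{O}))\hookrightarrow\End_{\mathcal{O}}(H^{1}(\Gamma,\mathcal{O}))$ at $\mathfrak{m}$ embeds $\T^{S}_{\mathcal{O}}(H^{1}(\Gamma,\mathcal{O}))_{\mathfrak{m}}$ into $\End_{\mathcal{O}}(H^{1}(\Gamma,\mathcal{O})_{\mathfrak{m}})$, which is $\mathcal{O}$-free; a submodule of a free module over a discrete valuation ring is torsion free, hence $\mathcal{O}$-flat.

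I expect the only genuinely non-formal step to be the third one: identifying the Hecke action on the degree zero cohomology and recognizing that it realizes precisely the reducible system $\mathbf{1}\oplus\overline{\varepsilon}_{p}$ --- the slogan ``degree zero cohomology is Eisenstein''. Everything else (finite generation, the Bockstein sequence, the semilocal decomposition of the Hecke algebra, and the flatness deductions) is formal over the discrete valuation ring $\mathcal{O}$. In the present weight two, trivial-coefficient situation one can in fact see freeness more directly, since for $N\geq4$ the group $\Gamma$ acts freely on $\mathbb{H}$ and $H^{1}(\Gamma,\mathbb{Z})=H^{1}(Y_{1}(N),\mathbb{Z})$ is the cohomology of an open Riemann surface, hence free; but the Bockstein argument above is the one that isolates the role of the non-Eisenstein hypothesis and survives into the general cohomological setting.
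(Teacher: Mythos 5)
Your argument is correct, and it is essentially the argument the paper has in mind: the proposition is stated there without proof, but the mechanism the paper invokes later ($H^i(\Gamma,\mathbb{F})_{\mathfrak{m}}=0$ for $i\neq 1$, hence $H^1(\Gamma,\mathcal{O})_{\mathfrak{m}}$ is $\varpi$-torsion-free, hence free over the discrete valuation ring $\mathcal{O}$) is exactly your Bockstein reduction, with the vanishing of the localized $H^0$ coming from the fact that degree-zero cohomology carries only the Eisenstein eigensystem $T_\ell\mapsto \ell+1$, $S_\ell\mapsto 1$. Your closing observation that in this torsion-free, weight-two, trivial-coefficient setting $H^1(\Gamma,\mathbb{Z})$ is already free (so the non-Eisenstein hypothesis is only genuinely needed in more general cohomological situations) is also accurate.
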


We have $\T^S_{\mathcal{O}}(H^1(\Gamma,\mathcal{O}))_{\mathfrak{m}} \hookrightarrow \T^S_{\mathcal{O}}(H^1(\Gamma,\mathcal{O}))_{\mathfrak{m}} \tensor{\mathcal{O}} \overline{\mathbb{Q}}_p \simeq \prod \overline{\mathbb{Q}}_p$, where the product is over all eigensystems above $\mathfrak{m}$. So, 
we get a Galois representation $\rho:G_{\mathbb{Q},S} \rightarrow GL_2(\T^S_{\mathcal{O}}(H^1(\Gamma,\mathcal{O}))_{\mathfrak{m}} \tensor{\mathcal{O}} \overline{\mathbb{Q}}_p)$ such that the characteristic polynomial 
of $\rho(\Frob{\ell})$ is $X^2-T_\ell X +\ell S_\ell$. This representation descends (by a theorem of \textit{Carayol}, which we do not state it here) to a representation as follows:
\begin{equation} \label{eq: integral Gal. rep.}
\rho_{\mathfrak{m}}:G_{\mathbb{Q},S} \rightarrow GL_2(\T^S_{\mathcal{O}}(H^1(\Gamma,\mathcal{O}))_{\mathfrak{m}}).
\end{equation}

\subsubsection{Back to: Taylor--Wiles Primes, Modular Side}

Let $\mathcal{O}$ and $\mathbb{F}$ be as before and $p>2$. Fix an absolutely irreducible representation $\overline{\rho}:G_{\mathbb{Q},S} \rightarrow GL_2(\mathbb{F})$. Assume that $\overline{\rho} \simeq \overline{\rho}_g$ for a Hecke-eigenform $g \in S_2(\Gamma,\mathcal{O})$ (equivalently, 
assume $\overline{\rho}$ has one modular lift, which is $\rho_g$ in fact), and assume $\Gamma_1(N) \leq \Gamma \leq \Gamma_0(N)$ such that $\{\ell:\ell |N\} \subseteq S$ and $\Gamma$ is torsion-free.

\begin{definition} \label{def: defining some subgroups}
For a finite set of Taylor--Wiles primes $Q$, the subgroups $\Gamma_1(Q) \leq \Gamma_Q \leq \Gamma_0(Q) \leq \Gamma$ defined as follows:
\begin{itemize}
\item[(i)] $\Gamma_0(Q):=\Gamma \cap \Gamma_0(\prod_{v \in Q} v)$.
\item[(ii)] $\Gamma_1(Q):=\Gamma \cap \Gamma_1(\prod_{v \in Q} v)$.
\item[(iii)] \: $\Gamma_Q$ is the kernel of the map $\Gamma_0(Q) \rightarrow \Xi$, where $\Xi$ is the maximal $p$-power quotient of $\Gamma_0(Q)/\Gamma_1(Q)$, which is isomorphic with $\prod_{v \in Q} (\mathbb{Z}/v\mathbb{Z})^\times$. So, $\Xi \simeq \Delta_Q$.
\end{itemize}
\end{definition}

Recall that $\T^S_{\mathcal{O}}=\T^S=\mathcal{O}[T_\ell,S_\ell]$. For a subset $\Sigma \subseteq S$, we also define $\T^{S, \Sigma}:=\T^S[\{U_v\}_{v \in \Sigma}]$. Note again that, $T_\ell$, $S_\ell$, and $U_v$ are 
just polynomial variables, but these universal Hecke algebras act on the spaces of modular forms, and on homology and cohomology of congruence subgroups and modular curves attached to them. 
Let $\T^S(\Gamma):=\T^S(H^1(\Gamma,\mathcal{O}))=\im(\T^S \rightarrow \End_{\mathcal{O}}(H^1(\Gamma,\mathcal{O})))$ and $\T^{S, \Sigma}(\Gamma):=\T^{S, \Sigma}(H^1(\Gamma,\mathcal{O}))=\im(\T^{S, \Sigma} \rightarrow \End_{\mathcal{O}}(H^1(\Gamma,\mathcal{O})))$. As we 
assumed $\overline{\rho} \simeq \overline{\rho}_g$ for a Hecke-eigenform $g \in S_2(\Gamma,\mathcal{O})$, we obtain a maximal ideal $\mathfrak{m}$ of $\T^S(\Gamma)$ which can be considered as a maximal 
ideal of $\T^S$, again denoted by $\mathfrak{m}$, in the support of $H^1(\Gamma,\mathcal{O})$. Now, consider the action of $\T^S(\Gamma)_{\mathfrak{m}}$ on $H^1(\Gamma,\mathcal{O})_\mathfrak{m} \simeq H^1(Y,\mathcal{O})_{\mathfrak{m}}$ for $Y=Y(\Gamma)=\Gamma \symbol{92} \mathbb{H}$. We 
have that for $i \neq 1$, $H^i(\Gamma,\mathbb{F})_{\mathfrak{m}}=0$ and hence $H^1(\Gamma,\mathcal{O})_{\mathfrak{m}} \simeq H^1(Y,\mathcal{O})_{\mathfrak{m}}$ is torsion-free. So, we have the 
duality $H^1(Y,\mathcal{O})_{\mathfrak{m}}=\Hom{\mathcal{O}}(H_1(Y,\mathcal{O})_{\mathfrak{m}},\mathcal{O})$ as $\T^S$-modules and transposition identifies $\T^S(\Gamma)_{\mathfrak{m}}$ with $\im(\T^S_{\mathfrak{m}} \rightarrow \End_{\mathcal{O}}(H_1(Y,\mathcal{O})_{\mathfrak{m}}))$.

Recall that we have a fixed Taylor--Wiles datum $(Q,\{\alpha_v\}_{v \in Q})$. We can pull back $\mathfrak{m} \subseteq \T^S$ to a maximal ideal of $\T^{S \cup Q}$ which we denote it again by $\mathfrak{m}$. Now, for 
each $v \in Q$, we have that $X^2-T_vX+vS_v \in \T^S[X]$ is congruent to $(X-\alpha_v)(X-\beta_v)$ modulo $\mathfrak{m}$, and the latter is the Hecke polynomial of $\overline{g} \in S_2(\Gamma,\mathbb{F})$ (note 
that $\overline{g} \equiv g \mod(\varpi)$, where $\varpi$ is a fixed uniformizer of $\mathcal{O}$). By the theory of old forms, we know that there is an $\overline{h} \in S_2(\Gamma_0(Q),\mathbb{F})$ that has the 
same $T_\ell$-eigenvalues and $S_\ell$-eigenvalues as $\overline{g}$ for all primes $\ell \notin S \cup Q$ and that $U_v\overline{g}=\alpha_v\overline{g}$ for all $v \in Q$. Thus, by choosing any 
lift $\tilde{\alpha}_v \in \mathcal{O}$ of $\alpha_v$ for all $v \in Q$, we get a maximal ideal $\mathfrak{m}_Q:=\langle \mathfrak{m}, \{U_v - \tilde{\alpha}_v \}_{v \in Q} \rangle$ of $\T^{S \cup Q, Q}$ and both maximal 
ideals $\mathfrak{m} \subseteq \T^{S \cup Q}$ and $\mathfrak{m}_Q \subseteq \T^{S \cup Q, Q}$ are in the support of $H^1(Y_0(Q),\mathcal{O})$ and $H_1(Y_0(Q),\mathcal{O})$. Also, we again have the duality between 
homology and cohomology, after localizing at either $\mathfrak{m}$ or $\mathfrak{m}_Q$. Note also that since $\T^{S \cup Q}(\Gamma_0(Q))$ and $\T^{S \cup Q, Q}(\Gamma_0(Q))$ are finite $\mathcal{O}$-algebras, 
so $\T^{S \cup Q}(\Gamma_0(Q))_{\mathfrak{m}}$ is a complete Noetherian local ring; hence the localization of $\T^{S \cup Q, Q}(\Gamma_0(Q))$ at $\mathfrak{m} \subseteq \T^{S \cup Q}(\Gamma_0(Q))$ is a 
complete semilocal ring, and thus it is a product of its local rings of which $\T^{S \cup Q,Q}(\Gamma_0(Q))_{\mathfrak{m}_Q}$ is one. In particular, $H_1(Y_0(Q),\mathcal{O})_{\mathfrak{m}_Q}$ is 
a direct summand of $H_1(Y_0(Q),\mathcal{O})_{\mathfrak{m}}$. Note that similar statements hold, when we replace $\Gamma_0(Q)$ by $\Gamma_Q$.

\begin{proposition} \label{prop: first iso of homologies}
The natural map $H_1(Y_0(Q),\mathcal{O}) \rightarrow H_1(Y,\mathcal{O})$ induces an isomorphism $H_1(Y_0(Q),\mathcal{O})_{\mathfrak{m}_Q} \simeq H_1(Y,\mathcal{O})_{\mathfrak{m}}$ of $\T^{S \cup Q}$-modules.
\end{proposition}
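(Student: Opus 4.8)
The plan is to work one Taylor--Wiles prime at a time. The covering $Y_0(Q)\to Y$ factors as a tower
\[
Y_0(Q)\longrightarrow Y_0(Q\setminus\{v\})\longrightarrow\cdots\longrightarrow Y ,
\]
each intermediate curve being a quotient of $\mathbb H$ by a torsion-free congruence subgroup to which all of our standing hypotheses apply (non-Eisenstein, so Proposition \ref{prop: result of being non-Eisenstein} gives freeness of the $\mathfrak m$-localized homology; the conditions of Definition \ref{def: Taylor--Wiles primes} for each $v\in Q$; and the chosen lifts $\widetilde\alpha_v$). So by induction on $|Q|$ it suffices to treat the bottom step: we may assume $Q=\{v\}$, rename $Y_0(Q\setminus\{v\})$ back to $Y$ and $\mathfrak m_{Q\setminus\{v\}}$ back to $\mathfrak m$, and prove that the pushforward $\pi_{1,*}\colon H_1(Y_0(v),\mathcal O)_{\mathfrak m_Q}\to H_1(Y,\mathcal O)_{\mathfrak m}$ along the covering $\pi_1\colon Y_0(v)\to Y$ is an isomorphism of $\T^{S\cup Q}$-modules.

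Over $\overline{\mathbb Q}_p$ the situation is transparent. Let $\iota_1,\iota_v\colon H_1(Y,\overline{\mathbb Q}_p)_{\mathfrak m}\to H_1(Y_0(v),\overline{\mathbb Q}_p)_{\mathfrak m}$ be the two degeneracy (transfer) maps; their images span the ``$v$-old'' part, on which, with the usual normalizations, $U_v$ acts via $U_v\iota_v=\iota_1$ and $U_v\iota_1=T_v\iota_1-vS_v\iota_v$, i.e.\ by the companion matrix of $X^2-T_vX+vS_v$ in the ordered basis $(\iota_1,\iota_v)$. The ``$v$-new'' part localized at $\mathfrak m$ vanishes: a $v$-new Hecke eigensystem lying over $\mathfrak m$ would yield a modular lift of $\overline\rho$ whose restriction to $G_{\mathbb Q_v}$ is an unramified twist of the Steinberg representation, which forces the two eigenvalues of $\overline\rho(\mathrm{Frob}_v)$ to be in ratio $q_v$; but $q_v\equiv 1\pmod p$, so those eigenvalues coincide, contradicting condition (ii) of Definition \ref{def: Taylor--Wiles primes}. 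Hence $(\iota_1,\iota_v)$ is an isomorphism $H_1(Y,\overline{\mathbb Q}_p)_{\mathfrak m}^{\oplus 2}\xrightarrow{\ \sim\ }H_1(Y_0(v),\overline{\mathbb Q}_p)_{\mathfrak m}$. Modulo $\mathfrak m$ we have $X^2-T_vX+vS_v\equiv(X-\alpha_v)(X-\beta_v)$ with $\alpha_v\neq\beta_v$, both units (as $vS_v\equiv q_v\cdot(\text{unit})$); so by Hensel's lemma the quadratic factors as $(X-\mathbf a)(X-\mathbf b)$ over $\T^{S\cup Q,Q}(\Gamma_0(v))_{\mathfrak m}$ with $\mathbf a\equiv\alpha_v$, $\mathbf b\equiv\beta_v$, the $v$-old part splits as the direct sum of the two $U_v$-eigenmodules, and the $\mathfrak m_Q$-localization (imposing $U_v=\mathbf a$) picks out the $\mathbf a$-eigenmodule, which over $\overline{\mathbb Q}_p$ is the line spanned by $v_{\mathbf a}:=\iota_1-\mathbf b\,\iota_v$. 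In particular $H_1(Y_0(v),\mathcal O)_{\mathfrak m_Q}$ and $H_1(Y,\mathcal O)_{\mathfrak m}$ have the same $\overline{\mathbb Q}_p$-rank.

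Now conclude integrally. By Proposition \ref{prop: result of being non-Eisenstein}, $H_1(Y_0(v),\mathcal O)_{\mathfrak m_Q}$ and $H_1(Y,\mathcal O)_{\mathfrak m}$ are free $\mathcal O$-modules, of equal finite rank by the previous paragraph, so it suffices to show $\pi_{1,*}$ is surjective. Writing $\mathrm{pr}_{\mathbf a}=(U_v-\mathbf b)(\mathbf a-\mathbf b)^{-1}$ for the idempotent onto the $\mathbf a$-part, the relations above give $(U_v-\mathbf b)\iota_1=\mathbf a\iota_1-vS_v\iota_v=\mathbf a\,v_{\mathbf a}$ on $H_1(Y,\mathcal O)_{\mathfrak m}$ (using $vS_v=\mathbf a\mathbf b$), hence $\mathrm{pr}_{\mathbf a}\iota_1=\tfrac{\mathbf a}{\mathbf a-\mathbf b}\,v_{\mathbf a}$; and since $\pi_{1,*}\iota_1=q_v+1=\deg\pi_1$ while $\pi_{1,*}\iota_v$ is the adjoint ($=$ transpose) Hecke operator at $v$, which acts on the $\mathfrak m_Q$-localization by $v(\mathbf a+\mathbf b)(\mathbf a\mathbf b)^{-1}$, one computes
\[
\pi_{1,*}\bigl(\mathrm{pr}_{\mathbf a}\iota_1\bigr)=\tfrac{\mathbf a}{\mathbf a-\mathbf b}\Bigl((q_v+1)-\mathbf b\cdot\tfrac{v(\mathbf a+\mathbf b)}{\mathbf a\mathbf b}\Bigr)=\tfrac{\mathbf a}{\mathbf a-\mathbf b}\Bigl(1-\tfrac{q_v\mathbf b}{\mathbf a}\Bigr),
\]
which is $\equiv 1\pmod{\mathfrak m_Q}$, hence a unit in $\T^{S\cup Q,Q}(\Gamma_0(v))_{\mathfrak m_Q}$. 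Thus $\mathrm{pr}_{\mathbf a}\iota_1$ is a section of $\pi_{1,*}$ up to a unit, so $\pi_{1,*}$ is surjective; being a surjection of free $\mathcal O$-modules of the same finite rank it is an isomorphism, and it is $\T^{S\cup Q}$-linear since $\pi_{1,*}$ commutes with all $T_\ell,S_\ell$ for $\ell\notin S\cup Q$. Composing the single-prime isomorphisms up the tower proves the proposition for arbitrary $Q$.

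I expect the crux to be the vanishing of the $\mathfrak m$-localized $v$-new part: this is exactly where both Taylor--Wiles conditions are genuinely used --- the Steinberg-type local shape of a $v$-new lift together with $q_v\equiv 1\pmod p$ forces the Frobenius eigenvalues of $\overline\rho$ at $v$ to collide, contradicting their distinctness --- while everything else is bookkeeping with the degeneracy maps and the (transpose) Hecke normalizations, whose sole purpose is to make the scalar in the surjectivity computation a genuine unit rather than something like $q_v\pm 1$.
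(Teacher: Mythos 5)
The paper states this proposition without giving any proof, so there is nothing in the text to measure your argument against; I can only judge it on its own merits. What you have written is the standard argument (the one in Darmon--Diamond--Taylor and in Allen's notes, which this chapter follows): reduce to one Taylor--Wiles prime at a time; kill the $v$-new part of the $\mathfrak m$-localization because a $v$-newform of weight $2$ and level exactly divisible by $v$ would, by local-global compatibility, force the Frobenius eigenvalues of $\overline\rho$ at $v$ into ratio $q_v\equiv 1\pmod p$ and hence to coincide, contradicting Definition \ref{def: Taylor--Wiles primes}(ii); split the remaining $v$-old part into two $U_v$-eigenfactors by Hensel's lemma using $\alpha_v\neq\beta_v$; and identify the $\alpha_v$-factor, which is exactly the $\mathfrak m_Q$-localization, with $H_1(Y,\mathcal O)_{\mathfrak m}$. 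This is correct, and you have correctly isolated where each of the two Taylor--Wiles conditions is genuinely used.

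Two points should be pinned down. First, the explicit identities you compute with ($U_v\iota_1=T_v\iota_1-vS_v\iota_v$, $\pi_{1,*}\iota_v$ equal to the transpose Hecke operator acting by $v(\mathbf a+\mathbf b)(\mathbf a\mathbf b)^{-1}$) depend on a choice of normalization of the two degeneracy maps; with any fixed standard convention the final scalar is a unit whose reduction modulo the maximal ideal is $1$ or a unit multiple of $(\alpha_v-\beta_v)^2$, and in every version only $q_v\equiv 1\pmod p$ and $\alpha_v\neq\beta_v$ are used, so nothing breaks --- but as written the computation is not anchored to a definition of $\iota_1,\iota_v$. Second, you assert but do not prove that $(\iota_1,\iota_v)$ is injective after localizing at $\mathfrak m$ over $\overline{\mathbb Q}_p$; your rank count needs this. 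The cleanest repair folds it into the same computation: the composite of $(\iota_1,\iota_v)$ with the pair of pushforwards is a $2\times 2$ matrix over $\T^{S}(\Gamma)_{\mathfrak m}$ whose determinant reduces modulo $\mathfrak m$ to $-(\alpha_v-\beta_v)^2/(\alpha_v\beta_v)$ (again using $q_v\equiv 1$), hence is a unit; this single determinant gives the injectivity, the Hensel splitting of the old part, and your surjectivity estimate all at once. With those two items made precise the proof is complete.
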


\begin{proposition} \label{prop: second iso of homologies}
The homology group $H_1(Y_Q,\mathcal{O})_{\mathfrak{m}_Q}$ is a free $\mathcal{O}[\Delta_Q]$-module and the natural map $H_1(Y_Q,\mathcal{O})_{\mathfrak{m}_Q} \rightarrow H_1(Y_0(Q),\mathcal{O})_{\mathfrak{m}_Q}$ induces an isomorphism from 
the $\Delta_Q$-coinvariants of $H_1(Y_Q,\mathcal{O})_{\mathfrak{m}_Q}$ to $H_1(Y_0(Q),\mathcal{O})_{\mathfrak{m}_Q}$, i.e. $H_0(\Delta_Q, H_1(Y_Q,\mathcal{O})_{\mathfrak{m}_Q}) \simeq H_1(Y_0(Q),\mathcal{O})_{\mathfrak{m}_Q}$ (by the $\Delta_Q$-coinvariant of an $\mathcal{O}[\Delta_Q]$-module $M$, we mean $H_0(\Delta_Q,M)=M/\mathfrak{a}M$, where $\mathfrak{a}$ is the augmentation ideal).
\end{proposition}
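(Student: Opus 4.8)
\emph{Setup.} Since $\Gamma_Q$ is the kernel of $\Gamma_0(Q)\twoheadrightarrow\Delta_Q$ and $\Gamma$ (hence $\Gamma_0(Q)$) is torsion-free, the projection $\pi\colon Y_Q\to Y_0(Q)$ is a finite Galois covering with deck group $\Delta_Q$ acting \emph{freely}. The plan is to realize $H_1(Y_Q,\mathcal{O})_{\mathfrak{m}_Q}$ as the single nonzero homology module of a perfect complex of $\mathcal{O}[\Delta_Q]$-modules and to read off both assertions from there. Concretely, fix a finite simplicial (or CW) structure on the open curve $Y_0(Q)$ and pull it back to a $\Delta_Q$-equivariant structure on $Y_Q$; because $\Delta_Q$ permutes the cells of $Y_Q$ freely, the cellular chain complex $C_\bullet:=C_\bullet^{\mathrm{CW}}(Y_Q,\mathcal{O})$ is a bounded complex of finite \emph{free} $\mathcal{O}[\Delta_Q]$-modules computing $H_\bullet(Y_Q,\mathcal{O})$, and $C_\bullet\otimes_{\mathcal{O}[\Delta_Q]}\mathcal{O}=C_\bullet^{\mathrm{CW}}(Y_0(Q),\mathcal{O})$ (the augmentation map on cells is exactly $\pi_*$). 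The Hecke operators in $\T^{S\cup Q,Q}$ act on this whole picture compatibly: they act on $Y_Q$ by correspondences commuting with the deck action and descend to $Y_0(Q)$. Hence I may localize everything at $\mathfrak{m}_Q$: the complex $C_\bullet(Y_Q,\mathcal{O})_{\mathfrak{m}_Q}$ is still a bounded complex of finite free $\mathcal{O}[\Delta_Q]$-modules (a direct summand of a free complex, and finite projective over the local ring $\mathcal{O}[\Delta_Q]$ is free), with $C_\bullet(Y_Q,\mathcal{O})_{\mathfrak{m}_Q}\otimes_{\mathcal{O}[\Delta_Q]}\mathcal{O}=C_\bullet(Y_0(Q),\mathcal{O})_{\mathfrak{m}_Q}$ and $C_\bullet(Y_Q,\mathcal{O})_{\mathfrak{m}_Q}\otimes_{\mathcal{O}[\Delta_Q]}\mathbb{F}=C_\bullet(Y_0(Q),\mathbb{F})_{\mathfrak{m}_Q}$.

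\emph{Concentration in degree one.} Next I would invoke the vanishing already recorded (for $\Gamma$) just before Proposition~\ref{prop: first iso of homologies}, applied now to $\Gamma_Q$ and $\Gamma_0(Q)$: these are aspherical non-compact modular curves, so $H_i(\,\cdot\,)=0$ for $i\ge2$, while $H_0$ is the trivial (Eisenstein) eigenspace and is killed by localizing at the non-Eisenstein ideal $\mathfrak{m}_Q$ (it lies over the non-Eisenstein $\mathfrak{m}$, which is non-Eisenstein because $\overline{\rho}\simeq\overline{\rho}_{\mathfrak{m}}$ is absolutely irreducible). Thus $H_i(Y_Q,\mathcal{O})_{\mathfrak{m}_Q}=0$ for $i\ne1$, and likewise $H_i(Y_0(Q),\mathbb{F})_{\mathfrak{m}_Q}=0$ for $i\ne1$. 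Writing $M:=H_1(Y_Q,\mathcal{O})_{\mathfrak{m}_Q}$ — which is automatically $\mathcal{O}$-free, being a submodule of the finite free $\mathcal{O}$-module $(C_1)_{\mathfrak{m}_Q}$ over the DVR $\mathcal{O}$ — the complex $C_\bullet(Y_Q,\mathcal{O})_{\mathfrak{m}_Q}$ has homology concentrated in degree $1$, hence is quasi-isomorphic in $D(\mathcal{O}[\Delta_Q])$ to $M$ placed in degree $1$. Since it is moreover a complex of free modules, for every $\mathcal{O}[\Delta_Q]$-module $N$ one obtains $H_i\!\big(C_\bullet(Y_Q,\mathcal{O})_{\mathfrak{m}_Q}\otimes_{\mathcal{O}[\Delta_Q]}N\big)\cong\mathrm{Tor}^{\mathcal{O}[\Delta_Q]}_{i-1}(M,N)$.

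\emph{Freeness and the coinvariants isomorphism.} Taking $N=\mathbb{F}$ and using the last identification of the first paragraph gives $\mathrm{Tor}^{\mathcal{O}[\Delta_Q]}_{i-1}(M,\mathbb{F})\cong H_i(Y_0(Q),\mathbb{F})_{\mathfrak{m}_Q}$, which vanishes for $i\ne1$; hence $\mathrm{Tor}^{\mathcal{O}[\Delta_Q]}_{j}(M,\mathbb{F})=0$ for all $j\ge1$. Now $\mathcal{O}[\Delta_Q]$ is a Noetherian local ring — $\Delta_Q$ is a finite $p$-group and $\mathcal{O}$ is local of residue characteristic $p$, so the augmentation ideal is topologically nilpotent and $\mathfrak{m}_{\mathcal{O}[\Delta_Q]}=(\varpi,\mathfrak{a}_Q)$ with residue field $\mathbb{F}$ — and $M$ is finitely generated, so the vanishing of $\mathrm{Tor}_1$ against the residue field is precisely the local freeness criterion: $M$ is a free $\mathcal{O}[\Delta_Q]$-module. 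Taking instead $N=\mathcal{O}=\mathcal{O}[\Delta_Q]/\mathfrak{a}_Q$ gives $H_i(Y_0(Q),\mathcal{O})_{\mathfrak{m}_Q}\cong\mathrm{Tor}^{\mathcal{O}[\Delta_Q]}_{i-1}(M,\mathcal{O})=H_{i-1}(\Delta_Q,M)$; the case $i=1$ reads $H_1(Y_0(Q),\mathcal{O})_{\mathfrak{m}_Q}\cong M/\mathfrak{a}_QM=H_0(\Delta_Q,M)$, and since the quasi-isomorphism above is induced by the chain-level map $\pi_*$ (the augmentation quotient), this isomorphism is induced by the natural map $H_1(Y_Q,\mathcal{O})_{\mathfrak{m}_Q}\to H_1(Y_0(Q),\mathcal{O})_{\mathfrak{m}_Q}$, as required. (Equivalently one can run the Cartan--Leray/Hochschild--Serre homology spectral sequence $E^2_{pq}=H_p(\Delta_Q,H_q(Y_Q,\mathcal{O})_{\mathfrak{m}_Q})\Rightarrow H_{p+q}(Y_0(Q),\mathcal{O})_{\mathfrak{m}_Q}$, which degenerates by the concentration in degree one to give $H_{n-1}(\Delta_Q,M)\cong H_n(Y_0(Q),\mathcal{O})_{\mathfrak{m}_Q}$; the case $n=1$ is the coinvariants statement and the vanishing for $n\ge2$ yields $H_j(\Delta_Q,M)=0$ for $j\ge1$, whence freeness as above.)

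\emph{Main obstacle.} The homological endgame is routine; the real work is in the two non-formal inputs. First, setting up $C_\bullet(Y_Q,\mathcal{O})_{\mathfrak{m}_Q}$ as a genuinely \emph{perfect} complex of $\mathcal{O}[\Delta_Q]$-modules requires a $\Delta_Q$-equivariant cell structure and the compatibility of the $U_v$-action with the $\Delta_Q$-action and with descent to $Y_0(Q)$, so that localization at $\mathfrak{m}_Q$ commutes with $(-)\otimes_{\mathcal{O}[\Delta_Q]}\mathcal{O}$ and $(-)\otimes_{\mathcal{O}[\Delta_Q]}\mathbb{F}$. Second, the whole argument rests on the concentration-in-degree-one results at all three levels $\Gamma,\Gamma_0(Q),\Gamma_Q$, i.e. on the non-Eisenstein hypothesis propagating from $\mathfrak{m}$ to $\mathfrak{m}_Q$; granting these, the passage from $\mathrm{Tor}^{\mathcal{O}[\Delta_Q]}_{\ge1}(M,\mathbb{F})=0$ to freeness is immediate.
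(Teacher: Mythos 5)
The paper states this proposition without proof, so there is no in-text argument to compare against; your proposal is essentially the standard proof, and it is the one given in the cited source [Allen] for this section. The skeleton is right: $\Gamma_0(Q)\subseteq\Gamma$ is torsion-free, so $\Delta_Q$ acts freely on $Y_Q$ and $C_\bullet(Y_Q,\mathcal{O})$ is a bounded complex of finite free $\mathcal{O}[\Delta_Q]$-modules with $C_\bullet\otimes_{\mathcal{O}[\Delta_Q]}\mathcal{O}=C_\bullet(Y_0(Q),\mathcal{O})$ and $C_\bullet\otimes_{\mathcal{O}[\Delta_Q]}\mathbb{F}=C_\bullet(Y_0(Q),\mathbb{F})$; the non-Eisenstein hypothesis (which does propagate from $\mathfrak{m}$ to $\mathfrak{m}_Q$, since the residual representation is the same absolutely irreducible $\overline{\rho}$) kills $H_0$ after localization, $H_2$ vanishes because the curves are open, and then $\mathrm{Tor}^{\mathcal{O}[\Delta_Q]}_1(M,\mathbb{F})=0$ gives freeness over the local ring $\mathcal{O}[\Delta_Q]$ while $\mathrm{Tor}_0(M,\mathcal{O})$ gives the coinvariants isomorphism, induced by $\pi_*$.

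Two points deserve more care than you give them. First, the localization of the \emph{complex} at $\mathfrak{m}_Q$: the Hecke operators are defined by correspondences and do not literally act on a chosen cellular chain complex, so "a direct summand of a free complex" is not immediate. The clean fixes are either to compute $H_*(Y_Q,\mathcal{O})=H_*(\Gamma_Q,\mathcal{O})$ from a finite free resolution over $\mathcal{O}[\Gamma_0(Q)]$, where the double-coset Hecke action is defined at the chain level compatibly with $\Delta_Q$, or to work in $D(\mathcal{O}[\Delta_Q])$ and use that a direct summand of a perfect complex over the local ring $\mathcal{O}[\Delta_Q]$ is again represented by a bounded complex of finite free modules (minimal resolution); you correctly flag this as the main obstacle but should commit to one of these resolutions. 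Second, your parenthetical alternative via the Cartan--Leray spectral sequence is not quite a substitute as written: vanishing of $H_j(\Delta_Q,M)$ for $j\ge1$ with $\mathcal{O}$-coefficients does not by itself yield freeness over $\mathcal{O}[\Delta_Q]$ --- one needs the mod-$\varpi$ statement $\mathrm{Tor}_1^{\mathcal{O}[\Delta_Q]}(M,\mathbb{F})=0$, which requires an extra step (e.g.\ the long exact sequence for $\varpi$ together with the $\mathcal{O}$-torsion-freeness of $H_1(Y_0(Q),\mathcal{O})_{\mathfrak{m}_Q}$ recorded in the text). Your primary argument, taking $N=\mathbb{F}$ directly, avoids this and is the one to keep.
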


By combining the two above propositions, we get the following corollary:

\begin{corollary} \label{cor: third iso of homologies}
The natural map $H_1(Y_Q,\mathcal{O})_{\mathfrak{m}_Q} \rightarrow H_1(Y,\mathcal{O})_{\mathfrak{m}}$ induces an isomorphism from the $\Delta_Q$-coinvariants of $H_1(Y_Q,\mathcal{O})_{\mathfrak{m}_Q}$ to $H_1(Y,\mathcal{O})_{\mathfrak{m}}$.
\end{corollary}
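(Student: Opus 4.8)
The plan is to obtain the claimed isomorphism simply by \emph{composing} the two isomorphisms of Propositions \ref{prop: first iso of homologies} and \ref{prop: second iso of homologies}, so that the only real work is to check that the composite is the map ``induced by the natural map $H_1(Y_Q,\mathcal{O})_{\mathfrak{m}_Q}\to H_1(Y,\mathcal{O})_{\mathfrak{m}}$'' asserted in the statement. First I would recall that the three modular curves fit into a tower of finite coverings $Y_Q\to Y_0(Q)\to Y$ (unramified over $Y$ away from cusps/elliptic points, which is harmless since $\Gamma$ is torsion-free), and that by covariance of $H_1$ the natural map $H_1(Y_Q,\mathcal{O})\to H_1(Y,\mathcal{O})$ is the composite of the pushforwards $H_1(Y_Q,\mathcal{O})\to H_1(Y_0(Q),\mathcal{O})\to H_1(Y,\mathcal{O})$ that appear in those two propositions.

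Next I would pass to localizations. Since $\mathfrak{m}_Q\subseteq\T^{S\cup Q,Q}$ lies over $\mathfrak{m}\subseteq\T^{S\cup Q}$, since the pushforward maps are Hecke-equivariant (for $\T^{S\cup Q,Q}$ on the $Y_Q$–$Y_0(Q)$ level and for $\T^{S\cup Q}$ on the $Y_0(Q)$–$Y$ level), and since $H_1(Y_Q,\mathcal{O})_{\mathfrak{m}_Q}$ and $H_1(Y_0(Q),\mathcal{O})_{\mathfrak{m}_Q}$ are direct summands of the corresponding $\mathfrak{m}$-localizations (as remarked in the text), the factorization localizes to
\[
H_1(Y_Q,\mathcal{O})_{\mathfrak{m}_Q}\longrightarrow H_1(Y_0(Q),\mathcal{O})_{\mathfrak{m}_Q}\longrightarrow H_1(Y,\mathcal{O})_{\mathfrak{m}}.
\]
The key point is that the first arrow annihilates the augmentation ideal of $\mathcal{O}[\Delta_Q]$: for each $\delta\in\Delta_Q$ the triangle $Y_Q\xrightarrow{\delta}Y_Q\to Y_0(Q)$ commutes, because $\Delta_Q$ is the deck group of $Y_Q\to Y_0(Q)$, so the pushforward to $H_1(Y_0(Q),\mathcal{O})$ is $\Delta_Q$-invariant. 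Hence the first arrow factors through $H_0(\Delta_Q,H_1(Y_Q,\mathcal{O})_{\mathfrak{m}_Q})$, and by Proposition \ref{prop: second iso of homologies} the resulting map to $H_1(Y_0(Q),\mathcal{O})_{\mathfrak{m}_Q}$ is an isomorphism; composing with the isomorphism of Proposition \ref{prop: first iso of homologies} yields an isomorphism $H_0(\Delta_Q,H_1(Y_Q,\mathcal{O})_{\mathfrak{m}_Q})\xrightarrow{\ \sim\ }H_1(Y,\mathcal{O})_{\mathfrak{m}}$, which is exactly the map induced by the natural map named in the corollary.

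I do not expect a genuine obstacle here: all of the substance lives in the two propositions being combined. The only thing requiring care is the bookkeeping just described — confirming that the ``natural maps'' of Propositions \ref{prop: first iso of homologies} and \ref{prop: second iso of homologies} are the two pushforwards along the covering tower, that their composite is the natural map in the statement, that the $\mathfrak{m}$- and $\mathfrak{m}_Q$-localizations are compatible along the tower (which follows from $\mathfrak{m}_Q$ lying over $\mathfrak{m}$ together with the direct-summand statements), and that the first pushforward indeed kills the augmentation ideal. Once these identifications are in place, the corollary is a one-line composition of isomorphisms.
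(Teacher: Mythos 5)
Your proof is correct and takes exactly the route the paper does: the corollary is obtained by composing the isomorphisms of Propositions \ref{prop: first iso of homologies} and \ref{prop: second iso of homologies}, and your additional bookkeeping (factoring the natural map through the covering tower and checking compatibility of the localizations) only makes explicit what the paper leaves implicit.
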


Recall that if we have a global deformation condition $\mathcal{S}$ with universal deformation ring $R_{\mathcal{S}}$, then for a finite set of Taylor--Wiles primes $Q$ we have a global deformation condition $\mathcal{S}_Q$ with universal deformation ring $R_{\mathcal{S}_Q}$ which is an $\mathcal{O}[\Delta_Q]$-algebra such that $R_{\mathcal{S}_Q}/\mathfrak{a}_Q R_{\mathcal{S}_Q} \simeq R_{\mathcal{S}}$.
Note that we also have the Galois representations $\rho_{\mathfrak{m}}:G_{\mathbb{Q},S} \rightarrow GL_2(\T^{S}(\Gamma)_{\mathfrak{m}})$ and $\rho_{\mathfrak{m}_Q}:G_{\mathbb{Q},S} \rightarrow GL_2(\T^{S \cup Q, Q}(\Gamma)_{\mathfrak{m}_Q})$. If 
they are of type $\mathcal{S}$ and $\mathcal{S}_Q$, respectively, then we have the following commutative diagram:

\begin{equation*}
\begin{tikzcd}
R_{\mathcal{S}_Q} \arrow[r , bend left] \arrow[d , twoheadrightarrow]
& H_1(Y_Q,\mathcal{O})_{\mathfrak{m}_Q} \arrow[d , twoheadrightarrow] \\
R_{\mathcal{S}} \arrow[r , bend left]
&  H_1(Y,\mathcal{O})_{\mathfrak{m}}
\end{tikzcd}
\end{equation*}

where both vertical maps are ``$\text{mod} \: \mathfrak{a}_Q$'' maps.

\section{Taylor-Wiles Patching Method and $R=\mathbb{T}$}
\label{sec:3}

\subsection{Minimal Case}

Fix a newform $g \in S_2(\Gamma_1(N),\overline{\mathbb{Q}}_p)$ and let $\eta$ be its nebentypus. Let $\overline{\rho}:=\overline{\rho}_g:G_{\mathbb{Q}} \rightarrow GL_2(\overline{\mathbb{F}}_p)$ be the associated $\text{mod} \: p$ Galois representation to $g$.
There is a finite extension $\mathbb{F}$ of $\mathbb{F}_p$ which contains the image of $\overline{\rho}$.
Assume that $\mathbb{F}$ is sufficiently large such that for all $\sigma \in G_{\mathbb{Q}}$, the eigenvalues of $\overline{\rho}(\sigma)$ are in $\mathbb{F}$.

From now, we assume the following for $\overline{\rho}=\overline{\rho}_g$:
\begin{itemize}
\item[(i)] $p>2$ and $p \nmid N$.
\item[(ii)] $\overline{\rho}|_{G_{\mathbb{Q}(\zeta_p)}}$ is absolutely irreducible with enormous image (if $p \geq 7$, then the condition on the image holds by Theorem \ref{th: classification of enormous subgroups}).
\item[(iii)] \: $N$ is square-free, $\overline{\rho}$ is ramified at all primes dividing $N$ and $\eta$ has prime-to-$p$ order. Equivalently, we assume that $\overline{\rho}$ is modular of weight $2$ and level $N(\overline{\rho})=$ \textit{Artin conductor} such that $N(\overline{\rho})$ is square-free.
\item[(iv)] \: $\overline{\rho}_{G_{\mathbb{Q}_p}} \simeq
\begin{bmatrix}
\overline{\chi}_1 & *\\
0 & \overline{\chi}_2
\end{bmatrix}$
with $\overline{\chi}_1|_{I_p}=1$ and $\overline{\chi}_2|_{I_p}=\overline{\varepsilon}_p^{-1}$, where $\overline{\varepsilon}_p$ is the mod-$p$ cyclotomic character.
\end{itemize}

Now,we define a global deformation condition $\mathcal{S}=(\overline{\rho},S,\chi,\mathcal{O},\{D_v\}_{v \in S})$ by letting:
\begin{itemize}
\item[(1)] $S=\{\ell: \ell|N\} \cup \{p\}$.
\item[(2)] $\chi=\eta \varepsilon_p$.
\item[(3)] $D_v= \begin{cases} D_v^{min} & v|N \\
D_v^{ord} & v=p
\end{cases} $.
\end{itemize}

\begin{remark} \label{rem: minimal case is restrective}
The assumption $D_v= \begin{cases} D_v^{min} & v|N \\
D_v^{ord} & v=p
\end{cases} $ for defining the above global deformation problem $\mathcal{S}$ is \textit{restrictive} for \textit{modularity lifting} purposes, but still has its own interesting consequences, e.g. \textit{modularity lifting in the minimal case} (for the definitions of $D_v^{ord}$ and $D_v^{min}$ look at \ref{subsub: local deformation}).
\end{remark}

Let $\Gamma \supseteq \Gamma_1(N)$ be the kernel of the composition $\Gamma_0(N) \rightarrow (\mathbb{Z}/N\mathbb{Z})^\times \rightarrow \overline{\mathbb{Q}}_p^\times$, where the right map in the composition is $\eta$. Also, assume $\Gamma$ is torsion-free. Let $\mathfrak{m} \subseteq \T^{S}$ be the maximal ideal that corresponds to $\overline{\rho}$. Then, we have the following important theorem:

\begin{theorem} \label{th: R to T is surjective}
The Galois representation $\rho_{\mathfrak{m}}:G_{\mathbb{Q},S} \rightarrow GL_2(\T^S(\Gamma)_{\mathfrak{m}})$ lifting $\overline{\rho}$ is of type $\mathcal{S}$. Hence, there is a map $R_{\mathcal{S}} \rightarrow \T^S(\Gamma)_{\mathfrak{m}}$ in $\textbf{CNL}_{\mathcal{O}}$. Also, this map is surjective.
\end{theorem}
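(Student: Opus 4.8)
The plan is to verify the three assertions in order: first, that $\rho_{\mathfrak m}$ is of type $\mathcal S$; then that the universal property of $R_{\mathcal S}$ produces a $\textbf{CNL}_{\mathcal O}$-map $R_{\mathcal S}\to\T^{S}(\Gamma)_{\mathfrak m}$; and finally that this map is surjective because the traces of Frobenii generate the Hecke algebra. Write $A:=\T^{S}(\Gamma)_{\mathfrak m}$. It lies in $\textbf{CNL}_{\mathcal O}$ (a finite, hence complete, local $\mathcal O$-algebra with residue field $\mathbb F$, after enlarging $\mathbb F$ if necessary) and is $\mathcal O$-flat by Proposition~\ref{prop: result of being non-Eisenstein}. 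That $\rho_{\mathfrak m}$ is unramified outside $S$ is part of its construction. For the determinant, the characteristic polynomial of $\rho_{\mathfrak m}(\mathrm{Frob}_{\ell})$ is $X^{2}-T_{\ell}X+\ell S_{\ell}$ for $\ell\notin S$, so $\det\rho_{\mathfrak m}(\mathrm{Frob}_{\ell})=\ell S_{\ell}$; because $\eta$ has prime-to-$p$ order and $\Gamma$ is the kernel of $\Gamma_{0}(N)\to(\mathbb Z/N\mathbb Z)^{\times}\xrightarrow{\eta}\overline{\mathbb Q}_{p}^{\times}$, each diamond operator $S_{\ell}$ has prime-to-$p$ order and reduces to $\eta(\ell)$ modulo $\mathfrak m$, hence equals $\eta(\ell)$ in $A$ by uniqueness of lifts of roots of unity. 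Thus $\det\rho_{\mathfrak m}$ and $\chi=\eta\varepsilon_{p}$ agree on every $\mathrm{Frob}_{\ell}$ with $\ell\notin S$, and by density of Frobenius classes (Chebotarev) and continuity they coincide.

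The substance is the local conditions, which I would check one eigenform at a time and then descend to $A$. By $\mathcal O$-flatness, $A\hookrightarrow A\otimes_{\mathcal O}\overline{\mathbb Q}_{p}\cong\prod_{f}\overline{\mathbb Q}_{p}$, the product over the eigensystems above $\mathfrak m$; as $\mathfrak m$ is non-Eisenstein these are the systems of newforms $f$ with $\overline{\rho}_{f}\simeq\overline{\rho}$, and on the $f$-component $\rho_{\mathfrak m}$ specializes to $\rho_{f}$. \emph{At $p$:} hypothesis (iv) says that $\overline{\rho}|_{G_{\mathbb Q_{p}}}$ is ($p$-distinguished, since $\overline{\varepsilon}_{p}|_{I_{p}}\neq1$ for $p>2$) ordinary, which forces the residual $p$-th Hecke eigenvalue to be nonzero, so each contributing $f$ (of level prime to $p$) is $p$-ordinary; then $\rho_{f}|_{G_{\mathbb Q_{p}}}$ is reducible with a Galois-stable line on which inertia acts trivially and the complementary character restricting on $I_{p}$ to the prescribed $\delta$ (Wiles, Mazur--Wiles), so it lies in $D_{p}^{ord}$. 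That line is the unique Galois-stable line reducing to the unramified residual eigenline, hence canonical, Hecke-stable, and defined over $A$; therefore $\rho_{\mathfrak m}|_{G_{\mathbb Q_{p}}}\in D_{p}^{ord}(A)$. \emph{At $v\mid N$:} since $\overline{\rho}$ is ramified at $v$, so is each $f$, whence $v\mid N_{f}$; as $N_{f}\mid N$ and $N$ is square-free, $v$ divides $N_{f}$ exactly, so $\pi_{f,v}$ has conductor exponent $1$ and is an unramified twist of Steinberg or an irreducible principal series with exactly one tamely ramified character. By local--global compatibility (Carayol, Langlands--Deligne), $\rho_{f}|_{I_{v}}$ is unipotent in the first case, and in the second $\rho_{f}|_{G_{\mathbb Q_{v}}}=\chi_{1,f}\oplus\chi_{2,f}$ with $\chi_{1,f}|_{I_{v}}=1$ and $\chi_{2,f}|_{I_{v}}=\eta|_{I_{v}}$, which has prime-to-$p$ order and is therefore the Teichm\"uller lift of its reduction; either way $\rho_{f}|_{G_{\mathbb Q_{v}}}$ lies in $D_{v}^{min}$, in the case of that definition dictated by the shape of the (ramified) $\overline{\rho}|_{G_{\mathbb Q_{v}}}$. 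The distinguished line, respectively the eigen-decomposition, being canonical, these structures descend to $A$, so $\rho_{\mathfrak m}|_{G_{\mathbb Q_{v}}}\in D_{v}^{min}(A)$. Together with the determinant and ramification clauses, $\rho_{\mathfrak m}$ is of type $\mathcal S$.

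Since $\overline{\rho}|_{G_{\mathbb Q(\zeta_{p})}}$ is absolutely irreducible, so is $\overline{\rho}$, hence $\operatorname{End}_{\mathbb F[G_{\mathbb Q,S}]}(\overline{\rho})=\mathbb F$ and $D_{\mathcal S}$ is represented by $R_{\mathcal S}$ (Proposition~\ref{prop: D_S is representable}). The strict-equivalence class of $\rho_{\mathfrak m}$ is an element of $D_{\mathcal S}(A)$, so the universal property furnishes a unique $\textbf{CNL}_{\mathcal O}$-morphism $\varphi\colon R_{\mathcal S}\to A$ with $\varphi_{*}\rho_{\mathcal S}^{univ}$ strictly equivalent to $\rho_{\mathfrak m}$. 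For surjectivity: $\varphi(R_{\mathcal S})$ is an $\mathcal O$-subalgebra of $A$, and it is closed, being the continuous image of the compact (profinite) ring $R_{\mathcal S}$. For $\ell\notin S$ it contains $\tr\rho_{\mathfrak m}(\mathrm{Frob}_{\ell})=T_{\ell}$ and $\det\rho_{\mathfrak m}(\mathrm{Frob}_{\ell})=\ell S_{\ell}$, hence both $T_{\ell}$ and $S_{\ell}$ (since $\ell\neq p$ is a unit in $\mathcal O$); as $A=\T^{S}(\Gamma)_{\mathfrak m}$ is generated over $\mathcal O$ by the $T_{\ell},S_{\ell}$ with $\ell\notin S$, we get $\varphi(R_{\mathcal S})=A$.

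The main obstacle is the second paragraph, and within it the primes $v\mid N$: it requires precise local--global compatibility for the Galois representations attached to modular forms, the combinatorial input that square-free level together with residual ramification forces the local type at $v$ to be Steinberg or conductor-$v$ principal series, and the descent of the canonical local filtration or decomposition from the individual eigenforms to $A$. The determinant computation and the surjectivity argument are, by comparison, essentially formal.
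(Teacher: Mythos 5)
The paper states this theorem without proof (it is quoted as a known result in this expository chapter), so there is no argument of the paper's to compare yours against; judged on its own, your proposal is the standard proof and is essentially correct. The structure --- reduce to individual eigenforms via the $\mathcal{O}$-flat embedding $\T^S(\Gamma)_{\mathfrak m}\hookrightarrow\prod_f\overline{\mathbb{Q}}_p$, invoke local--global compatibility at $v\mid N$ and ordinarity at $p$ for each $f$, descend the local structure to the Hecke algebra, and then get surjectivity from $\tr\rho_{\mathfrak m}(\Frob{\ell})=T_\ell$ and $\det\rho_{\mathfrak m}(\Frob{\ell})=\ell S_\ell$ generating $\T^S(\Gamma)_{\mathfrak m}$ topologically over $\mathcal{O}$ --- is exactly how this is done in \cite{DDT} and \cite{Wiles}. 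You correctly identify the genuinely nontrivial point: the descent of the ordinary filtration at $p$ from the generic fibre to $\T^S(\Gamma)_{\mathfrak m}$ is not automatic from "canonicity" alone; it is where $p$-distinguishedness ($\overline{\chi}_1|_{I_p}=1\neq\overline{\varepsilon}_p^{-1}=\overline{\chi}_2|_{I_p}$ for $p>2$) is used, via a Hensel/idempotent argument on the Frobenius eigenvalues, and similarly the fact that every eigensystem above the non-Eisenstein $\mathfrak m$ is new of level exactly $N=N(\overline{\rho})$ uses the running hypothesis that $\overline{\rho}$ is ramified at every $\ell\mid N$ with $N$ square-free, which you do invoke. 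These are sketched rather than proved, but that is appropriate to the level of detail of the surrounding text, and no step of your outline is wrong.
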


\begin{remark}
The goal is to show that the above surjection is indeed an isomorphism.
\end{remark}

\subsubsection{Patching}

Note that we will use \textit{Diamond's} modification of the original patching argument \cite{Diamond}.

We continue to assume the assumptions that we made in this section. Let $(Q, \{\alpha_v\}_{v \in Q})$ be a Taylor--Wiles datum. Let $\T^{S \cup Q}(\Gamma_Q)_{\mathfrak{m}_Q}$ be the subalgebra of $\End_{\mathcal{O}}(H_1(Y,\mathcal{O})_{\mathfrak{m}_Q})$ that is generated by $T_\ell$ and $S_\ell$ for all primes $\ell \notin S \cup Q$, and by $\langle \delta \rangle$ for all $\delta \in \Delta_Q$. Then, we have the following theorem:

\begin{theorem} \label{th: existence of rho_Q}
There exists a continuous Galois representation $\rho_Q:G_{\mathbb{Q},S \cup Q} \rightarrow GL_2(\T^{S \cup
Q}(\Gamma_Q)_{\mathfrak{m}_Q})$ such that:
\begin{itemize}
\item[(i)] For any $\ell \notin S \cup Q$, the characteristic polynomial of $\rho_Q(\Frob{\ell})$ is given by $X^2-T_\ell X +\ell S_\ell$.
\item[(ii)] For any $v \in S$, $\rho_Q|_{G_{\mathbb{Q}_v}} \in D_v$.
\item[(iii)] \: For any $v \in Q$, $\rho_Q|_{I_v} \simeq 1 \oplus \chi_v$, where $\chi_v \circ Art_{\mathbb{Q}_v}(\delta)=\langle \delta \rangle$.
\end{itemize}
\end{theorem}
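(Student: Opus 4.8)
The plan is to construct $\rho_Q$ by the same mechanism that produced the integral representation $\rho_{\mathfrak m}$ of \eqref{eq: integral Gal. rep.}, now at level $\Gamma_Q$, and then to verify properties (i)--(iii) by reducing the local conditions to classical eigenforms and invoking local--global compatibility.

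First I would note that $\mathfrak m_Q$ is non-Eisenstein: it lies over $\mathfrak m$, whose associated residual representation is a conjugate of $\overline\rho=\overline\rho_g$, which is absolutely irreducible by hypothesis. Consequently $H_1(Y_Q,\mathcal O)_{\mathfrak m_Q}$ is $\mathcal O$-torsion free and $\T^{S\cup Q}(\Gamma_Q)_{\mathfrak m_Q}$ is reduced and $\mathcal O$-flat, hence embeds into $\T^{S\cup Q}(\Gamma_Q)_{\mathfrak m_Q}\tensor{\mathcal O}\overline{\mathbb Q}_p\simeq\prod_f\overline{\mathbb Q}_p$, the product taken over the Hecke eigenforms $f$ of level $\Gamma_Q$ lying over $\mathfrak m_Q$. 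Each such $f$ carries its classical $p$-adic Galois representation $\rho_f: G_{\mathbb Q,S\cup Q}\to GL_2(\overline{\mathbb Q}_p)$ with $\rho_f(\Frob{\ell})$ of characteristic polynomial $X^2-\lambda_f(T_\ell)X+\ell\,\lambda_f(S_\ell)$ for $\ell\notin S\cup Q$. The trace of $\prod_f\rho_f$ takes values in $\T^{S\cup Q}(\Gamma_Q)_{\mathfrak m_Q}$ (the diamond operators $\langle\delta\rangle$, $\delta\in\Delta_Q$, being recovered from the $S_\ell$ via Chebotarev), so by the usual pseudo-representation descent -- which applies because $\overline\rho$ is absolutely irreducible and $p>2$ -- one obtains a continuous $\rho_Q: G_{\mathbb Q,S\cup Q}\to GL_2(\T^{S\cup Q}(\Gamma_Q)_{\mathfrak m_Q})$ with the characteristic polynomials asserted in (i).

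For (ii) and (iii) I would argue pointwise. Each $D_v$ is representable by a quotient of $R^{\square}_{\overline\rho|_{G_{\mathbb Q_v}}}$, hence is a closed condition, and the condition in (iii) can likewise be encoded by an ideal of $\T^{S\cup Q}(\Gamma_Q)_{\mathfrak m_Q}$; so by the embedding into $\prod_f\overline{\mathbb Q}_p$ it suffices to check (ii) and (iii) for each $\rho_f$. At $v\mid N$: since $N$ is square-free and $\overline\rho$ is ramified at $v$, the local component $\pi_{f,v}$ is ramified of conductor exactly $v$, so local--global compatibility (Carayol) puts $\rho_f|_{G_{\mathbb Q_v}}$ in $D_v^{min}$ -- in case (i) if $\overline\rho(I_v)$ is unipotent and in case (ii) otherwise, the Steinberg subcase being of minimal type. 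At $v=p$: $f$ has weight $2$ and level prime to $p$, and by hypothesis (iv) $\overline\rho|_{G_{\mathbb Q_p}}$ is ordinary and $p$-distinguished, so by a theorem of Wiles on ordinary modular Galois representations $\rho_f|_{G_{\mathbb Q_p}}$ is ordinary of the prescribed upper-triangular shape, the inertial character on the lower line being fixed by the Hodge--Tate weights and the nebentypus of $f$ at $p$; hence $\rho_f|_{G_{\mathbb Q_p}}\in D_p^{ord}$. At $v\in Q$: by Proposition \ref{prop: diagonal lift at TW primes}, $\rho_f|_{G_{\mathbb Q_v}}$ is conjugate to a sum $\chi_{v,1}\oplus\chi_{v,2}$ of characters; the congruence $q_v\equiv1\pmod p$ rules out the Steinberg case -- it would force the two eigenvalues of $\overline\rho(\Frob{v})$ to be in ratio $q_v\equiv1$, contradicting their distinctness -- and the level at $v$, of conductor dividing $v$, rules out supercuspidals, so $\rho_f|_{G_{\mathbb Q_v}}$ carries a one-dimensional unramified summand, singled out by the $U_v$-eigenvalue of $f$ (a unit congruent to $\alpha_v$), while the complementary summand restricted to $I_v$ equals, via $Art_{\mathbb Q_v}$, the nebentypus of $f$ at $v$, i.e. the $\Delta_v$-diamond character $\delta\mapsto\langle\delta\rangle$. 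Reassembling over the $f$ -- equivalently, tracking the $\Delta_Q$-action on $H_1(Y_Q,\mathcal O)_{\mathfrak m_Q}$ through the diagonalization of Proposition \ref{prop: diagonal lift at TW primes} -- gives $\rho_Q|_{I_v}\simeq1\oplus\chi_v$ as in (iii).

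The construction of $\rho_Q$ is essentially routine given the earlier sections; the real work is in the local analysis. I expect the principal obstacle to be establishing local--global compatibility in precisely the form needed: the ordinarity of $\rho_f$ at $p$ (resting on Wiles' ordinarity theorem and on the $p$-distinguished hypothesis built into (iv)), and, at the Taylor--Wiles primes, the identification of the ramified summand of $\rho_Q|_{G_{\mathbb Q_v}}$ with the diamond action -- which secretly relies on the operator $U_v$ interpolating the Frobenius eigenvalue of the unramified character, so that all ramification is pushed onto the complementary line. A secondary technical point is justifying that these pointwise statements over the $\overline{\mathbb Q}_p$-eigenforms propagate to the integral statement over $\T^{S\cup Q}(\Gamma_Q)_{\mathfrak m_Q}$, which uses the reducedness and $\mathcal O$-flatness of the localized Hecke algebra together with the closedness of all the conditions involved.
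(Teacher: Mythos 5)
The paper states Theorem \ref{th: existence of rho_Q} without proof, so there is no argument to compare against step by step; but your construction --- embedding $\T^{S\cup Q}(\Gamma_Q)_{\mathfrak{m}_Q}$ into $\prod_f\overline{\mathbb{Q}}_p$ via $\mathcal{O}$-flatness at the non-Eisenstein ideal, gluing the classical $\rho_f$, and descending by Carayol using absolute irreducibility of $\overline{\rho}$ --- is exactly the mechanism the paper itself uses to produce $\rho_{\mathfrak{m}}$ in \eqref{eq: integral Gal. rep.}, and your verification of (ii)--(iii) eigenform by eigenform (local--global compatibility and minimality at $v\mid N$, ordinarity at $p$ forced by hypothesis (iv), and at $v\in Q$ the exclusion of Steinberg components via $q_v\equiv 1\pmod p$ against the distinct-eigenvalue condition and of supercuspidals via the conductor bound, with Proposition \ref{prop: diagonal lift at TW primes} promoting the splitting to the integral level) is the standard and correct argument. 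This is essentially the same approach the paper would take, executed at an appropriate level of detail.
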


So, by applying Theorem \ref{th: R to T is surjective} to the previous Theorem, we find that there exists a surjection $R_{\mathcal{S}_Q} \twoheadrightarrow \T^{S \cup Q}(\Gamma_Q)_{\mathfrak{m}}$. Also, $H_1(Y_Q,\mathcal{O})_{\mathfrak{m}_Q}$ has an $R_{\mathcal{S}_Q}$-module structure which is compatible with its $\mathcal{O}[\Delta_Q]$-module structure.

\begin{proposition} \label{prop: there exist R_infty and M_infty}
There is a non-negative integer $q$, the $\textbf{CNL}_{\mathcal{O}}$-algebra $R_\infty:=\mathcal{O}\llbracket X_1, \cdots , X_q\rrbracket $ and a finitely generated $R_\infty$-module $M_\infty$ such that the following diagram is commutative and satisfying the following properties ($S_\infty:=\mathcal{O}\llbracket Y_1, \cdots , Y_q\rrbracket$):
\begin{equation} \label{eq: minimal case}
\begin{tikzcd}
S_\infty \arrow[r, ]
& R_\infty \arrow[r, bend left] \arrow[d, twoheadrightarrow]
& M_\infty \arrow[d,twoheadrightarrow ] \\
&
R:=R_{\mathcal{S}} \arrow[r, bend left]
& M:=H_1(Y,\mathcal{O})_{\mathfrak{m}}
\end{tikzcd}
\end{equation}

\begin{itemize}
\item[(1)] The $R_\infty$-module $M_\infty$ is a finite free $S_\infty$-module.
\item[(2)] We have the surjections $R_\infty \twoheadrightarrow R$ and $M_\infty \twoheadrightarrow M$ such that kernel of the first map is contained in $\mathfrak{a}R_\infty$ and kernel of the second map is equal to $\mathfrak{a}M_\infty$, where $\mathfrak{a}:=\langle Y_1, \cdots , Y_q \rangle \subseteq S_\infty$ is the augmentation ideal.
\end{itemize}
\end{proposition}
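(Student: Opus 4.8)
The plan is to construct $R_\infty$ and $M_\infty$ by the Taylor--Wiles--Kisin patching argument, taking an inverse limit over a carefully chosen subsequence of the auxiliary sets $Q_N$ of Taylor--Wiles primes of level $N$ supplied by Proposition~\ref{prop: existence of TW primes Q_N}. Fix the integer $q = h^1_{\mathcal{S}^\perp}(ad^0(\overline{\rho}))$ of that proposition, so that for each $N$ we have $|Q_N| = q$ and, by Corollary~\ref{cor: corollary of existence of TW primes Q_N} (minimal case, $T = \emptyset$, $R_{\mathcal{S}}^{T\text{-loc}} = \mathcal{O}$), a surjection $\mathcal{O}\llbracket X_1,\dots,X_q\rrbracket \twoheadrightarrow R_{\mathcal{S}_{Q_N}}$. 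On the automorphic side, $M_N := H_1(Y_{Q_N},\mathcal{O})_{\mathfrak{m}_{Q_N}}$ is a finite free $\mathcal{O}[\Delta_{Q_N}]$-module by Proposition~\ref{prop: second iso of homologies}, carries a compatible $R_{\mathcal{S}_{Q_N}}$-action via the surjection $R_{\mathcal{S}_{Q_N}} \twoheadrightarrow \T^{S\cup Q_N}(\Gamma_{Q_N})_{\mathfrak{m}_{Q_N}}$, and satisfies $M_N/\mathfrak{a}_{Q_N} M_N \simeq M := H_1(Y,\mathcal{O})_{\mathfrak{m}}$ by Corollary~\ref{cor: third iso of homologies}. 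Choosing once and for all a surjection $\mathbb{Z}_p^q \twoheadrightarrow \Delta_{Q_N}$ for each $N$ (possible since each $\Delta_v$ is cyclic of $p$-power order and $|Q_N| = q$), we regard $M_N$ as a module over $S_\infty := \mathcal{O}\llbracket Y_1,\dots,Y_q\rrbracket \simeq \mathcal{O}\llbracket \mathbb{Z}_p^q\rrbracket$, and by freeness over $\mathcal{O}[\Delta_{Q_N}]$ it is free over $\mathcal{O}[\Delta_{Q_N}] = S_\infty/(\text{open ideal})$ of a rank independent of $N$ (namely $\mathrm{rank}_{\mathcal{O}} M$).

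The core step is the patching (diagonalization) argument. For each $N$, let $\mathfrak{d}_N \subseteq S_\infty$ be the open ideal with $S_\infty/\mathfrak{d}_N \simeq \mathcal{O}[\Delta_{Q_N}]$, and consider the ``level-$N$ patching datum'' consisting of: the finite ring $R_{\mathcal{S}_{Q_N}}/\mathfrak{m}_{R_\infty}^N$ together with its surjection from $\mathcal{O}\llbracket X_1,\dots,X_q\rrbracket/\mathfrak{m}^N$ and its map to $R := R_{\mathcal{S}}$, and the finite module $M_N/\mathfrak{m}_{R_\infty}^N M_N$ with its $S_\infty/(\mathfrak{d}_N,\mathfrak{m}^N)$-module structure and its identification $M_N/(\mathfrak{a}_{Q_N}, \mathfrak{m}^N) \simeq M/\mathfrak{m}^N M$. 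There are only finitely many isomorphism classes of such data for each fixed $N$ (all the relevant objects are finite sets of bounded size), so a pigeonhole/König's-lemma argument produces an infinite subsequence $(N_i)$ along which the level-$N$ truncations are compatible as $N$ grows. Passing to the inverse limit over this subsequence yields $R_\infty$, which is a quotient of $\mathcal{O}\llbracket X_1,\dots,X_q\rrbracket$; since $\dim R_{\mathcal{S}_{Q_N}} \geq 1 + q$ uniformly (from $\dim R_{\mathcal{S}_{Q_N}}^{T\text{-loc}} + g = q + 0$ and the surjection, together with the ring-theoretic lower bound) and $R_\infty$ is a quotient of a power-series ring in $q$ variables over $\mathcal{O}$, one gets $R_\infty = \mathcal{O}\llbracket X_1,\dots,X_q\rrbracket$; and it yields $M_\infty := \varprojlim_i M_{N_i}/\mathfrak{m}_{R_\infty}^{N_i}$, a finitely generated $R_\infty$-module equipped with a compatible $S_\infty$-action and surjections $R_\infty \twoheadrightarrow R$, $M_\infty \twoheadrightarrow M$ whose kernels are (contained in, resp.\ equal to) the augmentation ideal $\mathfrak{a} = \langle Y_1,\dots,Y_q\rangle$, by construction.

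It remains to check the two asserted properties. Property~(2) is built into the patching construction: the kernel of $R_\infty \twoheadrightarrow R$ is the closure of the image of $\mathfrak{a}$ because each $R_{\mathcal{S}_{Q_N}}/\mathfrak{a}_{Q_N} \simeq R_{\mathcal{S}}$, and the kernel of $M_\infty \twoheadrightarrow M$ is exactly $\mathfrak{a} M_\infty$ because each $M_N/\mathfrak{a}_{Q_N} M_N \simeq M$ via Corollary~\ref{cor: third iso of homologies}. Property~(1), that $M_\infty$ is finite free over $S_\infty$, is the key payoff: each $M_N$ is finite free over $\mathcal{O}[\Delta_{Q_N}] = S_\infty/\mathfrak{d}_N$ of rank $r := \mathrm{rank}_{\mathcal{O}} M$, these ranks are uniform in $N$, and the patched module $M_\infty$ is therefore a module over $S_\infty$ that is, modulo every ideal in the cofinal system $\{\mathfrak{d}_{N_i}\}$, finite free of rank $r$; since $\bigcap_i \mathfrak{d}_{N_i} = 0$ (the $N_i$ are unbounded and $q_v \equiv 1 \bmod p^{N}$ forces the $\Delta_v$ to have $p$-valuation $\to \infty$), a standard commutative-algebra argument (e.g.\ comparing $M_\infty$ with $S_\infty^r$ via a lift of a basis and applying Nakayama, using that $S_\infty$ is a complete regular local ring and $M_\infty$ is $\mathfrak{m}_{S_\infty}$-adically separated) shows $M_\infty \simeq S_\infty^r$. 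The main obstacle is the bookkeeping in the patching step --- one must set up the patching data so that all structure maps (the $R_\infty$-action, the $S_\infty$-action, the surjections to $R$ and $M$) are simultaneously respected under the diagonalization, and must arrange that the two module structures on $M_\infty$ are compatible in the sense that $S_\infty \to R_\infty$ is a $\textbf{CNL}_{\mathcal{O}}$-morphism making $M_\infty$ an $R_\infty$-module via it; everything else is a routine limit argument once the finiteness bounds (uniform $\mathcal{O}$-rank of $M_N$, uniform number of generators and relations of $R_{\mathcal{S}_{Q_N}}$) are in place.
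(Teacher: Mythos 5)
Your overall strategy is exactly the paper's: use Proposition~\ref{prop: existence of TW primes Q_N} and Corollary~\ref{cor: corollary of existence of TW primes Q_N} to produce, for each level $N$, a set $Q_N$ with $|Q_N|=q$ and a surjection $\mathcal{O}\llbracket X_1,\dots,X_q\rrbracket\twoheadrightarrow R_{\mathcal{S}_{Q_N}}$; take $M_N=H_1(Y_{Q_N},\mathcal{O})_{\mathfrak{m}_{Q_N}}$, free over $\mathcal{O}[\Delta_{Q_N}]$ with coinvariants $M$ by Proposition~\ref{prop: second iso of homologies} and Corollary~\ref{cor: third iso of homologies}; truncate to get finite ``patching data'' at each level; invoke finiteness of the set of isomorphism classes at each level plus a pigeonhole/K\"onig argument to extract a compatible subsequence; and pass to the inverse limit. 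This is precisely Diamond's patching as carried out in the paper (Definition~\ref{def: patching datum} and the discussion following Theorem~\ref{R=T minimal case}), so in substance the proof is the intended one.

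There is, however, one step in your write-up that is wrong as stated: the claim that ``$\dim R_{\mathcal{S}_{Q_N}}\geq 1+q$ uniformly,'' used to conclude that the patched ring equals all of $\mathcal{O}\llbracket X_1,\dots,X_q\rrbracket$. No such lower bound is available a priori --- $R_{\mathcal{S}_{Q_N}}$ surjects onto a Hecke algebra that is finite over $\mathcal{O}$, its expected dimension is $1$, and the entire point of patching is that one has no control over $\dim R_{\mathcal{S}_{Q_N}}$ before the limit is taken (the inequality $\dim R_\infty\geq 1+q$ is a \emph{consequence} of the freeness of $M_\infty$ over $S_\infty$, extracted only in the proof of Theorem~\ref{R=T minimal case}). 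The fix is the one the paper uses and costs nothing: do not patch the rings $R_{\mathcal{S}_{Q_N}}$ themselves, but fix $R_\infty:=\mathcal{O}\llbracket X_1,\dots,X_q\rrbracket$ from the outset and patch the surjections $f_N\colon R_\infty\twoheadrightarrow R_{\mathcal{S}_{Q_N}}\to R/\mathfrak{d}_N$ together with the $R_\infty\otimes_{\mathcal{O}}S_N$-modules $X_N$; then the statement ``$R_\infty$ is the power series ring'' is true by fiat and nothing needs to be proved about $\dim R_{\mathcal{S}_{Q_N}}$. (Also a small slip: $q$ should be $h^1_{\mathcal{S}^\perp}(ad^0(\overline{\rho})(1))$, with the Tate twist.) With that adjustment the rest of your argument --- the uniform rank $r=\operatorname{rank}_{\mathcal{O}}M$ of $M_N$ over $\mathcal{O}[\Delta_{Q_N}]$, the identification $\bigcap_i\mathfrak{d}_{N_i}=0$, and the limit argument giving $M_\infty\simeq S_\infty^r$ and the two surjections with the stated kernels --- goes through as in the paper.
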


By using this Proposition, one can prove the following $R=\T$ statement:

\begin{theorem} \label{R=T minimal case}
The surjection $R_{\mathcal{S}} \twoheadrightarrow \T^S(\Gamma)_{\mathfrak{m}}$ (look at Theorem \ref{th: R to T is surjective}) is, in fact, an isomorphism of local complete intersection rings.
\end{theorem}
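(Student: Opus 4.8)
The plan is to read the theorem off from the patching data of Proposition~\ref{prop: there exist R_infty and M_infty} by a depth argument; the essential numerical coincidence — that $R_\infty$ and $S_\infty$ are power series rings over $\mathcal{O}$ in the \emph{same} number $q$ of variables — is already built into that proposition, so what remains is commutative algebra. First I would show that $M_\infty$ is free over $R_\infty$. Both $R_\infty=\mathcal{O}\llbracket X_1,\dots,X_q\rrbracket$ and $S_\infty=\mathcal{O}\llbracket Y_1,\dots,Y_q\rrbracket$ are regular local rings of dimension $q+1$ (here one uses that $\mathcal{O}$ is a discrete valuation ring), and $M_\infty\neq 0$ since it surjects onto $M=H_1(Y,\mathcal{O})_{\mathfrak{m}}\neq 0$ ($\mathfrak{m}$ being in the support of $H_1(Y,\mathcal{O})$). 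As $M_\infty$ is finite free over $S_\infty$, the regular system of parameters $\varpi,Y_1,\dots,Y_q$ of $S_\infty$ (with $\varpi$ a uniformizer of $\mathcal{O}$) is an $M_\infty$-regular sequence; since the $S_\infty$-action on $M_\infty$ factors through the local homomorphism $S_\infty\to R_\infty$, the images of these $q+1$ elements form an $M_\infty$-regular sequence in $\mathfrak{m}_{R_\infty}$, so $\depth_{R_\infty}M_\infty\ge q+1$. Combined with $\depth_{R_\infty}M_\infty\le\dim R_\infty=q+1$ this gives equality, and the Auslander--Buchsbaum formula over the regular local ring $R_\infty$ yields $\pd_{R_\infty}M_\infty=\depth R_\infty-\depth_{R_\infty}M_\infty=0$. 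Hence $M_\infty$ is a free $R_\infty$-module of positive rank.

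Next I would reduce modulo the augmentation ideal $\mathfrak{a}=\langle Y_1,\dots,Y_q\rangle$ of $S_\infty$. By Proposition~\ref{prop: there exist R_infty and M_infty} the kernel of $R_\infty\to R=R_{\mathcal{S}}$ is contained in $\mathfrak{a}R_\infty$ while the kernel of $M_\infty\to M$ equals $\mathfrak{a}M_\infty$, so $M=M_\infty/\mathfrak{a}M_\infty$ is free of positive rank over $R_\infty/\mathfrak{a}R_\infty$; in particular $R_\infty/\mathfrak{a}R_\infty$ acts faithfully on $M$. But that action factors through
\[
R_\infty/\mathfrak{a}R_\infty\longrightarrow R_{\mathcal{S}}\longrightarrow\T^S(\Gamma)_{\mathfrak{m}}\hookrightarrow\End_{\mathcal{O}}(M),
\]
where the first two maps are surjective, the second being the map of Theorem~\ref{th: R to T is surjective}, and the last arrow is injective since $\T^S(\Gamma)_{\mathfrak{m}}$ is by construction a subalgebra of $\End_{\mathcal{O}}(M)$. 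Faithfulness of the composite forces each of these maps to be an isomorphism; in particular the surjection $R_{\mathcal{S}}\to\T^S(\Gamma)_{\mathfrak{m}}$ is an isomorphism.

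Finally, for the complete intersection statement: the isomorphisms just obtained show that $\ker(R_\infty\to R_{\mathcal{S}})=\mathfrak{a}R_\infty=(\bar y_1,\dots,\bar y_q)$, where $\bar y_i$ is the image of $Y_i$ in $R_\infty$; thus $R_{\mathcal{S}}$ is a quotient of the regular local ring $R_\infty$ of dimension $q+1$ by $q$ elements. Since $R_{\mathcal{S}}\simeq\T^S(\Gamma)_{\mathfrak{m}}$ is finite and flat over $\mathcal{O}$ (Proposition~\ref{prop: result of being non-Eisenstein}), it has Krull dimension $1=(q+1)-q$, so $\dim R_\infty/(\bar y_1,\dots,\bar y_q)=\dim R_\infty-q$; as $R_\infty$ is Cohen--Macaulay, $\bar y_1,\dots,\bar y_q$ is then a regular sequence, and $R_{\mathcal{S}}=R_\infty/(\bar y_1,\dots,\bar y_q)$ is a local complete intersection. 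The one place calling for care is the depth computation in the first step — one must check that a regular sequence coming from $S_\infty$ stays $R_\infty$-regular on $M_\infty$ — but once $M_\infty$ is known to be $R_\infty$-free everything else is formal; the genuine content of the theorem is hidden in the patching construction of $R_\infty$ and $M_\infty$ in Proposition~\ref{prop: there exist R_infty and M_infty}, which we take as given.
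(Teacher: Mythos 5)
Your proposal is correct and follows essentially the same route as the paper: freeness of $M_\infty$ over $R_\infty$ via the depth comparison $\depth_{R_\infty}M_\infty\ge\depth_{S_\infty}M_\infty=\dim S_\infty=\dim R_\infty$ (you just make the underlying regular-sequence argument explicit) together with Auslander--Buchsbaum, then reduction modulo $\mathfrak{a}$ and faithfulness of the $R$-action on the free module $M$ to kill the kernel of $R_{\mathcal{S}}\to\T^S(\Gamma)_{\mathfrak{m}}$, and finally the presentation $R_\infty/(\bar y_1,\dots,\bar y_q)$ for the complete intersection claim. The only differences are expository — you spell out the positive-rank and regular-sequence details that the paper leaves implicit.
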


\begin{proof}

By Proposition \ref{prop: there exist R_infty and M_infty}, we have that $M_\infty$ is a finite free $S_\infty$-module and its $S_\infty$-module structure factors through $R_\infty$. Thus we have:
\begin{equation*}
1+q \geq \dim R_\infty \geq \dim _{R_\infty}M_\infty \geq \depth _{R_\infty}M_\infty \geq \depth _{S_\infty}M_\infty = \dim S_\infty =1+q,
\end{equation*}
so all above inequalities should be equalities (note that the equality $\depth _{S_\infty}M_\infty = \dim S_\infty$ follows from the fact that $M_\infty$ is a finite free $S_\infty$-module).
Since $R_\infty$ is regular, then by \textit{Serre's theorem}, $M_\infty$ has a projective resolution of finite length. Thus, we can use the \textit{Auslander--Buchsbaum formula}:
\begin{equation*}
\pd _{R_\infty} M_\infty = \depth R_\infty - \depth M_\infty = (1+q) - (1+q) =0,
\end{equation*}
where $\pd _{R_\infty} M_\infty$ is the projective dimension of the $R_\infty$-module $M_\infty$. Therefore, $M_\infty$ is a projective $R_\infty$ module, hence it is free because $R_\infty$ is local. 
%Note that this implies the map $S_\infty \rightarrow R_\infty$ in Proposition \ref{prop: there exist R_infty and M_infty} is an isomorphism, since they have the same dimension and $M_\infty$ is finite free over both of them and $S_\infty$-module structure of $M_\infty$ factors through $R_\infty$ (the proof of injectivity is the same as in below argument in showing $R \simeq \T^S(\Gamma)_{\mathfrak{m}}$; the proof of surjectivity follows from injectivity and that $R_\infty$ is a free $S_\infty$-module of rank $1$ and the latter itself follows from equalitity of their dimensions and freeness of $M_\infty$ over both of them). 
Once again, by Proposition \ref{prop: there exist R_infty and M_infty} 
%and by $S_\infty \simeq R_\infty$
, we find that $M \simeq M_\infty/\mathfrak{a}M_\infty$ is a free module over $R \simeq R_\infty/\mathfrak{a}R_\infty$. 
But, the $R$-module structure on $M$ is defined via the surjection $R=R_\mathcal{S} \rightarrow \T^S(\Gamma)_{\mathfrak{m}}$ (look at Theorem \ref{th: R to T is surjective}). 
If $0 \neq r \in R$ be in the kernel of this surjection map, then $r \in \Ann _R (M)$ which is impossible since $M$ is free over $R$. Thus we get $R=R_\mathcal{S} \simeq \T^S(\Gamma)_{\mathfrak{m}}$. 
Moreover, these rings are complete intersection rings because we have a presentation:
\begin{equation*}
R=R_{\mathcal{S}} \simeq R_\infty /\mathfrak{a} = \mathcal{O}\llbracket X_1,\cdots,X_q\rrbracket /\langle Y_1, \cdots Y_q\rangle
\end{equation*}
and $\dim R = \dim \T^S(\Gamma)_{\mathfrak{m}} = 1$.
\end{proof}

Now, let us see how one constructs $M_\infty$ and the surjections $R_\infty \twoheadrightarrow R$ and $M_\infty \twoheadrightarrow M$ (as inverse limits of modules and maps).

\begin{definition} \label{def: patching datum}
Put $q=h^1_{\mathcal{S}^\perp}(ad^0(\overline{\rho})(1))$, $S_\infty=\mathcal{O}\llbracket \mathbb{Z}_p^q\rrbracket =\mathcal{O}\llbracket Y_1, \cdots, Y_q\rrbracket $. For any positive integer $N$, let we put:
\begin{itemize}
\item[(i)] $\mathfrak{a}_N:=\Ker(S_\infty \twoheadrightarrow \mathcal{O}[(\mathbb{Z}/p^N\mathbb{Z})^q])$.
\item[(ii)] $S_N:=S_\infty/\langle \varpi^N, \mathfrak{a}_N \rangle$ (recall that $\varpi$ is a fixed uniformizer of $\mathcal{O}$).
\item[(iii)] \: $\mathfrak{d}_N:=\langle \varpi^N, \Ann_R(M)^N \rangle$.
\end{itemize}
We define a \textit{patching datum of level $N$} to be a triple $(f,X,g)$, where:
\begin{itemize}
\item[(1)] $f:R_\infty \rightarrow R/\mathfrak{d}_N$ is a surjection in $\textbf{CNL}_{\mathcal{O}}$.
\item[(2)] $X$ is an $R_\infty \tensor{\mathcal{O}} S_N$-module which is finite and free over $S_N$, such that:
\begin{itemize}
\item[(i)] $\im(S_N \rightarrow \End_{\mathcal{O}}(X)) \subseteq \im(R_\infty \rightarrow \End_{\mathcal{O}}(X))$.
\item[(ii)] $\im(\mathfrak{a} \rightarrow \End_{\mathcal{O}}(X)) \subseteq \im(\Ker(f) \rightarrow \End_{\mathcal{O}}(X))$.
\end{itemize}
\item[(3)] $g:X/\mathfrak{a} \rightarrow M/\langle \varpi^N \rangle$ is an isomorphism of $R_\infty$-modules.
\end{itemize}
We say that two patching data $(f,X,g)$ and $(f^\prime,X^\prime,g^\prime)$ of level $N$ are isomorphic, if $f=f^\prime$ and there exists an isomorphism $X \simeq X^\prime$ of $R_\infty \tensor{\mathcal{O}} S_N$-modules which is compatible with $g$ and $g^\prime$.
\end{definition}

\begin{remark}
An important fact is that there are only finitely many isomorphic classes of patching data of a fixed level $N$.
\end{remark}

Note that if $M \geq N$ be two positive integers and if $D=(f,X,g)$ is a patching datum of level $M$, then $D \: \text{mod} \: N:=(f \: \text{mod} \: \mathfrak{d}_N, X \tensor{S_M} S_N, g \tensor{S_M} S_N)$ is a patching datum of level $N$.

Recall that by Propositon \ref{prop: existence of TW primes Q_N}, for each positive integer $N$, we can choose a Taylor--Wiles datum $(Q_N,\{\alpha_v\}_{v \in Q_N})$ of level $N$ such that for all $N$ we have :
\begin{itemize}
\item[(i)] $|Q_N|=q$.
\item[(ii)] $h^1_{\mathcal{S}^\perp_{Q_N}}(ad^0(\overline{\rho})(1))=0$.
\end{itemize}

By what we have seen until now, for any positive integer $N$ we can define a patching datum of level $N$ by $D_N:=(f_N,X_N,g_N)$, with:
\begin{itemize}
\item[(1)] $f_N:R_\infty \rightarrow R_{\mathcal{S}_{Q_N}} \rightarrow R \rightarrow R/\mathfrak{d}_N$, where the map $R_\infty= \mathcal{O}\llbracket X_1, \cdots , X_q\rrbracket \twoheadrightarrow R_{\mathcal{S}_{Q_N}}$ comes from the fact that the $\mathcal{O}$-relative tangent space of $R_{\mathcal{S}_{Q_N}}$ has dimension $q:=h^1_{\mathcal{S}_{Q_N}}(ad_0(\overline{\rho}))$.
\item[(2)] $X_N:=H_1(Y_{Q_N},\mathcal{O})_{\mathfrak{m}_{Q_N}} \tensor{S_\infty} S_N$.
\item[(3)] $g$ is induced from the isomorphism between $H_1(Y,\mathcal{O})_{\mathfrak{m}}$ and the $\Delta_{Q_N}$-coinvariants of $H_1(Y_{Q_N},\mathcal{O})_{\mathfrak{m}_{Q_N}}$ (look at Corollary \ref{cor: third iso of homologies}).
\end{itemize}

Then, for positive integers $M \geq N$ and a patching datum of level $M$, we have a patching datum of level $N$ by defining it as $D_{M,N}:=D_M \: \text{mod} \: N =(f_{M,N},X_{M,N},g_{M,N})$. Now, since for any positive integer $N$, there are infinitely many $M \geq N$ and only finitely many isomorphism classes of patching data of level $N$, we can find a subsequence $(M_i,N_i)_{i \geq 1}$ with $M_i \geq N_i$ and $N_{i+1}>N_i$ such that $D_{M_{i+1},N_{i+1}} \: \text{mod} \: N_i \simeq D_{M_i,N_i}$. Then:
\begin{itemize}
\item[(i)] The $R_\infty$-module $M_\infty$ is defined as $\varprojlim X_{M_i}$.
\item[(ii)] The map $R_\infty \twoheadrightarrow R$ is defined as $\varprojlim f_{M_i,N_i}$.
\item[(iii)] \: The map $M_\infty \twoheadrightarrow M$ is defined as $\varprojlim g_{M_i,N_i}$.
\end{itemize} 

\begin{remark}
Let us mention a motivation behind the patching method. In some sense, \textit{modularity} is a $GL_2$ version of the \textit{Iwasawa main conjecture}, which considered as a $GL_1$ problem (nowadays we have a $GL_2$ version of Iwasawa main conjecture itself). 
In fact, in Iwasawa theory we have a good module to work with, namely the inverse limit of the $p$-parts of the class groups of the number fields in the tower of our $\mathbb{Z}_p$-extension. 
Note that, in this case, the $p$-parts of class groups trivially make an inverse system. In our situation, the patching method construct a good module $M_\infty$ and the maps $R_\infty \twoheadrightarrow R$ and $M_\infty \twoheadrightarrow M$. 
In the patching method, we need a compatible system of patching data (as an analog of the system of $p$-parts of class groups), where we change the level via Taylor--Wiles primes, hence we need compatibility properties in the deformation problems attached to Taylor--Wiles primes. 
This is the reason why we had study Taylor--Wiles primes and the properties of corresponding deformation problems in detail. Recall that for the ramification, by definition we know adding Taylor--Wiles primes does not change the ramified primes in our deformation problem. 
Note that, this also is like the Iwasawa theoretic context, namely ramified primes are the same in the our $\mathbb{Z}_p$-tower (after a finite layer).
\end{remark}

Now, let us state (and prove!) \textit{a modularity lifting theorem in the minimal case}, using our $R=\T$ theorem (Theorem \ref{R=T minimal case}):

\begin{theorem} \label{th: minimal modularity lifting}
Let $p$ be an odd prime and $\rho:G_{\mathbb{Q}} \rightarrow GL_2(\overline{\mathbb{Q}}_p)$ be a continuous irreducible Galois representation satisfying the following conditions:
\begin{itemize}
\item[(1)] $\rho$ is unramified outside a finite set of primes.
\item[(2)] $\rho|_{G_{\mathbb{Q}_p}} \simeq \begin{bmatrix}
\chi_1 & *\\
0 & \chi_2
\end{bmatrix}
with \chi_1|_{I_p}=1$ and $\chi_2|_{I_p}=\varepsilon_p^{-1}$, where $\varepsilon_p$ is the $p$-adic cyclotomic character.
\item[(3)] $\overline{\rho}|_{G_{\mathbb{Q}(\zeta_p)}}$ is absolutely irreducible with enormous image.
\item[(4)] For all $\ell \neq p$ at which $\rho$ is ramified, we have either:
\begin{itemize}
\item[(i)] $\rho|_{I_\ell} \simeq 1\oplus \theta$ with $\theta(I_\ell) \simeq \overline{\theta}(I_\ell)$, or
\item[(ii)] $\rho|_{I_\ell}$ is isomorphic to the image of $\rho$ in the set of matrices of the form $\begin{bmatrix}
1 & *\\
0 & 1
\end{bmatrix}$ and $\overline{\rho}(I_\ell) \neq 1$;
\end{itemize}
and for $p$ we have:
\begin{itemize}
\item $\overline{\rho}|_{G_{\mathbb{Q}_p}} \simeq \begin{bmatrix}
\overline{\chi}_1 & *\\
0 & \overline{\chi}_2
\end{bmatrix}$ with $\overline{\chi}_1 \overline{\chi}_2 ^{-1} \neq 1, \overline{\varepsilon}$.
\end{itemize}
\item[(5)] $\overline{\rho} \simeq \overline{\rho}_g$ for some $g \in S_2(\Gamma_1(N),\overline{\mathbb{Q}}_P)$, with $N=\prod \ell$ where $\ell \neq p$ runs over all primes at which $\rho$ is ramified.
\end{itemize}
Then, $\rho \simeq \rho_f$ for some Hecke-eigenform $f \in S_2(\Gamma_1(N),\overline{\mathbb{Q}}_p)$.
\end{theorem}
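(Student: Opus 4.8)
\emph{Proof proposal.} The plan is to realize $\rho$ as a point of the universal deformation ring $R_{\mathcal{S}}$ of the minimal deformation problem $\mathcal{S}$ of this subsection, and then to invoke the isomorphism $R_{\mathcal{S}}\simeq\mathbb{T}^S(\Gamma)_{\mathfrak{m}}$ of Theorem \ref{R=T minimal case}. First I would fix a finite extension of $\mathbb{Q}_p$ with ring of integers $\mathcal{O}'$ and residue field $\mathbb{F}$, large enough that $\rho$ is conjugate into $GL_2(\mathcal{O}')$ (possible since its image is compact), that $\mathbb{F}$ contains the image of $\overline{\rho}$, and that $\mathbb{F}$ contains the eigenvalues of every $\overline{\rho}(\sigma)$; after a harmless adjustment of $g$ I may also assume its nebentypus $\eta$ has order prime to $p$. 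Set $N=\prod\ell$ over the primes $\ell\neq p$ at which $\rho$ ramifies, $S=\{\ell:\ell\mid N\}\cup\{p\}$, and let $\mathcal{S}=(\overline{\rho},S,\chi,\mathcal{O},\{D_v\}_{v\in S})$ be the minimal global deformation condition of this subsection with $\chi:=\det\rho$ (which, using (1) and (2), is unramified outside $S$ and reduces to $\det\overline{\rho}=\det\overline{\rho}_g$), $D_p=D_p^{ord}$ with inertial character $\delta=\chi|_{I_p}=\varepsilon_p^{-1}|_{I_p}$, and $D_\ell=D_\ell^{min}$ for $\ell\mid N$. I would then check that hypotheses (1)--(5) force the running assumptions (i)--(iv) of this subsection: $p>2$, $p\nmid N$ and the square-freeness of $N$ are immediate; (3) is assumption (ii); hypothesis (4) makes $\overline{\rho}$ ramified at every $\ell\mid N$ --- directly in its case (ii), and because $\overline{\theta}(I_\ell)\simeq\theta(I_\ell)\neq1$ in its case (i) --- so that $N$ is the Artin conductor of $\overline{\rho}$, giving (iii); and reducing (2) modulo $\mathfrak{m}_{\mathcal{O}'}$ gives $\overline{\chi}_1|_{I_p}=1$ and $\overline{\chi}_2|_{I_p}=\overline{\varepsilon}_p^{-1}$, the non-degeneracy $\overline{\chi}_1\overline{\chi}_2^{-1}\neq1,\overline{\varepsilon}$ being part of (4); that is assumption (iv). Finally, absolute irreducibility of $\overline{\rho}$ (a consequence of (3)) gives $\mathrm{End}_{\mathbb{F}[G_{\mathbb{Q},S}]}(\overline{\rho})=\mathbb{F}$, so $D_{\mathcal{S}}$ is representable by Proposition \ref{prop: D_S is representable}.

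Next I would verify that $\rho$, viewed as a lift of $\overline{\rho}$ to $\mathcal{O}'$, is of type $\mathcal{S}$ in the sense of \S\ref{subsub: global deformation}: it is unramified outside $S$ by (1); $\det\rho=\chi$ by construction; $\rho|_{G_{\mathbb{Q}_p}}\in D_p^{ord}(\mathcal{O}')$ by (2); and for each $\ell\mid N$, $\rho|_{G_{\mathbb{Q}_\ell}}\in D_\ell^{min}(\mathcal{O}')$. The last point is exactly the dichotomy in hypothesis (4): case (i) places $\rho|_{I_\ell}$ in the ``$\chi_1\oplus\chi_2$ with $\chi_1|_{I_\ell}=1$ and $\chi_2|_{I_\ell}$ the Teichm\"uller lift of $\overline{\chi}_2$'' description, and case (ii) places it in the unipotent-inertia description, so that in either case the Artin conductor of $\rho$ at $\ell$ equals that of $\overline{\rho}$ --- there is no level raising. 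Hence $\rho$ determines a point of $D_{\mathcal{S}}(\mathcal{O}')$, that is (regarding $\mathcal{O}'$ as an object of $\textbf{CNL}_{\mathcal{O}}$) an $\mathcal{O}$-algebra homomorphism $x\colon R_{\mathcal{S}}\to\mathcal{O}'\hookrightarrow\overline{\mathbb{Q}}_p$.

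Now I apply Theorems \ref{th: R to T is surjective} and \ref{R=T minimal case}. For the non-Eisenstein maximal ideal $\mathfrak{m}\subseteq\mathbb{T}^S$ attached to $\overline{\rho}\simeq\overline{\rho}_g$, the surjection $R_{\mathcal{S}}\twoheadrightarrow\mathbb{T}^S(\Gamma)_{\mathfrak{m}}$ is an isomorphism, so $x$ factors as $R_{\mathcal{S}}\xrightarrow{\sim}\mathbb{T}^S(\Gamma)_{\mathfrak{m}}\xrightarrow{\lambda}\overline{\mathbb{Q}}_p$ for a ring homomorphism $\lambda$. This $\lambda$ is an eigensystem occurring in $H^1(\Gamma,\overline{\mathbb{Q}}_p)$, and since $\mathfrak{m}$ is non-Eisenstein, Eichler--Shimura (Theorem \ref{th: Eichler--Shimura}) identifies it with the eigensystem of a Hecke eigenform $f\in S_2(\Gamma,\overline{\mathbb{Q}}_p)\subseteq S_2(\Gamma_1(N),\overline{\mathbb{Q}}_p)$, whose Galois representation $\rho_f$ is the pushforward of $\rho_{\mathfrak{m}}$ along $\lambda$. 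By the construction of $x$ and $\lambda$, for every prime $\ell\notin S$ we have $\tr\rho(\mathrm{Frob}_\ell)=\lambda(T_\ell)=\tr\rho_f(\mathrm{Frob}_\ell)$; since $\overline{\rho}$ is absolutely irreducible, Corollary \ref{cor: chebotarev and residually irreducible representation} gives $\rho\simeq\rho_f$, which is the assertion.

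The only step that calls for genuine work, rather than quoting earlier results, is the second one: checking that $\rho$ satisfies the local conditions packaged in $\mathcal{S}$. At $p$ one must confirm that $\rho|_{G_{\mathbb{Q}_p}}$ is ordinary in the precise sense of $D_p^{ord}$, with the prescribed inertial character $\delta$; the hypothesis $\overline{\chi}_1\overline{\chi}_2^{-1}\neq1,\overline{\varepsilon}$ is what is needed here to rigidify the $G_{\mathbb{Q}_p}$-stable line (a Carayol-type argument) and to rule out the degenerate shapes. At each $\ell\mid N$ one must verify minimal ramification, which is where the case split of (4) and the equality of Artin conductors $N(\rho)=N(\overline{\rho})$ enter. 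Once these local verifications are in place, the theorem follows formally from $R_{\mathcal{S}}\simeq\mathbb{T}^S(\Gamma)_{\mathfrak{m}}$ together with the Chebotarev density theorem.
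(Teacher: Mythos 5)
Your proposal is correct and follows the same route as the paper: realize $\rho$ as an $\mathcal{O}$-algebra homomorphism $R_{\mathcal{S}}\to\overline{\mathbb{Q}}_p$ for the minimal deformation condition $\mathcal{S}$, invoke the isomorphism $R_{\mathcal{S}}\simeq\mathbb{T}^S(\Gamma)_{\mathfrak{m}}$ of Theorem \ref{R=T minimal case} to produce an eigensystem $\lambda$ and hence a Hecke-eigenform $f$, and match $\rho$ with $\rho_f$ via traces of Frobenius and Corollary \ref{cor: chebotarev and residually irreducible representation}. In fact your write-up is substantially more detailed than the paper's own two-sentence sketch, in particular in verifying the running hypotheses (i)--(iv) and the local type-$\mathcal{S}$ conditions at $p$ and at $\ell\mid N$, which the paper leaves implicit.
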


In fact, from the assumptions of the Theorem, we can find an $\mathcal{O}$-algebra homomorphism $R_{\mathcal{S}} \rightarrow \overline{\mathbb{Q}}_p$ with $\mathcal{S}$ as in this section. 
Now, $R_{\mathcal{S}} \simeq \T^S(\Gamma)_{\mathfrak{m}}$ (Theorem \ref{R=T minimal case}), for $S=\{\ell: \ell|N\} \cup \{p\}$, implies that there exists an $\mathcal{O}$-algebra homomorphism $\lambda: \T^S(\Gamma)_{\mathfrak{m}} \rightarrow \overline{\mathbb{Q}}_p$ which 
is an eigensystem of (and hence, equivalent to) some Hecke-eigenform $f \in S_2(\Gamma_1(N),\overline{\mathbb{Q}}_p)$ (since the characteristic polynomial of $\rho(\Frob{\ell})$ is given by $X^2-\lambda(T_\ell)X+\ell \lambda(S_\ell)$).

\subsection{Non-minimal Case}

Even though we are happy to have proved a modularity lifting theorem in the minimal case, it is not enough to deduce the \textit{Shimura--Taniyama--Weil (STW) Conjecture}, even in the semistable case. For deducing \textit{STW} in 
the semistable case, we need a \textit{non-minimal modularity lifting theorem}, which itself follows from an $R^{\red}=\T$ theorem. In fact, the fourth condition in the previous modularity lifting theorem (Theorem \ref{th: minimal modularity lifting}) is restrictive. There are (at least) two ways to get rid of it:
\begin{itemize}
\item[(i)] Wiles' method \cite{Wiles}: numerical criterion. Note that it is hard to generalize it. 
\item[(ii)] Kisin's method \cite{Kisin}: presenting global deformation rings as algebras over local lifting rings.
\end{itemize}
We will try to give a sketch of Kisin's method.
\\

Let us continue to assume that $\overline{\rho}$ is modular, i.e. $\overline{\rho}=\overline{\rho}_g$ for some $g \in S_2(\Gamma_1(N),\overline{\mathbb{Q}}_p)$ and $\overline{\rho}|_{G_{\mathbb{Q}(\zeta_p)}}$ is 
absolutely irreducible with enormous image, but let us drop the \textit{minimality hypothesis}, so maybe the level of $\Gamma=\Gamma_1(N)$ (which is equal to $N$) be non-square-free and lifts of $\overline{\rho}$ ramified 
at some primes for which $\overline{\rho}$ itself is unramified. So, we can make a global deformation condition $\mathcal{S}=(\overline{\rho},S,\chi,\mathcal{O},\{D_v\}_{v \in S})$,
 $D_v \in D^{\square,\chi}_{\overline{\rho}|_{G_{\mathbb{Q}_v}}}$ such that we can 
prove $\rho_{\mathfrak{m}}: G_{\mathbb{Q}} \rightarrow GL_2(\T^S(\Gamma)_{\mathfrak{m}})$ is of type $\mathcal{S}$ and we expect all deformations of $\overline{\rho}$ of type $\mathcal{S}$ come 
from $\T^S(\Gamma)_{\mathfrak{m}}$. Note that also we assume for any $v \in S$, the ring $R_v$ which represents $D_v$ is $\mathcal{O}$-flat. Furthemore, we have $\dim R_v=
\begin{cases}
4 & v \neq p \\
5 & v=p
\end{cases}$.

We consider frames at $T=S$ and put $|S|=s$. Let $R_{\mathcal{S}}^{loc}:= \hat{\tensor{\mathcal{O}, v \in S}} R_v$ is $\mathcal{O}$-flat of dimension $2+3s$. 
Also, recall that $R^S_{\mathcal{S}_Q} \simeq R_{\mathcal{S}_Q} \hat{\tensor{\mathcal{O}}} \Omega$ where $\Omega=\mathcal{O}\llbracket Z_1, \cdots, Z_{4s-1}\rrbracket$ (look at the explanation just before the Diagram \eqref{eq: non-min}). Then, we have the following important proposition:

\begin{proposition} \label{prop: M_infty in non-minimal case}
There is a non-negative integer $q$, the $\textbf{CNL}_{\mathcal{O}}$-algebra $R_\infty:=R_{\mathcal{S}}^{loc}\llbracket X_1, \cdots , X_g\rrbracket$ and a finitely generated $R_\infty$-module $M_\infty$ such that the following diagram is commutative and satisfying the following properties ($S_\infty:=\Omega\llbracket Y_1, \cdots , Y_q\rrbracket$):
\begin{equation} \label{eq: minimal case}
\begin{tikzcd}
S_\infty \arrow[r, ]
& R_\infty \arrow[r, bend left] \arrow[d, twoheadrightarrow]
& M_\infty \arrow[d,twoheadrightarrow ] \\
&
R:=R_{\mathcal{S}} \arrow[r, bend left]
& M:=H_1(Y,\mathcal{O})_{\mathfrak{m}}
\end{tikzcd}
\end{equation}

\begin{itemize}
\item[(1)] The $R_\infty$-module $M_\infty$ is a finite free $S_\infty$-module.
\item[(2)] We have the surjections $R_\infty \twoheadrightarrow R$ and $M_\infty \twoheadrightarrow M$ such that kernel of the first map is contained in $\mathfrak{a}R_\infty$ and kernel of the second map is equal to $\mathfrak{a}M_\infty$, where $\mathfrak{a}:=\langle Z_1 , \cdots ,Z_{4s-1}, Y_1, \cdots , Y_q \rangle \subseteq S_\infty$ is the augmentation ideal.
\item[(3)] We have $\dim S_\infty=\dim R_\infty$, i.e. $4s+q=g+2+3s$ which means $s+q=g+2$.
\end{itemize}
\end{proposition}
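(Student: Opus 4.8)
The plan is to run the Taylor--Wiles--Kisin patching argument exactly as in the minimal case (Proposition~\ref{prop: there exist R_infty and M_infty} and the paragraphs following it), the only new feature being that the $4s-1$ framing variables $\Omega=\mathcal{O}\llbracket Z_1,\cdots,Z_{4s-1}\rrbracket$ are carried along so that the local lifting ring $R_{\mathcal{S}}^{loc}$ genuinely enters as a subring of $R_\infty$. Concretely, set $q:=h^1_{\mathcal{S}^\perp,S}(ad^0(\overline{\rho})(1))$ and, for every positive integer $N$, use Proposition~\ref{prop: existence of TW primes Q_N} to choose a Taylor--Wiles datum $(Q_N,\{\alpha_v\}_{v\in Q_N})$ of level $N$ with $|Q_N|=q$ and $h^1_{\mathcal{S}_{Q_N}^\perp,S}(ad^0(\overline{\rho})(1))=0$ (here $T=S$). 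Corollary~\ref{cor: corollary of existence of TW primes Q_N} then supplies a surjection $R_\infty:=R_{\mathcal{S}}^{loc}\llbracket X_1,\cdots,X_g\rrbracket\twoheadrightarrow R^S_{\mathcal{S}_{Q_N}}$, and composing with $R^S_{\mathcal{S}_{Q_N}}\twoheadrightarrow R_{\mathcal{S}_{Q_N}}\twoheadrightarrow R$ (the last map has kernel $\mathfrak{a}_{Q_N}$) gives the candidate maps $f_N:R_\infty\to R/\mathfrak{d}_N$, with $\mathfrak{d}_N$ the analogue of the ideal in Definition~\ref{def: patching datum}.

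On the automorphic side I would build, for each $N$, an $S$-framed Hecke module $M_{Q_N}$: the localized homology $H_1(Y_{Q_N},\mathcal{O})_{\mathfrak{m}_{Q_N}}$ furnished via the representation $\rho_{Q_N}$ of Theorem~\ref{th: existence of rho_Q} with a compatible action of $R^S_{\mathcal{S}_{Q_N}}$ (using Theorem~\ref{th: R to T is surjective} and the comparison $R^S_{\mathcal{S}_{Q}}\simeq R_{\mathcal{S}_Q}\,\hat{\otimes}_{\mathcal{O}}\,\Omega$ of Proposition~\ref{prop: D_S,T is representable}), finite free over $\mathcal{O}[\Delta_{Q_N}]\,\hat{\otimes}_{\mathcal{O}}\,\Omega$ by the framed version of Proposition~\ref{prop: second iso of homologies}, and with $\Delta_{Q_N}$-coinvariants recovering $M=H_1(Y,\mathcal{O})_{\mathfrak{m}}$ by Corollary~\ref{cor: third iso of homologies}. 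With $S_\infty=\Omega\llbracket Y_1,\cdots,Y_q\rrbracket\simeq\Omega\llbracket\mathbb{Z}_p^q\rrbracket$, $\mathfrak{a}=\langle Z_1,\cdots,Z_{4s-1},Y_1,\cdots,Y_q\rangle$, and the analogues $\mathfrak{a}_N$, $S_N=S_\infty/\langle\varpi^N,\mathfrak{a}_N\rangle$ of Definition~\ref{def: patching datum}, I define a patching datum of level $N$ as there and set $D_N:=(f_N,X_N,g_N)$ with $X_N:=M_{Q_N}\otimes_{S_\infty}S_N$ and $g_N$ induced by Corollary~\ref{cor: third iso of homologies}. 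Since there are only finitely many isomorphism classes of patching data of any fixed level $N$, a pigeonhole/diagonal argument produces a subsequence $(M_i,N_i)$ with $N_{i+1}>N_i$ and $D_{M_{i+1}}\bmod N_i\simeq D_{M_i}\bmod N_i$, and then $M_\infty:=\varprojlim_i X_{M_i}$, the surjection $R_\infty\twoheadrightarrow R$ as $\varprojlim_i f_{M_i,N_i}$, and $M_\infty\twoheadrightarrow M$ as $\varprojlim_i g_{M_i,N_i}$.

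It then remains to verify (1)--(3). For (1): each $X_{M_i}$ is finite free over $S_{N_i}$ of rank equal to $\operatorname{rank}_{\mathcal{O}}M$ (independent of $i$, since $M_{Q_N}$ is free over $\mathcal{O}[\Delta_{Q_N}]\,\hat{\otimes}_{\mathcal{O}}\,\Omega$ of that rank by Proposition~\ref{prop: second iso of homologies} and Corollary~\ref{cor: third iso of homologies}), and a compatible system of finite free modules of fixed rank over the $S_N$ with surjective transition maps has finite free limit over $S_\infty=\varprojlim_N S_N$; the $R_\infty$-action on $M_\infty$ is the limit of the $R_\infty$-actions on the $X_{M_i}$. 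For (2): passing to the limit in $R^S_{\mathcal{S}_{Q_N}}/\mathfrak{a}_\infty\simeq R^S_{\mathcal{S}}$ (so the image of $\mathfrak{a}$ lands in $\ker(R_\infty\to R)$) and in $H_0(\Delta_{Q_N},M_{Q_N})\simeq M$ gives $\ker(R_\infty\twoheadrightarrow R)\subseteq\mathfrak{a}R_\infty$ and $\ker(M_\infty\twoheadrightarrow M)=\mathfrak{a}M_\infty$, exactly as in the minimal case. For (3): $\dim S_\infty=\dim\Omega+q=4s+q$ while $\dim R_\infty=\dim R_{\mathcal{S}}^{loc}+g=(2+3s)+g$, and Corollary~\ref{cor: corollary of existence of TW primes Q_N}(ii) with $t=s$ and $R_{\mathcal{S}}^{T-loc}=R_{\mathcal{S}}^{loc}$, namely $2+3s+g=q+4s$, yields $s+q=g+2$ and hence $\dim S_\infty=\dim R_\infty$.

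The hard part is the framed bookkeeping: one must equip $M_{Q_N}$ with an honest $R^S_{\mathcal{S}_{Q_N}}$-module structure — not merely a formal $\Omega$-action by $4s-1$ dummy variables — so that the local lifting rings $R_v$ act on $M_\infty$ through $R_\infty=R_{\mathcal{S}}^{loc}\llbracket X_1,\cdots,X_g\rrbracket$, and one must check that conditions (2)(i)--(ii) of Definition~\ref{def: patching datum} are stable under the ``$\bmod\,N$'' operation and survive the inverse limit, together with the finiteness of the set of isomorphism classes of patching data of a fixed level. Once these compatibilities are arranged (they are precisely why the deformation theory at Taylor--Wiles primes was developed in Section~\ref{sec:2}), the rest is a transcription of the minimal-case argument.
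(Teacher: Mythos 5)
Your proposal is correct and follows essentially the same route as the paper, which itself only remarks that ``the patching datum is defined similar to the previous case'': you transcribe the minimal-case construction of Proposition~\ref{prop: there exist R_infty and M_infty} with the framing variables $\Omega$ carried along, use Corollary~\ref{cor: corollary of existence of TW primes Q_N} with $T=S$ for the surjections and the numerology in (3), and run the same pigeonhole/inverse-limit argument. The points you flag as requiring care (the honest $R^S_{\mathcal{S}_{Q_N}}$-action on the framed Hecke modules and the stability of the patching-datum conditions under reduction of level) are exactly the bookkeeping the paper elides.
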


Note that in the above proposition, the patching datum is defined similar to the previous case.

\begin{proposition} \label{prop: R^red=T}
If $\Supp_{R_\infty}(M_\infty)=\Spec(R_\infty)$, then $\Supp_R(M)=\Spec(R)$ and the surjective map $R \twoheadrightarrow \T^S(\Gamma)_{\mathfrak{m}}$ has nilpotent kernel, 
hence the map $R^{\red} \rightarrow \T^S(\Gamma)_{\mathfrak{m}}$ is an isomorphism.
\end{proposition}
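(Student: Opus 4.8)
The plan is to show that the Hecke algebra $\mathbb{T}:=\mathbb{T}^S(\Gamma)_{\mathfrak{m}}$ acts almost faithfully on $M$, and then to transport the full-support hypothesis on $M_\infty$ down to $M$ through the Taylor--Wiles variables. Two properties of $\mathbb{T}$ frame everything. By Proposition~\ref{prop: result of being non-Eisenstein} it is $\mathcal{O}$-flat, so it embeds into $\mathbb{T}\tensor{\mathcal{O}}\overline{\mathbb{Q}}_p\simeq\prod\overline{\mathbb{Q}}_p$ (product over the eigensystems above $\mathfrak{m}$), hence $\mathbb{T}$ is reduced; and by construction $\mathbb{T}=\im(\mathbb{T}^S\to\End_{\mathcal{O}}(M))$, so $\mathbb{T}$ acts faithfully on $M$. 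Thus $\Ann_R(M)=\ker(R\twoheadrightarrow\mathbb{T})$, and it is enough to prove $\Supp_R(M)=\Spec R$, equivalently (as $R$ is Noetherian) that $\Ann_R(M)$ is nilpotent: granting this, $\ker(R\twoheadrightarrow\mathbb{T})$ lies in the nilradical $\mathfrak{n}_R$, while $\mathfrak{n}_R\subseteq\ker(R\twoheadrightarrow\mathbb{T})$ since $\mathbb{T}$ is reduced, so $\mathfrak{n}_R=\ker(R\twoheadrightarrow\mathbb{T})$ and $R^{\red}=R/\mathfrak{n}_R\simeq\mathbb{T}$.

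The engine is the standard Cohen--Macaulay count. Since $M_\infty\neq 0$ is finite free over the regular local ring $S_\infty$, we have $\depth_{S_\infty}M_\infty=\dim S_\infty$; as the $S_\infty$-module structure on $M_\infty$ factors through the local map $S_\infty\to R_\infty$ and $M_\infty$ is a finite $R_\infty$-module, a maximal $M_\infty$-regular sequence in $\mathfrak{m}_{S_\infty}$ maps to an $M_\infty$-regular sequence in $\mathfrak{m}_{R_\infty}$ (regularity only sees the action on $M_\infty$), whence
\[
\depth_{R_\infty}M_\infty\ \ge\ \dim S_\infty\ =\ \dim R_\infty\ \ge\ \dim_{R_\infty}M_\infty\ \ge\ \depth_{R_\infty}M_\infty,
\]
so $M_\infty$ is a maximal Cohen--Macaulay $R_\infty$-module of full dimension. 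The generators $Z_1,\dots,Z_{4s-1},Y_1,\dots,Y_q$ of the augmentation ideal $\mathfrak{a}\subseteq S_\infty$ form part of a regular system of parameters of $S_\infty$, hence an $M_\infty$-regular sequence, and $M=M_\infty/\mathfrak{a}M_\infty$; cutting the maximal Cohen--Macaulay module $M_\infty$ by it exhibits $M$ as a Cohen--Macaulay module over $R_\infty$ (equivalently over $R$) of depth and dimension $1$, consistent with $\dim\mathbb{T}=1$.

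It remains to compute the support. By $\Supp(N/\mathfrak{b}N)=\Supp(N)\cap V(\mathfrak{b})$ and the hypothesis,
\[
\Supp_{R_\infty}(M)=\Supp_{R_\infty}(M_\infty)\cap V(\mathfrak{a}R_\infty)=\Spec(R_\infty)\cap V(\mathfrak{a}R_\infty)=V(\mathfrak{a}R_\infty).
\]
The key point from the construction of the augmented deformation problems is that $R_{\mathcal{S}_Q}/\mathfrak{a}_QR_{\mathcal{S}_Q}\simeq R_{\mathcal{S}}=R$, i.e.\ the Taylor--Wiles (and auxiliary framing) variables die in $R$; hence $\mathfrak{a}R_\infty\subseteq\ker(R_\infty\twoheadrightarrow R)$, and combined with the reverse containment $\ker(R_\infty\twoheadrightarrow R)\subseteq\mathfrak{a}R_\infty$ from Proposition~\ref{prop: M_infty in non-minimal case} this gives $R=R_\infty/\mathfrak{a}R_\infty$. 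Since $M$ is an $R$-module, $\Supp_R(M)$ is identified with $\Supp_{R_\infty}(M)=V(\mathfrak{a}R_\infty)=\Spec R$; feeding this back into the first step finishes the proof.

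The step I expect to be delicate is exactly this descent of full support. Internally, one must know that $\ker(R_\infty\twoheadrightarrow R)$ and $\mathfrak{a}R_\infty$ carve out the same closed subscheme (at least have the same radical): if the kernel were strictly smaller, $\Supp_R(M)$ would lie in a proper closed subset of $\Spec R$ and the statement would be false, so it is essential that the Taylor--Wiles variables are killed \emph{exactly}. And one level up, the real weight of the non-minimal case is the hypothesis $\Supp_{R_\infty}(M_\infty)=\Spec(R_\infty)$ itself: because $R_\infty=R_{\mathcal{S}}^{loc}\llbracket X_1,\dots,X_g\rrbracket$ need not be regular, Auslander--Buchsbaum is unavailable (the maximal Cohen--Macaulay property of $M_\infty$ is its replacement), and showing that $M_\infty$ meets every irreducible component of $\Spec R_\infty$ requires controlling the geometry --- $\mathcal{O}$-flatness and equidimensionality --- of the local lifting rings $R_v$, hence of $R_{\mathcal{S}}^{loc}$.
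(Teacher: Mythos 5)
Your argument is correct, and it is essentially the intended one: the paper states this proposition without proof, but your reasoning is the direct non-minimal analogue of the paper's own proof of Theorem~\ref{R=T minimal case} (faithfulness and reducedness of $\T^S(\Gamma)_{\mathfrak{m}}$ acting on $M$, plus the identification $R\simeq R_\infty/\mathfrak{a}R_\infty$ coming from the patching construction, which the paper also uses implicitly in the minimal case). The one remark worth making is that your Cohen--Macaulay paragraph, while correct, is not needed for the implication as stated --- the support computation $\Supp_{R_\infty}(M)=\Supp_{R_\infty}(M_\infty)\cap V(\mathfrak{a}R_\infty)=V(\mathfrak{a}R_\infty)$ requires only that $M=M_\infty/\mathfrak{a}M_\infty$ is finitely generated, and the maximal Cohen--Macaulay property of $M_\infty$ is really an ingredient for establishing the hypothesis $\Supp_{R_\infty}(M_\infty)=\Spec(R_\infty)$ (via equidimensionality of $R_\infty$ and Ihara avoidance), as you correctly observe at the end.
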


Note that, $R^{\red} \simeq \T^S(\Gamma)_{\mathfrak{m}}$ is good enough for our modularity lifting purposes. So, the problem is to show that $M_\infty$ has full support in $\Spec(R_\infty)$. There are (at least) two ways to do this:
\begin{itemize}
\item[(i)] By using \textit{Ihara's Lemma} \cite{Wiles}; or
\item[(ii)] By using \textit{Taylor's Ihara avoidance} trick \cite{Taylor1}.
\end{itemize}

We do not go into this. We end the chapter by stating a \textit{non-minimal modularity lifting} result, which follows from our $R^{\red}=\T$ (Proposition \ref{prop: R^red=T}); and some remarks. 

\begin{theorem}
Let $p \geq 5$ be a prime and let $\rho:G_{\mathbb{Q}} \rightarrow GL_2(\overline{\mathbb{Q}}_p)$ be a continuous irreducible Galois representation satisfying the following:
\begin{itemize}
\item[(1)] $\rho$ is unramified outside a finite set of primes.
\item[(2)] $\rho|_{G_{\mathbb{Q}_p}}$ satisfies some $p$-adic Hodge theoretic conditions.
\item[(3)] $\overline{\rho}|_{G_{\mathbb{Q}(\zeta_p)}}$ is absolutely irreducible with enormous image.
\item[(4)] $\overline{\rho} \simeq \overline{\rho}_g$ for a modular form $g \in S_2(\Gamma_1(N),\overline{\mathbb{Q}}_p)$ with $p \nmid N$.
\end{itemize}
Then, $\rho \simeq \rho_f$ for a modular form $f \in S_2(\Gamma_1(N),\overline{\mathbb{Q}}_p)$.
\end{theorem}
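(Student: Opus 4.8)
The plan is to run the Taylor--Wiles--Kisin patching argument just as in the minimal case (Theorem~\ref{th: minimal modularity lifting}), but feeding it through the $R^{\red}=\T$ statement of Proposition~\ref{prop: R^red=T} rather than the full $R=\T$ statement. First I would fix a sufficiently large coefficient ring $\mathcal{O}$, with residue field $\mathbb{F}$ containing the image of $\overline{\rho}$ and all eigenvalues of $\overline{\rho}(\sigma)$, $\sigma\in G_{\mathbb{Q}}$, and large enough that $\rho$ is defined over $\mathcal{O}$. Let $S$ be the finite set consisting of $p$ together with every prime at which $\rho$ ramifies, put $\chi=\det\rho$, and for each $v\in S$ choose a local deformation condition $D_v\subseteq D^{\square,\chi}_{\overline{\rho}|_{G_{\mathbb{Q}_v}}}$ represented by an $\mathcal{O}$-flat ring $R_v$ of dimension $4$ for $v\neq p$ and $5$ for $v=p$, designed so that $\rho|_{G_{\mathbb{Q}_v}}\in D_v(\overline{\mathbb{Q}}_p)$: away from $p$ one takes the (minimally or non-minimally ramified, special, or ramified principal-series) condition dictated by $\rho|_{G_{\mathbb{Q}_\ell}}$, and at $p$ the potentially crystalline condition of Hodge--Tate weights $\{0,1\}$ dictated by hypothesis~(2), for which $p\geq 5$ ensures the relevant local lifting ring is $\mathcal{O}$-flat of the expected dimension. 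This gives a global deformation condition $\mathcal{S}=(\overline{\rho},S,\chi,\mathcal{O},\{D_v\}_{v\in S})$, and since $\rho$ itself is of type $\mathcal{S}$ it furnishes an $\mathcal{O}$-algebra homomorphism $R_{\mathcal{S}}\to\overline{\mathbb{Q}}_p$.

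By hypothesis~(4), $\overline{\rho}\simeq\overline{\rho}_g$ for $g\in S_2(\Gamma_1(N),\overline{\mathbb{Q}}_p)$ with $p\nmid N$; passing (after possibly raising the level by an auxiliary prime to make the relevant congruence subgroup $\Gamma$ torsion-free, and descending at the end) to the maximal ideal $\mathfrak{m}\subseteq\T^S(\Gamma)$ attached to $\overline{\rho}$, one checks using local-global compatibility at each $v\in S$ that $\rho_{\mathfrak{m}}:G_{\mathbb{Q},S}\to GL_2(\T^S(\Gamma)_{\mathfrak{m}})$ is of type $\mathcal{S}$, exactly as in Theorem~\ref{th: R to T is surjective}; this produces a surjection $R_{\mathcal{S}}\twoheadrightarrow\T^S(\Gamma)_{\mathfrak{m}}$. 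I would then invoke the patching output of Proposition~\ref{prop: M_infty in non-minimal case}: with $s=|S|$, a non-negative integer $q$, the regular local ring $R_\infty=R_{\mathcal{S}}^{loc}\llbracket X_1,\dots,X_g\rrbracket$ where $R_{\mathcal{S}}^{loc}=\hat{\tensor{\mathcal{O},v\in S}}R_v$ is $\mathcal{O}$-flat of dimension $2+3s$, and a finitely generated $R_\infty$-module $M_\infty$ that is finite free over $S_\infty=\Omega\llbracket Y_1,\dots,Y_q\rrbracket$ (with $\Omega=\mathcal{O}\llbracket Z_1,\dots,Z_{4s-1}\rrbracket$) and satisfies $\dim R_\infty=\dim S_\infty$, together with compatible surjections $R_\infty\twoheadrightarrow R_{\mathcal{S}}$ and $M_\infty\twoheadrightarrow M=H_1(Y,\mathcal{O})_{\mathfrak{m}}$.

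The heart of the argument, and the step I expect to be the main obstacle, is to prove that $M_\infty$ has \emph{full support} on $\Spec(R_\infty)$. Since $M_\infty$ is finite free over $S_\infty$ and $\dim R_\infty=\dim S_\infty$, a depth/dimension count as in Theorem~\ref{R=T minimal case} shows only that $\Supp_{R_\infty}(M_\infty)$ is a nonempty union of irreducible components of $\Spec(R_\infty)$; in the non-minimal case $R_{\mathcal{S}}^{loc}$ need not be a domain, because the local lifting rings $R_v$ at the newly ramified $\ell\neq p$ can have several irreducible components, so this does not yet give full support. Upgrading to all of $\Spec(R_\infty)$ requires one of two inputs: Ihara's lemma (Wiles' route), which forces the patched module to be faithful by a level-raising argument on the cohomology of modular curves; or Taylor's Ihara-avoidance trick, in which one patches two deformation problems simultaneously --- one with a unipotently-ramified and one with a ramified principal-series condition at the auxiliary primes --- whose local lifting rings have the same dimension and related component structure, so that full support in one situation can be transported across components to yield full support in the other. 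In both approaches the genuine work is a careful analysis of the special fibres and irreducible components of the local lifting rings $R_v$ at $\ell\neq p$.

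Granting full support, Proposition~\ref{prop: R^red=T} at once yields $\Supp_R(M)=\Spec(R)$, so the surjection $R=R_{\mathcal{S}}\twoheadrightarrow\T^S(\Gamma)_{\mathfrak{m}}$ has nilpotent kernel and induces an isomorphism $R^{\red}\simeq\T^S(\Gamma)_{\mathfrak{m}}$. To conclude, the homomorphism $R_{\mathcal{S}}\to\overline{\mathbb{Q}}_p$ coming from $\rho$ factors through $R^{\red}$, hence through $\T^S(\Gamma)_{\mathfrak{m}}$, so it is an eigensystem $\lambda:\T^S(\Gamma)_{\mathfrak{m}}\to\overline{\mathbb{Q}}_p$ attached to a Hecke eigenform $f\in S_2(\Gamma_1(N),\overline{\mathbb{Q}}_p)$. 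For every $\ell\notin S$ the characteristic polynomial of $\rho(\Frob{\ell})$ is $X^2-\lambda(T_\ell)X+\ell\lambda(S_\ell)$, which coincides with that of $\rho_f(\Frob{\ell})$; since $\overline{\rho}$ is absolutely irreducible, Corollary~\ref{cor: chebotarev and residually irreducible representation} gives $\rho\simeq\rho_f$, completing the proof.
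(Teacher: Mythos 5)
Your proposal is correct and follows essentially the same route as the paper, which derives this theorem from Proposition~\ref{prop: R^red=T} (the $R^{\red}=\T$ statement) via the non-minimal patching output of Proposition~\ref{prop: M_infty in non-minimal case}, and then concludes exactly as in the minimal case by factoring the map $R_{\mathcal{S}}\to\overline{\mathbb{Q}}_p$ through the Hecke algebra. You correctly identify the full-support statement for $M_\infty$ as the genuine unproven crux and name the same two remedies (Ihara's lemma, Taylor's Ihara avoidance) that the paper cites while explicitly declining to carry either out, so your sketch is at least as complete as the paper's own treatment.
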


\begin{remark}
Note that, in the above modularity lifting theorem, we make no assumption on the ramification of $\rho$ and on the level of $g$ at primes different from $p$.
\end{remark}

\begin{remark}
There is another method for patching, which is due to \textit{Peter Scholze} \cite{Scholze}, \cite{Taylor2}.
\end{remark}

\section*{Acknowledgment}
We would like to thank the anonymous referee for carefully reading the first draft and for their valuable comments and suggestions.

\bibliographystyle{amsplain}

\begin{thebibliography}{20}

\baselineskip=.5cm


\bibitem{Allen}
P. Allen, \textit{Modularity Lifting}, Course notes for \textit{Math 596} (2020), avalable at \href{https://patrick-allen.github.io/teaching/f20/f20-modularity-lifting.html}{\textit{here}}.

%\bibitem{CY} J. Coates, S-T. Yau, (eds.), \textit{Elliptic curves, modular forms and Fermat's last theorem.} Springer-Verlag, New York (1997), 582 pp.

\bibitem{CSS} G. Cornell, J. Silverman, G. Stevens (eds.), \textit{Modular forms and Fermat's last theorem.} International Press, Cambridge, MA (1997), 582 pp.

\bibitem{DDT}
H. Darmon, F. Diamond, R. Taylor, \textit{Fermat's Last Theorem}, Elliptic curves, modular forms and Fermat's last theorem, Second edition. International Press, Cambridge, MA (1997), 2--140.

\bibitem{DL}
B. de Smit, H.W. Lenstra, \textit{Explicit Construction of Universal Deformation Rings}, Chapter $9$ of \cite{CSS} 313--326.

\bibitem{Diamond} F. Diamond,, \textit{The Taylor-Wiles construction and multiplicity one}, Invent. Math. \textbf{128} (1997), no. 2, 379--391.

\bibitem{Gee}
T. Gee, \textit{Modularity Lifting Theorems}, Essential Number Theory, \textbf{1}, (2022), no. 1, 73-126.

\bibitem{Kisin} M. Kisin, \textit{Moduli of finite flat group schemes, and modularity}, Ann. of Math. (2) \textbf{170} (2009), no. 3, 1085--1180.

\bibitem{Mazur1}
B. Mazur, \textit{Deforming Galois representations}, Galois groups over $\mathbb{Q}$. (Berkeley, CA, 1987),
Math. Sci. Res. Inst. Publ. \textbf{16}, Springer, New York (1989), 385--437.

\bibitem{Mazur2}
B. Mazur, \textit{An Introduction to the Deformation Theory of 
Galois Representations}, Chapter $8$ of \cite{CSS} 243--311.

\bibitem{Ramakrishna} R. Ramakrishna, \textit{On a variation of Mazur's deformation functor}, Compositio Math. \textbf{87} (1993), no. 3, 269--286.

\bibitem{Schlessinger} M. Schlessinger, \textit{Functors of Artin rings}, Trans. Amer. Math. Soc. \textbf{130} (1968), 208--222.

\bibitem{Scholze} P. Scholze, \textit{On the $p$-adic cohomology of the Lubin-Tate tower (With an appendix by Michael Rapoport)}, Ann. Sci. Éc. Norm. Supér. (4) \textbf{51} (2018), no. 4, 811--863.

\bibitem{Shin}
S-W. Shin, \textit{Modularity Lifting Theorems}, Course Notes, avalable at \href{https://users.math.yale.edu/~rz289/Galois_reps.pdf}{\textit{here}}.

\bibitem{Taylor1}
R. Taylor, \textit{Automorphy for some $l$-adic lifts of automorphic mod $l$ Galois representations II} Publ. Math. Inst. Hautes Études Sci. \textbf{108} (2008), 183--239.

\bibitem{Taylor2}
R. Taylor, \textit{Automorpgy Lifting}, Course notes for \textit{Math 249A} (2018), avalable at \href{https://math.berkeley.edu/~fengt/249A_2018.pdf}{\textit{here}}.

\bibitem{TW} R. Taylor, A. Wiles, \textit{Ring-theoretic properties of certain Hecke algebras}, Ann. of Math. (2) \textbf{141} (1995), no. 3, 553--572.

\bibitem{Wiles} A. Wiles, \textit{Modular elliptic curves and Fermat's last theorem}, Ann. of Math. (2) \textbf{141} (1995), no. 3, 443--551.
\end{thebibliography}

\end{document}